\newtheorem{theorem}[]{Theorem}
\DeclareFontFamily{OMX}{yhex}{}
\DeclareFontShape{OMX}{yhex}{m}{n}{<->yhcmex10}{}
\DeclareSymbolFont{yhlargesymbols}{OMX}{yhex}{m}{n}
\DeclareMathAccent{\wideparen}{\mathord}{yhlargesymbols}{"F3}
\DeclareMathOperator{\arc}{arc}
\newcommand{\newreptheorem}[2]{\newtheorem*{rep@#1}{\rep@title}\newenvironment{rep#1}[1]{\def\rep@title{#2 \ref*{##1}}\begin{rep@#1}}{\end{rep@#1}}}
\newtheorem{proposition}{Proposition}[]
\newtheorem{lemma}{Lemma}[]
\newtheorem{conjecture}{Conjecture}[]
\newtheorem{corollary}{Corollary}[]
\theoremstyle{definition}
\newtheorem{definition}{Definition}[section]
\newtheorem{remark}{Remark}[]
\definecolor{fuchsia}{RGB}{240,50,180}
\definecolor{spring}{RGB}{180,220,150}
\definecolor{lavender}{RGB}{200,140,220}
\definecolor{raddish}{RGB}{250,120,120}
\definecolor{tangerine}{RGB}{250,190,90}
\numberwithin{equation}{section}
\newcommand\polar[2]{({#1*cos(#2)},{#1*sin(#2)})}
\newcommand\numeq[1]
\newcommand\numstrict[1]
\newcommand\numineq[1]
  \author{Emily Casey}
\address{Emily Casey \newline
Department of Mathematics, University of Washington \newline
C138 Padelford Hall Box 354350, Seattle, WA 98195, USA}
\email{\href{mailto:ecasey4@uw.edu}{ecasey4@uw.edu}}
\begin{document}
	
	\title{Quantitative control on the Carleson $\varepsilon$-function determines regularity}
	\maketitle
 \begin{abstract}
     Carleson's $\varepsilon^2$-conjecture states that for Jordan domains in $\mathbb{R}^2$, points on the boundary where tangents exist can be characterized in terms of the behavior of the $\varepsilon$-function. This conjecture, which was fully resolved by Jaye, Tolsa, and Villa in 2021, established that qualitative control on the rate of decay of the Carleson $\varepsilon$-function implies the existence of tangents, up to a set of measure zero. We prove that quantitative control on the rate of decay of this function gives quantitative information on the regularity of the boundary.  
 \end{abstract}

	\section{Introduction}  
        In this paper we continue the study, which began in \cite{Bi87, BCGJ89, JTV21}, of the relationship between the boundary regularity of Jordan domains and the rate of decay of the Carleson $\varepsilon$-function. Geometric functions and their relationship to the regularity of sets has been the focus of many works over the past four decades, beginning in \cite{J90, DS91}. In particular, a geometric function measuring how well the boundary of a domain cuts circles in half appeared in \cite{Bi87}. This function is known as the Carleson $\varepsilon$-function. 
        \begin{definition}[Carleson $\varepsilon$-function]\label{epsilon}
  Let $\Omega^+\subset \mathbb{R}^2$ be  Jordan domain, i.e. a domain whose boundary is a Jordan curve, and let 
$\Omega^-=\mathbb{R}^2\setminus \overline{\Omega^+}$.
			Let $I^{\pm}(x,r)$ denote the largest open arc (with respect to $\mathcal{H}^1$-measure) on the circumference $\partial B(x,r)$ contained in $\Omega^{\pm}$. Define 
			\begin{equation*}
				\varepsilon(x,r)=\frac{1}{r}\max\left\{|\pi r-\mathcal{H}^1(I^+(x,r))|,|\pi r-\mathcal{H}^1(I^-(x,r))| \right\}.
			\end{equation*}
		\end{definition}
  \begin{figure}[h]
			\centering 
			\begin{subfigure}[]{0.37\textwidth}
				\centering
    \begin{tikzpicture}[scale=0.9]
     \draw (-1,3.9) -- (1.1,3.9) -- (1.1,2.7)--(-1,2.7)--(-1,3.9);
	\filldraw[lavender] (-0.9,3.8) -- (-0.5,3.8) -- (-0.5,3.4)--(-0.9,3.4)--(-0.9,3.8);
	\node at (0.3,3.6) {$I^+(x,r)$};
	\filldraw[spring] (-0.9,3.2) -- (-0.5,3.2) -- (-0.5,2.8)--(-0.9,2.8)--(-0.9,3.2);
	\node at (0.3,3) {$I^-(x,r)$};
    
	\node at (2.1,1.3) {\Large $\Omega^+$};
	\coordinate (A) at (0,.7); 
	\def\a{90}; 
	\coordinate (B) at (.9,1.7);
	\def\b{20};
	\coordinate (C) at (1.7,2.5);
	\def\c{100};
	\coordinate (D) at (2.1,2.7);
	\def\d{-10};
	\coordinate (E) at (2.3,2.8);
	\def\e{60};
	\coordinate (F) at (3,2.9);
	\def\f{-20};
	\coordinate (G) at (3.2,2.7);
	\def\g{-50};
	\coordinate (H) at (3.3,2.5);
	\def\h{-100};
	\coordinate (I) at (3.2,2);
	\def\i{-60};
	\coordinate (J) at (3.7,1.7);
	\def\j{10};
	\coordinate (K) at (4.7,1.8);
	\def\k{-60};
	\coordinate (L) at (3.5,1);
	\def\l{-180};
	\coordinate (M) at (3,.5);
	\def\m{-100};
	\coordinate (N) at (1.5,.2);
	\def\n{120};
	\coordinate (O) at (1,.3);
	\def\o{-140};
	
	\coordinate (X) at (1.355,1.905);
	\coordinate (Y) at (.45,1.48);
	\draw[line width=5pt, spring] (X) arc (25:205:.5);
	\draw[line width=5pt, lavender] (X) arc (25:-155:.5);
	
	\draw[thick] (B) circle (.5);
	\draw (X) circle (1.5pt);
	\draw (Y) circle (1.5pt);
	
	\draw[very thick] 
	(A) to [out=\a,in={\b-180}]
	(B) to [out=\b,in={\c-180}]
	(C) to [out=\c,in={\d-180}]
	(D) to [out=\d,in={\e-180}] 
	(E) to [out=\e,in={\f-180}] 
	(F) to [out=\f,in={\g-180}] 
	(G) to [out=\g,in={\h-180}] 
	(H) to [out=\h,in={\i-180}] 
	(I) to [out=\i,in={\j-180}] 
	(J) to [out=\j,in={\k-180}] 
	(K) to [out=\k,in={\l-180}] 
	(L) to [out=\l,in={\m-180}] 
	(M) to [out=\m,in={\n-180}] 
	(N) to [out=\n,in={\o-180}] 
	(O) to [out=\o,in={\a-180}] (A);
	
	\filldraw[fuchsia] (B) circle (1.5pt);
	\node[below=.5] at (B) {$x$};
	\end{tikzpicture}
				\begin{minipage}{.1cm}
					\vfill
				\end{minipage}\quad 
				 \subcaption{$\varepsilon(x,r)\approx 0$}\label{fig:1a}
			\end{subfigure} \qquad 
			\begin{subfigure}[]{0.37\textwidth}
				\centering
			    \begin{tikzpicture}[>=latex,scale=1]
	\node at (2.6,2.6) {\Large $\Omega^+$};
	\coordinate (A) at (0,.7); 
	\def\aa{-10};
	\def\ab{-20};
	\coordinate (B) at (1.4,2.7);
	\def\b{10};
	\coordinate (C) at (2,2.65);
	\def\c{10};
	\coordinate (D) at (2.8,3.2);
	\def\d{20};
	\coordinate (E) at (4.2,3.2);
	\def\e{-60};
	\coordinate (F) at (3.7,2.7);
	\def\f{170};
	\coordinate (G) at (3,2.6);
	\def\g{-100};
	\coordinate (H) at (3,1.95);
	\def\h{-60};
	\coordinate (I) at (3.2,1);
	\def\i{-90};
	\coordinate (J) at (2.4,0);
	\def\ja{10};
	\def\jb{30};
	\coordinate (K) at (2.9,1);
	\def\k{90};
	\coordinate (L) at (2.7,2);
	\def\la{-100};
	\def\lb{-110};
	\coordinate (M) at (2.6,1);
	\def\m{-60};
	\coordinate (N) at (2.2,-.1);
	\def\na{80};
	\def\nb{90};
	\coordinate (O) at (2.4,1);
	\def\o{120};
	\coordinate (P) at (2.4,2.05);
	\def\pa{-100};
	\def\pb{-140};
	\coordinate (Q) at (2,1.3);
	\def\q{-80};
	\coordinate (R) at (2,0);
	\def\ra{120};
	\def\rb{150};
	\coordinate (S) at (1.6,.5);
	\def\s{100};
	\coordinate (Sb) at (1.7,1.2);
	\def\sb{100};
	\coordinate (T) at (2.1,2.3);
	\def\ta{-170};
	\def\tb{-175};
	\coordinate (Ua) at (1.2,2.2);
	\def\ua{-130};
	\coordinate (Ub) at (.9,1.5);
	\def\ub{-110};
	\coordinate (X) at (1.78,.17);
	\coordinate (Ya) at (1.91,.3);
	\coordinate (Yb) at (2.33,.38);
	\coordinate (Yc) at (2.42,.35);
	\coordinate (Yd) at (2.62,.175);
	\coordinate (Ye) at (2.66,.078);
	\draw[line width=5pt, spring] (X) arc (148-360:21:.5);
	\draw[line width=5pt, lavender] (X) arc (148:126:.5);
	
	\draw[thick] (N) circle (.5);
	
	\foreach \x in {X,Ya,Yb,Yc,Yd,Ye}{\draw (\x) circle (1.5pt);}
	
	\draw[very thick] 
	(A) to [out=\aa,in={\b-180}]
	(B) to [out=\b,in={\c-180}]
	(C) to [out=\c,in={\d-180}]
	(D) to [out=\d,in={\e-180}] 
	(E) to [out=\e,in={\f-180}] 
	(F) to [out=\f,in={\g-180}] 
	(G) to [out=\g,in={\h-180}] 
	(H) to [out=\h,in={\i-180}] 
	(I) to [out=\i,in={\ja}] 
	(J);
	\draw[very thick] (J) to [out=\jb,in={\k-180}] (K) to [out=\k,in={\la}] 
	(L);
	
	\draw[very thick] (L) to [out=\lb,in={\m-180}] 
	(M) to [out=\m,in={\na}] (N);
	
	\draw[very thick] (N) to [out=\nb,in={\o-180}] 
	(O) to [out=\o,in={\pa}] (P);
	
	\draw[very thick] (P) to [out=\pb,in={\q-180}] 
	(Q) to [out=\q,in={\ra}] (R);
	
	\draw[very thick] (R) to [out=\rb,in={\s-180}] 
	(S) to [out=\s,in={\sb-180}] (Sb) to [out=\sb,in={\ta}] (T);
	
	\draw[very thick] (T) to [out=\ta,in={\ua-180}] (Ua)  to [out=\ua,in=\ub-180] (Ub) to [out=\ub,in={\ab}] (A); 
	
	\filldraw[fuchsia] (N) circle (1.5pt);
	\node[below=.5] at (N) {$x$};	
	\end{tikzpicture}
	\begin{minipage}{.1cm}
					\vfill
	\end{minipage}
	\subcaption{$\varepsilon(x,r)\approx \pi$}\label{fig:1b}
			\end{subfigure}
			\caption{}
		\end{figure}
        Carleson conjectured that for Jordan domains, points on the boundary where tangents exist could be characterized in terms of a Dini-type control on this function. 
        \begin{conjecture}[Carleson's $\varepsilon^2$-conjecture]\label{c: carleson's conjecture}
            Suppose $\Omega$ is a Jordan domain. Except for a set of zero $\mathcal{H}^1$-measure, $x$ is a tangent point of $\partial \Omega$ if and only if  
				\begin{equation}\label{intro epsilon}
					\int_0^1 \varepsilon(x,r)^2\frac{dr}{r}<\infty.
			\end{equation}
        \end{conjecture}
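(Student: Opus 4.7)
The strategy splits naturally into the two implications, which I treat separately.

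For the forward direction (tangent point implies quadratic summability of $\varepsilon$), my approach is to exploit that near a tangent point $x$ with tangent line $L$, the boundary $\partial\Omega$ lies inside an $o(r)$-tube around $L$ at each scale $r$. Since $L$ bisects $\partial B(x,r)$ into two half-circles, a direct geometric comparison yields an estimate of the form $\varepsilon(x,r) \lesssim \beta_\infty(x,r) + (\text{lower order})$, where $\beta_\infty$ is the sup-flatness of $\partial\Omega$ at scale $r$. To upgrade pointwise smallness to the desired $L^2$-Dini condition at $\mathcal{H}^1$-a.e.\ tangent point, I would apply a Dorronsoro-type square-function inequality to the rectifiable portion of $\partial\Omega$ on which the tangent points live, which furnishes quadratic Carleson control of $\beta_2$ and thereby of $\varepsilon$.

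The reverse direction is the main content. Given $x \in \partial\Omega$ with $\int_0^1 \varepsilon(x,r)^2\,dr/r < \infty$, I would implement a stopping-time corona decomposition. Fix constants $M, \varepsilon_0 > 0$ and restrict to a compact set $E \subset \partial\Omega$ on which the quadratic integral is uniformly bounded by $M$; a Chebyshev-type selection then guarantees that for every $x \in E$ and all scales outside a sparse exceptional set, $\varepsilon(x,r) < \varepsilon_0$. Build a tree of stopping-time balls on which this smallness persists and partition a neighborhood of $E$ into coherent regions. Inside each region, the \emph{bilateral} smallness of $\varepsilon$, namely that both arcs $I^\pm(x,r)$ nearly fill a half-circle, is strictly stronger than one-sided flatness and should force $\partial\Omega$ to lie close to a Lipschitz graph. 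Standard differentiation of Lipschitz functions then produces tangents at $\mathcal{H}^1$-a.e.\ point of the region, and exhausting $E$ by such regions completes the argument.

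The principal obstacle is precisely the step from $\varepsilon$-smallness at many scales to genuine bilateral flatness of $\partial\Omega$. The quantity $\varepsilon(x,r)$ only sees the arc-length deficit of the \emph{largest} arc in $\Omega^\pm$ on $\partial B(x,r)$, so small additional crossings of $\partial B(x,r)$ by the boundary are invisible. To control these spurious crossings I would set up a double-counting packing argument: each crossing not captured by $I^\pm$ must come from a highly curved piece of $\partial\Omega$ that contributes to $\varepsilon(y,s)$ at some nearby $(y,s)$, so that finiteness of $\int \varepsilon^2\,dr/r$ constrains the total spurious contribution. A blow-up/compactness argument at the end should then turn the assertion ``every tangent cone is a line'' into the existence of a genuine tangent, concluding the proof.
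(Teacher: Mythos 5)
The statement you are asked to prove is stated in the paper as Conjecture~\ref{c: carleson's conjecture} and is \emph{not} proved there. The paper explicitly attributes the ``only if'' direction to prior work (citing Bishop~\cite{Bi87}, Bishop--Jones~\cite{BJ94}, Garnett--Marshall~\cite{GM05}) and the converse, which was open for three decades, to Jaye, Tolsa, and Villa~\cite{JTV21}. So there is no in-paper proof against which your proposal can be compared; what follows is an assessment of the proposal on its own terms.

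Your outline of the forward direction is roughly the standard one: near a tangent point one has $\varepsilon(x,s)\lesssim\beta(x,r)$ for $s\in(r/2,r)$ (this is exactly the fact the paper attributes to~\cite{JTV21} in its discussion following Figure~\ref{fig:2}), and a square-function estimate for $\beta$ on the rectifiable part of the boundary then gives the $L^2$-Dini condition a.e. That is a plausible sketch, modulo the usual care in passing between $\beta_\infty$, $\beta_2$, and Dorronsoro's theorem.

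The reverse direction is where the plan does not hold together. The single biggest issue is that ``bilateral smallness of $\varepsilon$ \ldots should force $\partial\Omega$ to lie close to a Lipschitz graph'' is precisely what the conjecture was stuck on for thirty years, and it is \emph{false} as a pointwise implication: the paper's own double-spiral example (Figure~\ref{fig:2}) has $\varepsilon(x,r)\equiv 0$ at the origin at every scale, yet the boundary is nowhere near a line there and the origin is not a tangent point. The $\varepsilon$-function takes a measurement only on circular shells, while $\beta$ sees the full disk; your proposed chain ``$\varepsilon$ small at many scales $\Rightarrow$ flat $\Rightarrow$ Lipschitz graph'' therefore breaks at the very first arrow. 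The ``double-counting packing argument'' for controlling spurious crossings of $\partial B(x,r)$ is named but not carried out, and this is not a detail that can be deferred: the entire analytic content of~\cite{JTV21} lives exactly in converting shell information into full geometric information while defeating spiral-type configurations, and they need substantially more than a stopping-time decomposition plus a crossing-count to do it. In its present form the proposal restates the known difficulty without providing a mechanism to overcome it, so the reverse direction cannot be regarded as sketched, let alone proved.
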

    The ``only if'' direction of the conjecture is well-known, see for instance \cite{Bi87,BJ94,GM05}. The converse direction was open for more than thirty years, only recently being resolved by Jaye, Tolsa, and Villa in \cite{JTV21}. 
   
   In \cite{J90}, Jones introduced another geometric function, known as the $\beta$-numbers, they measure how far a set deviates from a line inside of a ball.
   \begin{definition}[Jones $\beta$-numbers]\label{d: beta numbers}
    Let $\Gamma\subset \mathbb{R}^2$ be closed. For $x\in \Gamma$ and $r>0$, 
\begin{equation*}
    \beta(x,r)=\frac{1}{r}\inf_{L:\,L\cap B(x,r)\neq \emptyset}\sup_{y\in \Gamma\cap B(x,r)} d(y, L)\qquad \text{where $L$ is an affine line}.
\end{equation*}
\end{definition}
		
  Conjecture \ref{c: carleson's conjecture}, after being fully established by \cite{JTV21}, gives a characterization of the tangent points of Jordan domains up to a set of measure zero. In \cite{BJ94}, the authors proved a version of Conjecture \ref{c: carleson's conjecture} in terms of the $\beta$-numbers. Both \cite{JTV21} and \cite{BJ94} are results in the plane, so it is natural to consider the case of higher dimensions. In \cite{FTV23}, Fleschler, Tolsa, and Villa prove a higher dimensional analogue of Conjecture \ref{c: carleson's conjecture}. Additionally, in \cite{HV21}, Hyde and Villa prove a higher dimensional version of Conjecture \ref{c: carleson's conjecture} involving a variant of the $\beta$-numbers.
   
  In \cite{DKT01}, the authors prove that having quantitative control on the rate of decay of the $\beta$-numbers provides quantitative information about the regularity of $\partial \Omega$. We state this result for $1$-dimensional sets in $\mathbb{R}^2$, although the result in \cite{DKT01} is proved in higher dimensions.
 \begin{proposition}[Proposition 9.1, \cite{DKT01}]\label{prop: DKT}
			Let $0<\beta\leq 1$ be given. Suppose that $\Gamma$ is a \textit{Reifenberg-flat} set with vanishing constant of dimension $1$ in $\mathbb{R}^2$ (see definition \ref{def: Reifenberg flat}),
			and that for each compact set $K\subset \Gamma$ there exists a constant $C_K$ such that
			\begin{equation}\label{e: quant beta number decay}
				\beta(x,r)\leq C_K r^{\gamma}\qquad \text{for all}\quad x\in K, r\leq 1.
			\end{equation}
			Then $\Gamma$ is a $C^{1,\gamma}$-submanifold of dimension $1$ in $\mathbb{R}^2$.
\end{proposition}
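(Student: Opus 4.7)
The plan is to follow the classical scheme for upgrading $\beta$-number decay to $C^{1,\gamma}$ regularity, in the vein of the original argument of David and Toro that underlies Proposition~\ref{prop: DKT}. Fix a compact $K \subset \Gamma$ and let $x \in K$. For each scale $r \leq 1$ let $L(x,r)$ denote an affine line realizing (or nearly realizing) the infimum in Definition~\ref{d: beta numbers}. The first step is to show that consecutive best-approximating lines are close in angle: because $\Gamma$ is Reifenberg-flat with vanishing constant, any ball $B(x,r)$ with $r$ small contains two points of $\Gamma$ that are separated by a definite fraction of $r$ in the direction of $L(x,r)$; these same two points lie in $B(x,2r)$ and are within $r\beta(x,2r) \lesssim C_K r^{1+\gamma}$ of $L(x,2r)$. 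An elementary geometric lemma then yields
\[
\angle(L(x,r), L(x,2r)) \lesssim C_K r^\gamma.
\]

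Summing a geometric series over dyadic scales $r_j = 2^{-j}$ gives that $L(x,r_j)$ is Cauchy and that the limiting line $L(x) \mathrel{:=} \lim_{r \to 0} L(x,r)$ exists with
\[
\angle(L(x,r), L(x)) \lesssim C_K r^\gamma \qquad \text{for all } r \leq 1.
\]
The next step is to promote this scale-to-scale control into a point-to-point Hölder estimate on the map $x \mapsto L(x)$. Given $x,y \in K$ with $s \mathrel{:=} |x-y|$ small, I would compare $L(x)$ and $L(y)$ both against $L(x,2s)$: the first comparison is the previous display evaluated at radius $2s$, while for the second one I use that $y \in B(x,2s)$, so the same line $L(x,2s)$ is a legitimate competitor at the pair $(y, 4s)$ with error controlled by $\beta(x,2s)$. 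Combining these yields $\angle(L(x), L(y)) \lesssim C_K s^\gamma$.

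To conclude, the Reifenberg-flat with vanishing constant hypothesis guarantees that $\Gamma$ is locally the image of a bi-Hölder (in fact, locally bi-Lipschitz after the parametrization is straightened using small $\beta$) embedding of an interval or circle; one then shows that the unit tangent vector of the arclength parametrization $\gamma(t)$ coincides with a unit vector spanning $L(\gamma(t))$. The Hölder estimate on $x \mapsto L(x)$ along $\Gamma$ translates into $\gamma'$ being $\gamma$-Hölder in $t$, giving $\Gamma \in C^{1,\gamma}$.

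The main obstacle I expect is the bookkeeping in the second step: going from scale-to-scale angular control at a single point to a genuine Hölder estimate across pairs of points $x,y \in \Gamma$ requires choosing the comparison scale $\approx |x-y|$ and carefully applying the triangle inequality for angles, all while keeping the implicit constants uniform on the compact set $K$. The existence of a nice parametrization (so that the notion of tangent direction for $\Gamma$ agrees with $L(x)$) is where Reifenberg flatness with vanishing constant does its work and should be invoked but not reproved.
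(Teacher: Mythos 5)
Proposition~\ref{prop: DKT} is not proved in this paper: it is imported verbatim (specialized to $d=1$, $n=2$) from \cite{DKT01} and used as a black box. There is therefore no in-paper proof to compare your argument against; the author's only burden is to verify the two hypotheses (Reifenberg-flat with vanishing constant, and power decay of $\beta$), which is what Lemmas~\ref{LEM: Main Lemma 1} and~\ref{LEM: Main Lemma 2} do.

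As a reconstruction of the cited result, your sketch follows the right general scheme, but it is closer to the classical planar ``bounded turning'' argument than to the route actually taken in \cite{DKT01}, where the $C^{1,\gamma}$ regularity is extracted by running the Reifenberg parametrization machinery (their Chapters 5--8) and showing the limiting bi-H\"older map is in fact $C^{1,\gamma}$ when one feeds in the summable $\beta$-decay. Two points in your sketch deserve care. First, the step ``any ball $B(x,r)$ contains two points of $\Gamma$ separated by a definite fraction of $r$ along $L(x,r)$'' is not automatic from the one-sided $\beta$-number in Definition~\ref{d: beta numbers} (which allows $\Gamma\cap B(x,r)$ to be a single point); it is precisely the bilateral Reifenberg-flatness that supplies this spread, and it must be used here, not only in the final step. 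Second, the concluding step, ``$\Gamma$ is locally the image of a bi-H\"older embedding and the tangent of the arclength parametrization agrees with $L(x)$,'' is not a routine invocation: establishing that a vanishing-constant Reifenberg-flat set admits a good local parametrization whose derivative direction agrees with the best-approximating lines is the Reifenberg topological disc theorem, and the $C^{1,\gamma}$ upgrade requires tracking how the H\"older modulus of the tangent field controls the second-order oscillation of that parametrization. Your sketch correctly identifies these as the load-bearing steps and correctly declines to reprove them, which is also what the paper does by citing \cite{DKT01}.

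Also, a small quantitative slip: with the normalization in Definition~\ref{d: beta numbers}, a point $y\in\Gamma\cap B(x,2r)$ satisfies $d(y,L(x,2r))\le 2r\,\beta(x,2r)$, not $r\,\beta(x,2r)$; this only changes constants but should be kept straight in the angle comparison.
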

Many authors have shown that the behavior of the $\beta$-numbers determines the regularity of a given set, see for example \cite{J90,DS91,O92,DS93,BJ94,T95,P97,DKT01,DT12,AT15,Tolsa2015,ghinassi2017sufficient,AS18,HV21,del2022geometric}, and many others.
		In this paper, we address a natural question, namely, does quantitative control of the Carleson $\varepsilon$-function along the lines of Proposition \ref{prop: DKT} yield higher regularity of the boundary. Our main result is the following theorem.

		\begin{theorem}\label{THRM: main theorem}
			Let $\Omega^+\subseteq \mathbb{R}^2$ be a Jordan domain and $\Omega^-:=\mathbb{R}^2\setminus \overline{\Omega^+}$. Let $\Gamma:= \partial \Omega^{\pm}$. Fix some $\alpha\in (0,1)$ and $C_0>0$. Suppose there exists $r_0>0$ such that 
			\begin{equation}\label{eq: epsilon infinity}
				\varepsilon(x,r)\leq C_0r^{\alpha} \qquad \text{for all} \qquad x\in \Gamma, \, r\leq r_0, 
			\end{equation}
			then $\Gamma$ is a $C^{1, \beta}$-manifold of dimension $1$ in $\mathbb{R}^2$, where $\beta$ depends on $\alpha$.
		\end{theorem}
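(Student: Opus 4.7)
The plan is to reduce the theorem to Proposition \ref{prop: DKT} by deducing two facts from the hypothesis $\varepsilon(x,r)\le C_0 r^{\alpha}$: \emph{(a)} $\Gamma$ is Reifenberg-flat with vanishing constant, and \emph{(b)} there exists $\gamma\in (0,1]$, depending only on $\alpha$, such that $\beta(x,r)\le C r^{\gamma}$ for every $x\in\Gamma$ and $r\le r_0$. Proposition \ref{prop: DKT} then immediately yields $\Gamma\in C^{1,\gamma}$, and we set $\beta:=\gamma$.

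The geometric engine is the following. For each $x\in\Gamma$ and $r\le r_0$, the hypothesis forces the arcs $I^{+}(x,r)$ and $I^{-}(x,r)$ to be $O(r^{1+\alpha})$-close to semicircles, so their two common endpoints are nearly antipodal on $\partial B(x,r)$ and determine a natural direction $\ell(x,r)\in S^1$. My first step is a single-scale consistency estimate
\begin{equation*}
\angle\bigl(\ell(x,r),\,\ell(x,r/2)\bigr)\;\lesssim\;\varepsilon(x,r)+\varepsilon(x,r/2)\;\lesssim\;r^{\alpha},
\end{equation*}
obtained by elementary planar geometry, using that the Jordan curve $\Gamma$ must simultaneously realize the entry/exit points on both circles. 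Telescoping over dyadic scales $r,r/2,r/4,\dots$ then produces a limit direction $\ell(x)$ with $\angle(\ell(x,r),\ell(x))\lesssim r^{\alpha}$.

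The second step is to promote this control over directions into actual geometric flatness. If $y\in\Gamma\cap B(x,r)$ were farther than $Cr^{1+\gamma}$ from the line through $x$ in direction $\ell(x,r)$, then comparing $\ell(x,r)$ with $\ell(y,s)$ at the scale $s\asymp\mathrm{dist}(y,\ell(x,r))$ would contradict the Hölder control on $\ell$ established in Step~1. This trapping argument simultaneously yields vanishing Reifenberg-flatness and the power-law bound $\beta(x,r)\lesssim r^{\gamma}$, completing the hypotheses of Proposition \ref{prop: DKT}.

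The main obstacle is the single-scale estimate $\angle(\ell(x,r),\ell(x,r/2))\lesssim\varepsilon(x,r)+\varepsilon(x,r/2)$. The subtlety is that $\varepsilon$ only controls the $\mathcal{H}^1$-measure of arcs on the circumference, not the position of their endpoints; one must exploit that $I^{+}(x,r)$ and $I^{-}(x,r)$ share their endpoints and that the separating Jordan curve is consistent across scales to conclude that these endpoints are nearly antipodal and rotate slowly as $r$ varies. Related estimates appear in $L^2$-summed form in \cite{Bi87, BJ94, JTV21}, but extracting a pointwise power-law version without logarithmic losses may force the exponent $\gamma$ to be strictly smaller than $\alpha$ in the final conclusion.
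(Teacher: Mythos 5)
Your overall reduction is correct and matches the paper: deduce vanishing Reifenberg flatness plus a power-law bound $\beta(x,r)\lesssim r^{\gamma}$, then invoke Proposition~\ref{prop: DKT}. The gap is in Step~1, and it is fatal as stated.

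The single-scale consistency estimate
$\angle(\ell(x,r),\ell(x,r/2))\lesssim\varepsilon(x,r)+\varepsilon(x,r/2)$
is false. The paper's spiral example (Figure~\ref{fig:2}) is a counterexample: for a double exponential spiral $\Gamma$ centered at $x$, each circle $\partial B(x,r)$ meets $\Gamma$ in exactly two antipodal points, so $\varepsilon(x,r)=0$ for every $r$, yet the direction $\ell(x,r)$ determined by those points has angle $\asymp\log r$, so $\angle(\ell(x,r),\ell(x,r/2))$ is a fixed nonzero constant independent of $r$. Thus knowledge of $\varepsilon(x,\cdot)$ at a \emph{single} center $x$, even across all scales, gives no pointwise control on how $\ell(x,\cdot)$ rotates between adjacent scales, and the telescoping step collapses. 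This is precisely the instability the paper flags in the introduction (``While $\beta(x,r)$ gives control on $\beta(y,s)$\dots{} the same is not true for the Carleson $\varepsilon$-function''); it is why the standard $\beta$-number multiscale machinery does not transfer verbatim, and why this problem is not a routine corollary of \cite{DKT01}. (A smaller issue: $I^{+}(x,r)$ and $I^{-}(x,r)$ need not share endpoints, only have endpoints lying in the two short ``trapped'' arcs, so $\ell(x,r)$ needs a cleaner definition; but this is cosmetic next to the main problem.)

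The fix requires using the hypothesis at \emph{many centers}, not just the fixed point $x$. This is exactly what the paper's Half-Scales Lemma~\ref{approx at single point} does: given two boundary points $x_0,y$ a distance $s_0$ apart, one locates a third boundary point $z_1^1$ near the midpoint of the chord, and then applies the Trapped Boundary Lemma \emph{centered at $z_1^1$ at scale $s_1\approx s_0/2$} to force $z_1^1$ to lie within $O(s_0^{1+\alpha/2})$ of the chord. Iterating over a dyadic net of midpoints (Lemmas~\ref{LEM: Step 1}--\ref{LEM: Iterated Triangle Lemma (Step 3)}) yields the one-sided flatness of Lemma~\ref{LEM: Main Lemma 1}, and a separate contradiction argument involving the annulus $A_s$ (Lemma~\ref{LEM: Main Lemma 2}) gives the two-sided $\beta$-number bound. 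The rotation of the hypothesis from ``fixed center, many scales'' to ``many centers, one scale at each'' is the essential idea missing from your proposal; Step~2 of your outline gestures at comparing $\ell(x,r)$ with $\ell(y,s)$, which is the right instinct, but without repairing Step~1 there is no telescoped direction $\ell(x)$ to compare against.
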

		
		In Proposition \ref{prop: DKT}, the authors assume an additional hypothesis, that the set is a Reifenberg-flat set with vanishing constant, which we omit. In \cite{DKT01}, the sets considered are more general, and this assumption ensures that $\Gamma$ has no holes. 
		\begin{definition}[Bilateral $\beta$-numbers]
		  For $x\in \Gamma$ and $r>0$, set   
      \begin{equation}\label{e: bilateral beta}
			b\beta(x,r)=\frac{1}{r}\inf_{L \in G(1,2)}\,D[\Gamma\cap B(x,r), (L+x)\cap B(x,r)],
		\end{equation}
		where 
		\begin{equation*}
			D[E,F]=\sup_{y\in E}d(y,F)+\sup_{y\in F}d(y,E)
		\end{equation*}
		and $G(1,2)$ denotes the set of all $1$-dimensional subspaces of $\mathbb{R}^2$.
		\end{definition}

		\begin{definition}[Reifenberg-flat set with vanishing constant]\label{def: Reifenberg flat}
			Let $\delta>0$. A closed set $\Gamma\subset \mathbb{R}^2$ is $\delta$-Reifenberg-flat of dimension $1$ if for all compact sets $K\subset \Gamma$ there is a radius $r_K>0$ such that 
			\begin{equation*}
				b\beta(x,r)\leq \delta 
    \qquad \text{for all} \qquad x\in K, \, r\leq r_K.
			\end{equation*}
			$\Gamma\subset \mathbb{R}^2$ is a \emph{Reifenberg-flat set with vanishing constant} if for every compact set $K\subset \Gamma$, 
			\begin{equation*}
				\lim_{r\to 0^+}\sup_{x\in K}b\beta(x,r)=0.
			\end{equation*}
		\end{definition}

While $\beta(x,r)$ gives control on $\beta(y,s)$ for $y\in B(x,r/2)$ and $s\in (r/4,r/2)$ the same is not true for the Carleson $\varepsilon$-function, and it is therefore not stable enough to apply the techniques that have been used to relate the behavior of the $\beta$-numbers to the regularity of a given set. See for instance the discussion in \cite[p.141]{DS91}. To emphasize the difference between the $\varepsilon$-function and the $\beta$-numbers, we give the following example. Suppose $\Gamma$ is an exponential spiral. At the origin $\varepsilon(x,r)=0$ but $\beta(x,r)\approx r$, since $\varepsilon$-function takes a measurement of the boundary on circular shells and the $\beta$-numbers take a measurement of the boundary inside balls. See Figure \ref{fig:2}. It is true that if $x$ is a tangent point for $\Gamma$, then $\beta(x,r)$ controls $\varepsilon(x,s)$ up to a multiplicative constant for $s\in (\frac{r}{2},r)$, see \cite{JTV21}. 

		\begin{figure}[h]
			\centering 
				\begin{tikzpicture}[scale=1.6]
					\def\angl{180}
					
					\draw[domain=0:2200, samples=200, smooth, variable=\x, ultra thick]  plot ({.002*exp(.003*\x)*cos(\x)}, {.002*exp(.003*\x)*sin(\x)}); 
					\draw[domain=0:2225, samples=200, smooth, variable=\x, ultra thick, black]  plot ({.002*exp(.003*\x)*cos(\x+\angl)}, {.002*exp(.003*\x)*sin(\x+\angl)}); 
					
					\coordinate (X) at (0,0); 
					\node (Y) at  \polar{2.12}{0} {}; 
					\node[left=35] at (X) {$\Omega^+$};
					\node (Z) at (1,0) {};

					\foreach \x in {300,1700,2150}{
						\draw[blue] (0,0) circle ({.002*exp(.003*\x)}); 
						\filldraw[black] ({.002*exp(.003*\x)*cos(\x+\angl)}, {.002*exp(.003*\x)*sin(\x+\angl)}) circle (1pt); 
						
						\filldraw[black] ({.002*exp(.003*\x)*cos(\x)}, {.002*exp(.003*\x)*sin(\x)}) circle (1pt); 
					}
					
				\end{tikzpicture}
				\caption{$\varepsilon(x,r)= 0\quad\beta(x,r)\approx r$}\label{fig:2}
		\end{figure}
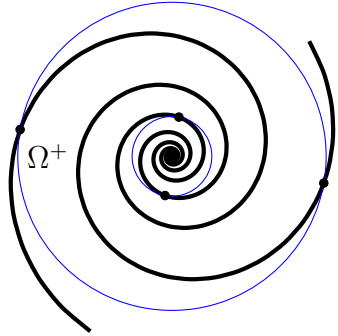

		To prove Theorem \ref{THRM: main theorem}, we show that under the hypotheses, $\Gamma$ is a Reifenberg-flat set with vanishing constant and \eqref{e: quant beta number decay} holds with $\beta=\alpha/16$. Then Proposition \ref{prop: DKT} gives our result. With the discussion above in mind, we remark that the majority of the work in this paper is in proving that the hypothesis on the $\varepsilon$-function in \eqref{eq: epsilon infinity} gives control on the $\beta$-numbers as in \eqref{e: quant beta number decay}. We prove this control, along with the vanishing Reifenberg-flatness, in two lemmas.	
  
		\begin{lemma}\label{LEM: Main Lemma 1}
			Let $\Omega^+\subseteq \mathbb{R}^2$ be a Jordan domain and $\Omega^-:=\mathbb{R}^2\setminus \overline{\Omega^+}$. Let $\Gamma:= \partial \Omega^{\pm}$. Fix some $\alpha\in (0,1)$ and $C_0>0$. Suppose there exists $r_0>0$ such that 
			\begin{equation*}
				\varepsilon(x,r)\leq C_0r^{\alpha} \qquad \text{for all} \qquad x\in \Gamma, \, r\leq r_0.
			\end{equation*}
            Then, there exists $\rho_0\leq r_0$ so that for $x_0\in \Gamma$ and $r\leq\rho_0$ there exists a line $L_0$ through $x_0$ such that 
			\begin{equation}
				\sup_{x\in L_0\cap B(x_0,r)} d(x,\Gamma\cap B(x_0,r))< Cr^{\alpha/2+1}.
			\end{equation}
		\end{lemma}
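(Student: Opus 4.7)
My plan is to extract from the hypothesis a well-defined tangent direction at $x_0$ with a quantitative rate of convergence, and then take $L_0$ to be the line through $x_0$ in that direction. For each scale $s\in(0,r]$, the assumption $\varepsilon(x_0,s)\leq C_0 s^\alpha$ forces both $I^{\pm}(x_0,s)$ to have angular measure within $C_0 s^\alpha$ of $\pi$; hence $\Gamma\cap\partial B(x_0,s)$ is confined to two arcs of angular width $O(s^\alpha)$ at near-antipodal positions on the circle. I define $v(s)\in S^1$ (up to sign, and up to an additive error of order $s^\alpha$) to be the unit direction of the chord joining representatives of these two arcs.

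\textbf{Main step.} The crux is the dyadic direction estimate
\[
|v(s_k)-v(s_{k+1})|\leq C_1\,s_k^{\alpha/2},\qquad s_k:=r\cdot 2^{-k}.
\]
My plan is to apply the hypothesis at an auxiliary point $y\in\Gamma$ lying in one of the two antipodal arcs of $\Gamma\cap\partial B(x_0,s_k)$, at a scale comparable to $s_k$; this pins down $\Gamma$ in a neighborhood of $y$, and combined with the connectedness of $\Gamma$ as a Jordan curve forces $\Gamma\cap\partial B(x_0,s_{k+1})$ to lie near the line $x_0+\mathbb{R}v(s_k)$. A Pythagorean-type comparison between the tangential rotation across scales and the normal deviation of order $s_k\,\varepsilon(x_0,s_k)$ recorded on the shell produces the square-root loss. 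Telescoping the resulting geometric series gives $v(s_k)\to v_\infty\in S^1$ with $|v(s)-v_\infty|\leq C_2\,s^{\alpha/2}$ for all $s\leq r$, and I set $L_0:=x_0+\mathbb{R}v_\infty$.

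\textbf{Distance estimate.} For $y\in L_0\cap B(x_0,r)$, write $y=x_0+tv_\infty$ with $|t|\leq r$, and let $a(|t|)$ be the endpoint of $I^+(x_0,|t|)$ on the same side of $x_0$ as $y$. Then $a(|t|)=x_0+|t|\hat u$ for some $\hat u\in S^1$ with $|\hat u-\sigma v(|t|)|\leq C_3|t|^\alpha$, where $\sigma\in\{+1,-1\}$ matches the sign of $t$. Hence
\[
|y-a(|t|)|\leq |t|\bigl(|v_\infty-v(|t|)|+|v(|t|)-\hat u|\bigr)\leq C_4\,|t|^{1+\alpha/2}\leq C_4\,r^{1+\alpha/2},
\]
and since $a(|t|)\in\Gamma\cap B(x_0,r)$, the required bound follows once $\rho_0\leq r_0$ is chosen small enough to absorb multiplicative constants and to keep all intermediate angular errors below the thresholds used above.

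\textbf{Main obstacle.} The principal difficulty is the dyadic direction estimate. Because $\varepsilon$ is read from a single circular shell centered at a single point, transferring information from scale $s_k$ to scale $s_{k+1}$ requires auxiliary $\varepsilon$-measurements at off-$x_0$ points, and the associated geometry is delicate: this is precisely the instability of $\varepsilon$ under translation and rescaling that the author emphasizes in the paragraph preceding Figure~\ref{fig:2}. Managing the square-root loss carefully is what ultimately produces the exponent $\alpha/2$ in the conclusion of the lemma.
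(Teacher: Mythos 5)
Your intuition is on target: the ``dyadic direction estimate'' you identify as the crux is exactly the content of the paper's Half-Scales Lemma (Lemma~\ref{approx at single point}), and the Pythagorean-type square-root loss you describe is precisely how the exponent $\alpha/2$ arises there. But the proposal has two genuine gaps. First, the dyadic direction estimate is only gestured at; the sketch omits the separation argument that actually lets one transfer $\varepsilon$-information between different centers and scales. In the paper's proof one takes the two intersection points $a,b$ of $\partial B(x_0,s)\cap\partial B(y,s)$ for $y\in\Gamma\cap\partial B(x_0,s)$, shows via the Trapped Boundary Lemma (Lemma~\ref{LEM: Trapped boundary lemma}) that $a$ and $b$ cannot be boundary points and must lie in opposite components $\Omega^{\pm}$, concludes from the Jordan curve theorem that the segment $\overline{ab}$ crosses $\Gamma$ at some $z$, and applies the Trapped Boundary Lemma a second time on $\partial B(z,|z-x_0|)$ to force $z$ within $C_0 s^{\alpha/2+1}$ of the chord $\overline{x_0y}$. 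Your phrase ``pins down $\Gamma$ in a neighborhood of $y$, and combined with connectedness forces $\Gamma\cap\partial B(x_0,s_{k+1})$ near the line'' does not by itself produce any such control; the double use of the Trapped Boundary Lemma around the new crossing point is the step that closes it.

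Second, your distance estimate needs $|v_\infty-v(|t|)|\lesssim|t|^{\alpha/2}$ for \emph{every} $|t|\le r$, but the telescoping you set up controls $v$ only along the dyadic sequence $s_k=r\cdot 2^{-k}$. The Half-Scales argument produces a boundary point at one particular radius $s_1\approx s/2$ from $x_0$, not at every $s'\in[s/2,s]$, so there is no direct bridge from dyadic to general scales. The paper avoids defining a continuous tangent direction $v(s)$ altogether: it fixes $L_0:=L_0^0$ to be the chord through $x_0$ and a boundary point at scale $r$, iterates the Half-Scales Lemma to build a Cantor-like family $\{z_k^j\}$ of boundary points near a dyadic net on that chord (Lemmas~\ref{LEM: Step 1}--\ref{LEM: Iterated Triangle Lemma (Step 3)}), sums the accumulated angular and radial errors (Theorem~\ref{LEM: Dyadic Scales Lemma}), and then covers $L_0^0\cap B(x_0,r)$ by a dyadic mesh finer than $r^{\alpha/2+1}$. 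Making your framework rigorous would in effect require rebuilding this Cantor family to supply the boundary points $a(|t|)$ at all radii, at which point the two arguments coincide.
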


		\begin{lemma}\label{LEM: Main Lemma 2}
			Let $\Omega^+\subseteq \mathbb{R}^2$ be a Jordan domain and  $\Omega^-:=\mathbb{R}^2\setminus \overline{\Omega^+}$. Let $\Gamma:= \partial \Omega^{\pm}$. Fix some $\alpha\in (0,1)$ and $C_0>0$. Suppose there exists $r_0>0$ such that 
			\begin{equation*}
				\varepsilon(x,r)\leq C_0r^{\alpha} \qquad \text{for all} \qquad x\in \Gamma, \, r\leq r_0.
			\end{equation*}
			Then, there exists $\rho_1\leq r_0$ so that 
            \begin{equation*}
                \beta(x_0,r)\leq Cr^{\alpha/16} \qquad \text{for}\quad x_0\in \Gamma \quad \text{and}\quad r\leq \rho_1.
            \end{equation*}
		\end{lemma}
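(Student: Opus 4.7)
The plan is to show that the line $L_0 = L(x_0, r)$ produced by Lemma~\ref{LEM: Main Lemma 1} in fact controls $\beta(x_0, r)$ as well. Since Lemma~\ref{LEM: Main Lemma 1} places $L_0 \cap B(x_0, r)$ in the $Cr^{1+\alpha/2}$-neighborhood of $\Gamma$, it suffices to establish the reverse inclusion with exponent $\alpha/16$: every $y \in \Gamma \cap B(x_0, r)$ lies within $C r^{1+\alpha/16}$ of $L_0$.

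Suppose for contradiction that $y \in \Gamma \cap B(x_0, r)$ satisfies $h := d(y, L_0) > Cr^{1+\alpha/16}$. The main idea is to apply the $\varepsilon$-hypothesis at $y$ at a well-chosen intermediate scale $t \in (h, r)$. Using Lemma~\ref{LEM: Main Lemma 1} at auxiliary points along $L_0$, together with the Jordan curve separation of $\Omega^{\pm}$, one can upgrade Lemma~\ref{LEM: Main Lemma 1} to the statement that $L_0$ approximately separates $\Omega^{+}$ from $\Omega^{-}$ within $B(x_0, r)$. Granted this separation, $L_0$ cuts $\partial B(y, t)$ in a chord at distance $h$ from $y$, producing a larger arc of length $\pi t + O(h)$ and a smaller arc of length $\pi t - O(h)$ lying in $\Omega^+$ and $\Omega^-$ respectively (up to error coming from the displacement of $\Gamma$ from $L_0$). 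These arcs are essentially $I^{\pm}(y, t)$, which gives
\[
\varepsilon(y, t) \gtrsim \frac{h}{t} - C \frac{r^{1+\alpha/2}}{t}.
\]
Combining with $\varepsilon(y, t) \leq C_0 t^{\alpha}$ and optimizing the choice of $t$ in the range $(h, r)$ would force $h \leq C r^{1+\alpha/16}$, contradicting the choice of $y$.

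The main obstacle is making the upgrade from Lemma~\ref{LEM: Main Lemma 1}'s one-sided control to a topological/bilateral separation of $\Omega^{\pm}$ by $L_0$. A priori, $\Gamma$ may oscillate across $L_0$ within a slab of thickness $Cr^{1+\alpha/2}$, which would spoil the clean chord computation of $\varepsilon(y, t)$ and invalidate the identification of $I^{\pm}(y,t)$ with the two arcs of $\partial B(y,t) \setminus L_0$. Controlling this likely requires applying Lemma~\ref{LEM: Main Lemma 1} at many auxiliary points $p \in \Gamma$ near $L_0$ and at multiple scales, and verifying that the resulting lines $L(p, s)$ are all nearly parallel to $L_0$. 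The propagation of errors through such an iteration---in particular the need to ensure that the bilateral approximation holds at the scale $t$ where the $\varepsilon$-computation is carried out---accounts for the loss in exponent from $\alpha/2$ in Lemma~\ref{LEM: Main Lemma 1} to the claimed $\alpha/16$ in Lemma~\ref{LEM: Main Lemma 2}.
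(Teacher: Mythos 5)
Your proposal differs substantially from the paper's proof, and the step you flag as the main obstacle is a genuine gap rather than a deferred technicality: promoting Lemma~\ref{LEM: Main Lemma 1}'s one-sided containment (that $L_0\cap B(x_0,r)$ lies in a $Cr^{1+\alpha/2}$-neighborhood of $\Gamma$) to an approximate separation of $\Omega^{\pm}$ by $L_0$ throughout $B(x_0,r)$ is essentially circular. Asserting that $L_0$ approximately separates $\Omega^+$ from $\Omega^-$ inside $B(x_0,r)$ amounts to asserting that $\Gamma\cap B(x_0,r)$ stays inside a thin slab around $L_0$---any excursion of the Jordan curve away from $L_0$ destroys the separation---but that is precisely the $\beta$-control you are trying to establish. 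Lemma~\ref{LEM: Main Lemma 1} produces a dense set of boundary points near $L_0$; it says nothing about whether the intervening arcs of $\Gamma$ wander far from $L_0$, which is exactly the scenario you must exclude. Applying Lemma~\ref{LEM: Main Lemma 1} at auxiliary points along $L_0$ only manufactures more boundary points near $L_0$ and gives no upper bound on the excursion at $y$, so the circularity persists, and with it the identification of $I^{\pm}(y,t)$ with the two arcs of $\partial B(y,t)\setminus L_0$ remains unjustified.

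The paper avoids this by never trying to show that $L_0$ is a separating line at the large scale. Given the bad point $u$ at distance $s=|x_0-u|$ from $x_0$, it works with the line $L_s$ through $x_0$ and $u$ (not $L_0$), applies Lemma~\ref{LEM: Main Lemma 1} to $L_s$ to produce a boundary point $z$ at a small auxiliary distance $t\approx C_0 s_0^{\alpha/4+1}$ from $u$, and then invokes the Trapped Boundary Lemma~\ref{LEM: Trapped boundary lemma} on $\partial B(u,t)$. That lemma is intrinsically bilateral: once $z$ is pinned on $\partial B(u,t)$, every other point of $\Gamma\cap\partial B(u,t)$ is forced into a tiny arc around $z$ or its antipode, yielding local separation of $\Omega^{\pm}$ on that one circle with no global input. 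The contradiction then comes not from estimating $\varepsilon(y,t)$ but from the Trapped Boundary Lemma on a circle $\partial B(x_0,t^*)$ with $t^*\approx s$: one finds two boundary points, $v$ near $L_0^0$ via Theorem~\ref{LEM: Dyadic Scales Lemma} and $v^*$ near $u$ via the local separation and connectivity, whose arc distance on $\partial B(x_0,t^*)$ is neither short nor close to $\pi t^*$ because $\measuredangle(L_s,L_0^0)\gtrsim C_0 s_0^{\alpha/16}$. To salvage your sketch you would need to replace the global separation step with this kind of circle-by-circle use of the Trapped Boundary Lemma centered at the bad point and at $x_0$.
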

        Note that the exponents $\alpha/2$ in Lemma \ref{LEM: Main Lemma 1} and $\alpha/16$ in Lemma \ref{LEM: Main Lemma 2} are not optimal, and could be improved by making the constants larger. This method will never yield that the decay of the $\varepsilon$-function matches the regularity of the boundary.

		Lemma \ref{LEM: Main Lemma 1} and Lemma \ref{LEM: Main Lemma 2} together yield that $\Gamma$ satisfies the flatness hypothesis necessary to apply Proposition \ref{prop: DKT}. Moreover, Lemma \ref{LEM: Main Lemma 2} gives the necessary decay on the $\beta$-numbers. The proof of Theorem \ref{THRM: main theorem} is then straightforward. Thus, $\Gamma$ is a $C^{1, \frac{\alpha}{16}}$-submanifold of dimension $1$ in $\mathbb{R}^2$ by Proposition \ref{prop: DKT}.
		
		In Section \ref{section: prelims}, we prove a general consequence of the assumption on $\varepsilon(x,r)$. In Section \ref{section: Proof of lemma 1} we prove Lemma \ref{LEM: Main Lemma 1} in several steps, each step its own lemma. In Section \ref{section: proof of lemma 2} we prove Lemma \ref{LEM: Main Lemma 2}.

    \section*{Acknowledgments}
		The author was partially supported by DMS Focus Research Grant 1853993. The author would like to thank Tatiana Toro for being a source of constant support and guidance, Max Goering for his feedback on early drafts of this paper, and Sarafina Ford for her generosity and expertise in tikz.
	\section{Preliminaries}\label{section: prelims}
		Since $\varepsilon(x,t)<C_0t^{\alpha}$, for all $x\in \Gamma$ and $t\leq r_0$, for each such $x$ and $t$ there exist (connected) arcs $I^{\pm}(x,t)\subset \Omega^{\pm}$ such that  
		\begin{equation}\label{eq:bounds on lengths of I+-}
			\begin{array}{rll}
				\pi t-C_0t^{\alpha+1}&\leq  \mathcal{H}^1(I^+(x,t))&\leq  \pi t +C_0 t^{\alpha+1},\\
				\pi t-C_0t^{\alpha+1}&\leq  \mathcal{H}^1(I^-(x,t))&\leq  \pi t +C_0 t^{\alpha+1}. 
			\end{array}
		\end{equation}
		
		Thus the situation on $\partial B(x,t)$ is very rigid, in the sense that the portion of this circumference that is not covered by either $I^+(x,t)$ or $I^-(x,t)$ has length at most $2C_0t^{\alpha+1}$, and since $I^{\pm}(x,t)$ are connected, the location where $\Gamma$ intersects this circumference is tightly controlled. We make this idea rigorous in the next lemma.
		
		\begin{figure}[h]
			\centering
			\begin{tikzpicture}[scale=1]
				\def\angl{160}
				
				\draw[line width = 5pt, spring]  \polar{2}{\angl}  arc(\angl:0:2);
				
				\draw[line width = 5pt, tangerine]  \polar{2}{\angl}  arc(\angl:200:2);
				
				\coordinate (X) at (0,0); 
				\draw[ultra thick] (X) circle (2);
				\filldraw[black] (X) circle (2pt);
				\node[below right=2] at (X) {$x$};

				\node (Y) at  \polar{2}{\angl} {}; 
				\filldraw[black](Y) circle (2pt);

				\node (V) at  \polar{2}{180} {};
				
				\filldraw[black](2,0) circle (2pt);
				
				\draw (-3,0)--(3,0); 
				
				\node (Xr) at  \polar{2}{-\angl} {}; 
				\filldraw[black](Xr) circle (2pt);
				
				\node (W) at  \polar{2}{160} {}; 
				
				\node (V1) at  \polar{3}{160} {};
				\node (V2) at  \polar{3}{-\angl} {};
				\draw[dashed] (X)--(Y);
				\draw[dashed] (X)--(Xr);
				
				\node (A) at  (-0.4,0.4) {}; 
				\node at  (A) {$C_0t^{\alpha}$};
				\draw[thick]  \polar{1}{\angl}  arc(\angl:180:1);
				\draw (A)--(-0.75,0.05);
				\draw[thick]  \polar{0.9}{180}  arc(180:200:0.9);
				\node (D) at  (-0.4,-0.4) {}; 
				\node at  (D) {$C_0t^{\alpha}$};
				\draw (D)--(-0.75,-0.05);
				
				\node (B) at \polar{2.5}{45} {};
				\node[above] at (B) {$I^-(x,t)$};

				\draw (-5,-3) -- (5.5,-3) -- (5.5,-3.7)--(-5,-3.7)--(-5,-3);
				\filldraw[tangerine] (-4.9,-3.1) -- (-4.5,-3.1) -- (-4.5,-3.5)--(-4.9,-3.5)--(-4.9,-3.1);
				\node at (0.5,-3.3) {\text{possible locations for the second endpoint of $I^-(x,t)$}};
				
			\end{tikzpicture}
		\end{figure}
		\begin{figure}[H]
			\begin{center}  
				\begin{tikzpicture}[scale=1]
					\def\angl{160}
					
					\draw[line width = 5pt, spring]  \polar{2}{\angl}  arc(\angl:0:2);
					\draw[line width = 5pt, lavender]  \polar{2}{200}  arc(200:360:2);
					
					\coordinate (X) at (0,0); 
					\draw[ultra thick] (X) circle (2);
					\filldraw[black] (X) circle (2pt);
					\node[below right=2] at (X) {$x$};

					\node (Y) at  \polar{2}{\angl} {}; 
					\filldraw[black](Y) circle (2pt);

					\node (V) at  \polar{2}{180} {};
					
					\filldraw[black](2,0) circle (2pt);
					
					\draw (-3,0)--(3,0); 
					
					\node (Xr) at  \polar{2}{-\angl} {}; 
					\filldraw[black](Xr) circle (2pt);
					
					\node (W) at  \polar{2}{160} {}; 
					
					\node (V1) at  \polar{3}{160} {};
					\node (V2) at  \polar{3}{-\angl} {};
					\draw[dashed] (X)--(Y);
					\draw[dashed] (X)--(Xr);
					
					\node (A) at  (-0.4,0.4) {}; 
					\node at  (A) {$C_0t^{\alpha}$};
					\draw[thick]  \polar{1}{\angl}  arc(\angl:180:1);
					\draw (A)--(-0.75,0.05);
					\draw[thick]  \polar{0.9}{180}  arc(180:200:0.9);
					\node (D) at  (-0.4,-0.4) {}; 
					\node at  (D) {$C_0t^{\alpha}$};
					\draw (D)--(-0.75,-0.05);
					
					\node (B) at \polar{2.5}{45} {};
					\node[above] at (B) {$I^-(x,t)$};
					\node (C) at \polar{3}{-45} {};
					\node[above] at (C) {$I^+(x,t)$};
				\end{tikzpicture}
				\caption{Possible configuration}
				\label{fig: trapped boundary}
			\end{center}
		\end{figure}
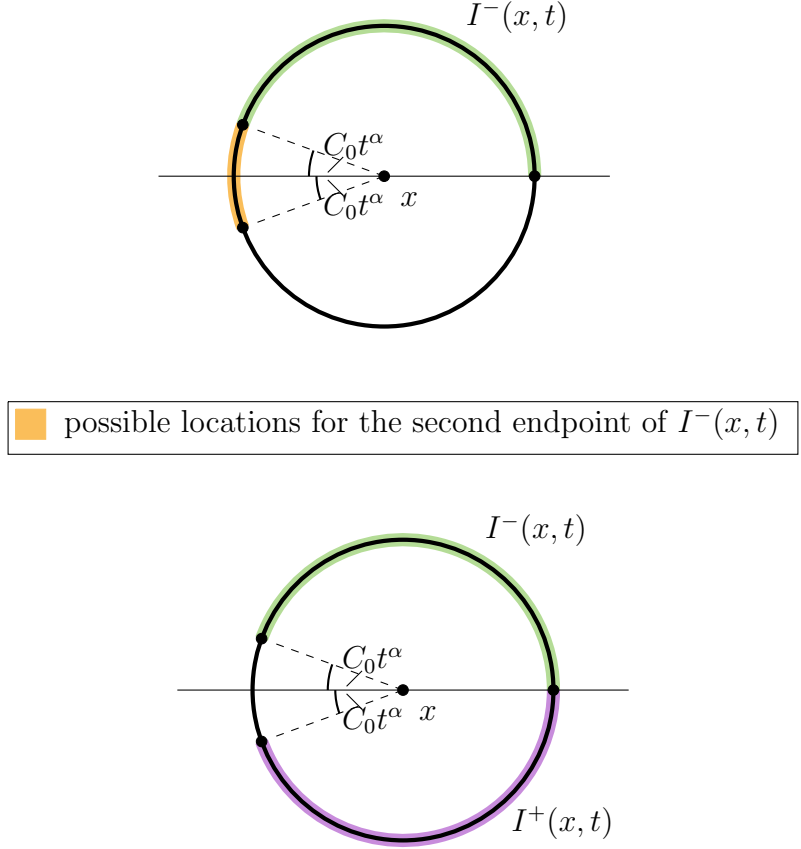

		\begin{lemma}[Trapped Boundary Lemma]\label{LEM: Trapped boundary lemma}
			Assume the hypotheses of Theorem \ref{THRM: main theorem}. Fix $x\in \Gamma$ and $t\leq r_0$. If $v,w\in \partial B(x,t)\cap \Gamma$, let $\arc_{x,t}(v,w)$ denote the shortest arc on $\partial B(x,t)$ with endpoints $v$ and $w$. Then 
			\begin{equation}\label{eq: where can the bdry live on circular shell}
				\mathcal{H}^1(\arc_{x,t}(v,w))\in [0, 2C_0 t^{\alpha+1}] \cup [\pi t- C_0t^{\alpha+1}, \pi t].
			\end{equation}
		\end{lemma}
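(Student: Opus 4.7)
The plan is to exploit the rigidity already described in the paragraph preceding the lemma: the arcs $I^+(x,t)$ and $I^-(x,t)$ are disjoint open arcs, contained in the disjoint open sets $\Omega^+$ and $\Omega^-$, and by \eqref{eq:bounds on lengths of I+-} each has length at least $\pi t - C_0 t^{\alpha+1}$. Their complement on $\partial B(x,t)$ therefore consists of at most two closed ``gap'' arcs $G_1, G_2$, with
\[
\mathcal H^1(G_1)+\mathcal H^1(G_2)\;=\;2\pi t - \mathcal H^1(I^+(x,t))-\mathcal H^1(I^-(x,t))\;\leq\;2C_0 t^{\alpha+1}.
\]
Since $I^\pm(x,t)\subset \Omega^\pm$ and $\Gamma\cap \Omega^\pm=\emptyset$, any point of $\Gamma\cap \partial B(x,t)$, and in particular $v$ and $w$, must lie in $G_1\cup G_2$.

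From here I would split into two cases. In the first, $v$ and $w$ lie in the same gap arc $G_i$; then the shorter arc between them is contained in $G_i$, so $\mathcal H^1(\arc_{x,t}(v,w))\leq \mathcal H^1(G_i)\leq 2C_0 t^{\alpha+1}$, landing in the first interval of \eqref{eq: where can the bdry live on circular shell}. In the second case, $v$ and $w$ lie in distinct gaps $G_1$ and $G_2$; then by the cyclic order $I^+,G_2,I^-,G_1$ on the circle, each of the two arcs from $v$ to $w$ contains exactly one of $I^\pm$ together with subarcs of $G_1$ and $G_2$. Denoting the two arc-lengths $A_1, A_2$, one gets the two-sided bound $\mathcal H^1(I^\pm(x,t))\leq A_i \leq \mathcal H^1(I^\pm(x,t)) + \mathcal H^1(G_1) + \mathcal H^1(G_2)$, and since $A_1+A_2=2\pi t$, the shorter one satisfies $A_{\min}\leq \pi t$; combined with \eqref{eq:bounds on lengths of I+-} this yields $A_{\min}\in [\pi t - C_0 t^{\alpha+1},\,\pi t]$, i.e. the second interval in \eqref{eq: where can the bdry live on circular shell}.

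There is no real obstacle here; the statement is essentially bookkeeping once one accepts that $\Gamma\cap\partial B(x,t)\subset G_1\cup G_2$. The only minor point to verify is that the complement of the two disjoint open arcs $I^\pm$ on the circle is indeed a union of at most two closed arcs (it could degenerate to one if $I^+$ and $I^-$ share an endpoint), which does not affect the case analysis: if only one gap is present, only the first case can occur. This is exactly the picture illustrated in Figure~\ref{fig: trapped boundary}, and no further structure of $\Gamma$ beyond being the common boundary of $\Omega^\pm$ is needed.
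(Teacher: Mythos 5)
Your proposal is correct and follows essentially the same route as the paper: both arguments hinge on the fact that $\Gamma\cap\partial B(x,t)$ is confined to the short complement of $I^+\cup I^-$, and both do a two-case analysis. The paper splits on whether $\arc_{x,t}(v,w)$ itself meets $I^\pm$ while you split on whether $v,w$ sit in the same gap arc, but these distinctions coincide for the range of $t$ under consideration and the resulting estimates are identical.
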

		\vspace{\baselineskip}
		\begin{proof}
		Since $x,t$ are fixed throughout the proof, we drop the dependence on them from the notation. 
		We have the following two cases: 
		\begin{enumerate}
			\item[(i)] $\arc(v,w)\cap I^+=\emptyset$ and $\arc(v,w)\cap I^-=\emptyset$, or 
			\item[(ii)] either $\arc(v,w)\cap I^+\neq \emptyset$ or $\arc(v,w)\cap I^-\neq \emptyset$. 
		\end{enumerate}
		
		If 
		$\arc(v,w)\cap I^+=\emptyset$ and $\arc(v,w)\cap I^-=\emptyset$, then, 
		\begin{equation*}
			2\pi t\geq \mathcal{H}^1(\arc(v,w))+\mathcal{H}^1(I^+)+\mathcal{H}^1(I^-)\geq 2\pi t-2C_0t^{\alpha+1}+\mathcal{H}^1(\arc(v,w)).
		\end{equation*}
		It then follows immediately  
		\begin{equation*}
			0\leq \mathcal{H}^1(\arc(v,w))\leq 2C_0t^{\alpha+1}.
		\end{equation*}
		
		Now suppose that $\arc(v,w)\cap I^+\neq \emptyset$. Since $v,w\in \Gamma$, $I^+$ is the largest connected component on $\partial B(x,t)\cap \Omega^+$ and $\arc(v,w)$ connected, it follows that $I^+\subset \arc(v,w)$, and  
		\begin{equation*}
			\mathcal{H}^1(\arc(v,w))\geq \mathcal{H}^1(I^+)\geq \pi t -C_0t^{\alpha+1}.
		\end{equation*}
		Since $\arc(v,w)$ is the shortest arc between $v$ and $w$ on $\partial B(x,t)$, 
		\begin{equation*}
			\mathcal{H}^1(\arc(v,w))\leq \pi t.
		\end{equation*}
	\end{proof}
 
		Lemma \ref{LEM: Trapped boundary lemma} is crucial in the proofs of both Lemma \ref{LEM: Main Lemma 1} and Lemma \ref{LEM: Main Lemma 2} and also yields the following corollary.
		
		\begin{corollary}[Line Choice]\label{COR: line choice} Suppose $y,w\in \partial B(x_0,t)\cap \Gamma$. Let $L_t(x_0,w)$ denote the line determined by $x_0$ and $w$, and let $L_t(x_0,y)$ be the line determined by $x_0$ and $y$. Then, 
			\begin{equation}\label{eq: line choice lem conclusion}
				\measuredangle(L_t(x_0,w), L_t(x_0,y))\leq 2C_0t^{\alpha}.
			\end{equation}
		\end{corollary}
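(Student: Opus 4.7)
The plan is to reduce the statement directly to the Trapped Boundary Lemma by translating arc length on $\partial B(x_0,t)$ into central angle at $x_0$, and then observing that for \emph{unoriented} lines through $x_0$ the relevant quantity is always the minimum of the central angle and its supplement.

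More concretely, I would let $\ell = \mathcal{H}^1(\arc_{x_0,t}(y,w))$ and let $\theta = \ell/t \in [0,\pi]$ be the central angle at $x_0$ subtended by the shortest arc between $y$ and $w$. Since $L_t(x_0,y)$ and $L_t(x_0,w)$ are lines (not rays), the angle between them is $\measuredangle(L_t(x_0,w),L_t(x_0,y)) = \min(\theta,\pi-\theta)$.

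Now I would apply Lemma \ref{LEM: Trapped boundary lemma} to $v=y$, $w=w$, which gives $\ell \in [0,2C_0 t^{\alpha+1}] \cup [\pi t - C_0 t^{\alpha+1},\pi t]$, hence $\theta \in [0,2C_0 t^{\alpha}] \cup [\pi - C_0 t^{\alpha},\pi]$. In the first case $\min(\theta,\pi-\theta) \leq \theta \leq 2C_0 t^{\alpha}$, and in the second case $\min(\theta,\pi-\theta) = \pi-\theta \leq C_0 t^{\alpha} \leq 2C_0 t^{\alpha}$. In either case the desired bound \eqref{eq: line choice lem conclusion} follows.

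There is no real obstacle: the entire content of the corollary is packed into the dichotomy supplied by Lemma \ref{LEM: Trapped boundary lemma}, and the only thing to be careful about is that the statement concerns unoriented lines, so that the ``large arc'' alternative (nearly antipodal $y$ and $w$) still yields a small angle between the lines. One might want to note for the reader's orientation that for small $t$ the second interval lies strictly above the first, so the dichotomy is genuine, but no further calculation is needed.
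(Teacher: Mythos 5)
Your proof is correct and is essentially the same argument the paper gives: both reduce to the dichotomy in the Trapped Boundary Lemma, and both handle the ``nearly antipodal'' branch by observing that the angle between unoriented lines is the supplement of the central angle (the paper realizes this via the explicit antipodal point $y' = 2x_0 - y$, which is just another way of writing $\min(\theta,\pi-\theta)$).
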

		\noindent\textbf{\textit{Proof.}} Let $y,w\in \partial B(x_0,t)\cap \Gamma$. Then either $\mathcal{H}^1(\arc_{x_0,t}(y,w))\in [0,2C_0t^{\alpha+1}]$ or $\mathcal{H}^1(\arc_{x_0,t}(y,w))\in [\pi t -c_0t^{\alpha+1}, \pi t]$ by Lemma \ref{LEM: Trapped boundary lemma}. If $\mathcal{H}^1(\arc_{x_0,t}(y,w))\in [0,2C_0t^{\alpha+1}]$, then 
        \begin{equation*}
            \measuredangle(L_t(x_0,w), L_t(x_0,y))=\frac{\mathcal{H}^1(\arc_{x_0,t}(y,w))}{t}\leq 2C_0t^{\alpha}. 
        \end{equation*}
        If $\mathcal{H}^1(\arc_{x_0,t}(y,w))\in [\pi t -C_0t^{\alpha+1}, \pi t]$, let $y^{\prime}=2x_0-y$ be the antipodal point to $y$ on $\partial B(x_0,t)$. Then $\mathcal{H}^1(\arc_{x_0,t}(y^{\prime},w))\in [0,C_0t^{\alpha+1}]$ and $L(x_0,y)=L(x_0,y^{\prime})$, so as above,
        \begin{equation*}
            \measuredangle(L_t(x_0,w), L_t(x_0,y))\leq 2C_0t^{\alpha}.
        \end{equation*}
		\begin{flushright}
			$\square$
		\end{flushright}
		\begin{definition}[Good Approximating Line]\label{def: good approx line}
			Fix $x\in \Gamma$ and $r<r_0$. For any $y\in \partial B(x,r)\cap \Gamma$, we call $L_r(x,y)$  \textbf{\textit{a good approximating line}} at scale $r$ for $\Gamma$ at $x$. 
		\end{definition}

	\section{Proof of Lemma \ref{LEM: Main Lemma 1}}\label{section: Proof of lemma 1}
		To prove Lemma \ref{LEM: Main Lemma 1}, it is sufficient to find a dyadic collection of points on the line $L_0$, which we denote $L_0^0$ for coherence in the argument, so that for each point in the collection, there is a boundary point sufficiently close by. The idea, is that as long as the dyadic collection can be taken at a fine enough scale, then every point in the line is close to a dyadic point, which is, in turn, close to the boundary. We begin by showing that at the first dyadic scale, scale $\frac{1}{2}$, there is a boundary point close by. 
		
		\begin{lemma}[Half-Scales Lemma]\label{approx at single point} 
			Under the hypotheses of Lemma \ref{LEM: Main Lemma 1} there exists $\rho_0\leq r_0$ such that for $x_0\in \Gamma$ and any $s_0\leq \rho_0$ the following holds. Let $y\in \partial B(x_0,s_0)\cap \Gamma$, and let $L_0^0:=L_{s_0}(x_0,y)$. Let $x_1^1(L_0^0)$ be the midpoint of the segment between $x_0$ and $y$ on $L_0^0$, that is 
			\begin{equation*}
				|x_1^1(L_0^0)-x_0|=\frac{1}{2^1}s_0.
			\end{equation*}
			Then, there exists a point $z_1^1\in \Gamma$ such that 
			\begin{equation}\label{radius close to halfway}
				|x_1^1(L_0^0)-z_1^1|<C_0s_0^{\alpha/2+1},
			\end{equation}
			and 
			\begin{equation}\label{e: radius for half-scales}
				|s_1-\frac{s_0}{2}|<C_0^2s_0^{\alpha+1} \quad \text{where}\quad s_1=|z_1^1-x_0|=|z_1^1-y|.\\
			\end{equation}
			Moreover,
			\begin{align}\label{small angles between halfway lines}
				\begin{split}
					\measuredangle(L_0^0, L_1^0)&\leq C_1s_0^{\alpha/2}\\
					\measuredangle(L_0^0, L_1^1)&\leq C_1s_0^{\alpha/2}
				\end{split}
			\end{align}
			where $L_1^{0}$ denotes the line through $x_0$ and $z_1^1$, and $L_1^{1}$ denotes the line through $z_1^1$ and $y$, and $C_1$ is a constant depending only on $C_0$.
		\end{lemma}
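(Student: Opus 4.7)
The plan is to find $z_1^1$ as a point of $\Gamma$ lying on the perpendicular bisector $P$ of $\overline{x_0 y}$, close to the midpoint $x_1^1$. Requiring $z_1^1 \in P$ makes the equidistance $|z_1^1 - x_0| = |z_1^1 - y|$ in \eqref{e: radius for half-scales} automatic, and the parameter $\rho_0 \leq r_0$ will be chosen small enough so that all $O(s_0^\alpha)$ terms that appear below are absorbed into the claimed constants.

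Existence of some point in $\Gamma \cap P$ is immediate from the Jordan-curve structure: $x_0$ and $y$ lie on opposite sides of $P$ at distance $s_0/2$, and any arc of $\Gamma$ joining them must cross $P$ by continuity. The substantive task is to select a crossing inside $B(x_1^1, C_0 s_0^{\alpha/2+1})$. To do this, I would first prove a \emph{scale-stability} statement: for every $s \in [s_0/2, s_0]$ and every $w \in \Gamma \cap \partial B(x_0, s)$, the line $L_s(x_0, w)$ makes angle $O(s_0^\alpha)$ with $L_0^0 = L_{s_0}(x_0, y)$. Corollary \ref{COR: line choice} already gives this angular control within a single scale; to propagate it across the dyadic jump from $s_0$ to $s_0/2$, I would telescope across intermediate scales $s_0 = t_0 > t_1 > \dots > t_N = s_0/2$ with ratios $t_{k+1}/t_k$ close to $1$, combining Corollary \ref{COR: line choice} at each scale with the topological fact that the near-semicircular arc $I^+(x_0, t) \subset \Omega^+$ varies continuously with $t$ inside the open set $\Omega^+$. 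The resulting per-scale angular shift is $O(t_k^\alpha)$, and the geometric sum remains $O(s_0^\alpha)$.

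Granting scale stability, pick any $z \in \Gamma \cap P$ with $|z - x_0|$ comparable to $s_0/2$ and write $z = x_1^1 + \epsilon$ with $\epsilon \perp L_0^0$. The line through $x_0$ and $z$ then makes angle $\arctan(2|\epsilon|/s_0)$ with $L_0^0$, which by the scale-stability bound is at most $O(s_0^\alpha)$. This forces $|\epsilon| \leq C_0\, s_0^{\alpha+1}$, which is substantially stronger than the claimed $C_0 s_0^{\alpha/2+1}$. Take such a $z$ as $z_1^1$.

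The remaining estimates are direct consequences. Pythagoras gives
\[
s_1 \;=\; \sqrt{(s_0/2)^2 + |\epsilon|^2} \;\leq\; \tfrac{s_0}{2} + \tfrac{|\epsilon|^2}{s_0} \;\leq\; \tfrac{s_0}{2} + C_0^2\, s_0^{\alpha+1},
\]
which is \eqref{e: radius for half-scales}, and by the symmetry $|z_1^1 - x_0| = |z_1^1 - y|$ the two angular estimates read $\measuredangle(L_0^0, L_1^0),\, \measuredangle(L_0^0, L_1^1) \leq \arctan(2|\epsilon|/s_0) \leq 2C_0\, s_0^{\alpha/2}$, giving \eqref{small angles between halfway lines} with $C_1 = 2C_0$. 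The main obstacle is the scale-stability step: Lemma \ref{LEM: Trapped boundary lemma} and Corollary \ref{COR: line choice} by themselves control boundary points only within a single scale, so ruling out angular drift of the approximating direction across a factor-of-two change in scale is the delicate ingredient driving the whole argument; everything else is just the Pythagorean theorem.
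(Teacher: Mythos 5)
The existence step in your proposal is close in spirit to the paper's, but you never actually produce a crossing point with $|z-x_0|$ comparable to $s_0/2$: ``any arc of $\Gamma$ joining $x_0$ to $y$ must cross $P$'' only yields a crossing somewhere on the perpendicular bisector, possibly far from $x_1^1$. The paper fixes this by restricting to the chord $\overline{ab}$ where $a,b$ are the two intersection points of $\partial B(x_0,s_0)$ with $\partial B(y,s_0)$; the Trapped Boundary Lemma applied at $(y,s_0)$ and $(x_0,s_0)$ shows $a,b\notin\Gamma$, a sign check puts them in opposite components, and connectivity then forces a boundary point $z_1^1$ on $\overline{ab}$, automatically at distance between $s_0/2$ and $s_0$ from $x_0$ and from $y$. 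You would need to reproduce essentially that argument to legitimately ``pick any $z\in\Gamma\cap P$ with $|z-x_0|$ comparable to $s_0/2$.''

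The more serious gap is the \emph{scale-stability} lemma, which you identify as the crux but do not actually prove. Corollary~\ref{COR: line choice} compares two boundary points lying on the \emph{same} circle $\partial B(x_0,t)$; it says nothing about how the preferred direction drifts as $t$ changes. To telescope from $s_0$ to $s_0/2$ you need the angular drift between consecutive scales $t_k$ and $t_{k+1}$, and this is not controlled by the Trapped Boundary Lemma at a single $(x_0,t)$: the lemma constrains $\Gamma\cap\partial B(x_0,t_k)$ and $\Gamma\cap\partial B(x_0,t_{k+1})$ separately, but gives no direct relation between them. Your appeal to ``$I^+(x_0,t)$ varies continuously with $t$'' is not a quantitative estimate and would itself need to be proved with a rate; moreover, if each per-step shift is only $O(t_k^\alpha)\approx O(s_0^\alpha)$ (not $O(\delta\, t_k^\alpha)$ with $\delta$ the relative step size), summing over the $\approx 1/\delta$ steps needed to traverse one octave blows up rather than remaining $O(s_0^\alpha)$. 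The suspiciously strong conclusion $|\epsilon|\le C_0 s_0^{\alpha+1}$ that this would yield is itself a warning sign.

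The paper sidesteps the scale-change problem entirely by a different ball choice: rather than centering balls at $x_0$ and trying to compare across radii, it centers a single ball $B(z_1^1,s_1)$ at the unknown boundary point and observes that \emph{both} $x_0$ and $y$ lie on $\partial B(z_1^1,s_1)$. The Trapped Boundary Lemma then constrains $\mathcal{H}^1(\arc_{z_1^1,s_1}(x_0,y))=2\gamma s_1$, and a short computation with $\cos\gamma=d(z_1^1,L_0^0)/s_1$ shows this arc length falls into the forbidden middle band whenever $d(z_1^1,L_0^0)\ge C_0 s_0^{\alpha/2+1}$. This is a one-shot argument, no telescoping, and it is why the paper only needs the Trapped Boundary Lemma at fixed center and scale. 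Your Pythagorean and $\arctan$ estimates at the end are fine and match the paper once the distance bound is in hand, but the missing scale-stability lemma is a genuine hole, not a routine lemma.
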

		\begin{remark}\label{halfway distance from line to boundary is small}
			Observe that the conclusions of Lemma \ref{approx at single point}, tell us that
			\begin{equation}\label{halfway dist to Gamma}
				d(x_1^1(L_0^0), \Gamma\cap B(x_0,s_0))\leq C_0s_0^{\alpha/2+1}. 
			\end{equation}
		\end{remark}
		\begin{figure}[H]
			\begin{center}
				\begin{tikzpicture}[scale=2]
					\def\angl{7}
					
					\coordinate (X1) at (0,0); 
					\filldraw[black] (X1) circle (2pt);
					\node[below=2] at (X1) {$x_1^1(L_0)$};

					\coordinate (Xr) at (4,0); 
					\filldraw[black] (Xr) circle (2pt);
					\node[below=2] at (Xr) {$y$};
					
					\coordinate (X0) at (-4,0); 
					\filldraw[black] (X0) circle (2pt);
					\node[below=2] at (X0) {$x_0$};
					
					\draw (X0)--(Xr); 
					
					\coordinate (Z) at (0,2); 
					\filldraw[black] (Z) circle (2pt);
					\node[above=2] at (Z) {$z_1^1$};

					\draw (Z)--(X0);
					\draw (Z)--(Xr);
					
					\draw (X0) to node[midway, above] {$s_1\quad L_1^0$}(Z);
					\draw (Z) to node[midway, above] {$L_1^1\quad s_1$}(Xr);
					\draw (X0) to node[midway, below] {$\frac{s_0}{2}$}(X1);
					\draw (X1) to node[midway, below] {$\frac{s_0}{2}$}(Xr);
					\draw[dashed] (Z) to node[midway, right] {$<C_0s_0^{\alpha/2+1}$}(X1);
					
					\draw (.2,0)--(0.2,.2);
					\draw (0,.2)--(0.2,0.2);
				\end{tikzpicture}
                \caption{}
				\label{fig: conclusion of half scales lemma}
			\end{center}
		\end{figure}
	
   \begin{proof} Let $x_0\in \Gamma$ and suppose $s_0\leq \rho_0$, for $\rho_0\leq r_0$ to be chosen later. Assume (by rotating and translating if necessary) $L_0^0$ is the horizontal axis, and $y\in (L_0^0)^+$. Consider $B(x_0,s_0)$ and $B(y,s_0)$. These balls intersect at two points, say $a\in \mathbb{R}^2_+$ and $b\in \mathbb{R}^2_-$. Since 
        \begin{equation}\label{e: s_0 1 half-scales}
           2C_0s_0^{\alpha+1}<\mathcal{H}^1(\arc_{y,s_0}(x_0,a))=\frac{\pi}{3}s_0<\pi s_0-C_0s_0^{\alpha+1}, 
        \end{equation}
        for $s_0<\left(\frac{\pi}{6C_0}\right)^{\frac{1}{\alpha}}$, we can conclude that $a,b$ are not boundary points from the Trapped Boundary Lemma. The same holds for $\mathcal{H}^1(\arc_{x_0,s_0}(a,y))$. Thus, we can assume without loss of generality that $a\in \Omega^+\cap \mathbb{R}^2_+$ and $b \in \Omega^- \cap \mathbb{R}^2_-$.
		\tikzset{every picture/.style={line width=0.75pt}} 
		
		\tikzset{every picture/.style={line width=0.75pt}} 
		
		\tikzset{every picture/.style={line width=0.75pt}} 
		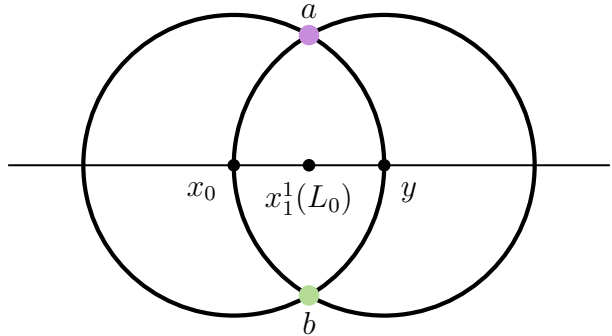
\begin{figure}[H]
			\begin{tikzpicture}[scale=1]
				\def\angl{7}
				
				
				\coordinate (X) at (0,0); 
				\filldraw[black] (X) circle (2pt);
				\node[below=2] at (X) {$x_1^1(L_0)$};
				
				\def\shft{1}
				\def\eps{.6}
				
				
				\coordinate (Xr) at (\shft,0); 
				\draw[ultra thick] (Xr) circle ({2*\shft});
				\filldraw[black] (Xr) circle (2pt);
				\node[below right=2] at (Xr) {$y$};
				
				\coordinate (X0) at (-\shft,0); 
				\draw[ultra thick] (X0) circle ({2*\shft});
				\filldraw[black] (X0) circle (2pt);
				\node[below left=2] at (X0) {$x_0$};
				
				\draw ({4*\shft},0)--({-4*\shft},0); 
				
				\coordinate (a) at (0,{2*\shft*cos(30)}); 
				\filldraw[lavender] (a) circle (3.5pt);
				\node[above=2] at (a) {$a$};
				
				\coordinate (b) at (0,{-2*\shft*cos(30)});
				\filldraw[spring] (b) circle (3.5pt);
				\node[below=2] at (b) {$b$};
				
				
				
			\end{tikzpicture}
			
			\ \caption{The set up}
			\label{fig: set up of half scales}
			
		\end{figure}
		
		Thus there must be exist some point, $z_1^1\in \overline{ab}\cap \Gamma$. We claim that this point $z_1^1$ is ``close'' to $L_0^0$, and in particular, that $z_1^1 \in (L_0^0)_{C_0s_0^{\alpha/2+1}}$, where 
		\begin{equation*}
			(L_0^0)_{C_0s_0^{\alpha/2+1}}:=\{x\in \mathbb{R}^2: d(x,L_0^0)<C_0s_0^{\alpha/2+1}\}.
		\end{equation*}
		
		Suppose not. That is,
		\begin{equation}\label{eq:distance z_0 to L(r)}
			d:=d(z_1^1,L_0^0)\geq C_0s_0^{\alpha/2+1}. 
		\end{equation}
		
		Consider $B(z_1^1,s_1)$, where $s_1=|z_1^1-x_0|=|z_1^1-y|$. To contradict the Trapped Boundary Lemma we will show 
		\begin{equation}\label{eq: contradiction in half scales}
			2C_0s_1^{\alpha+1}<\mathcal{H}^1(\arc_{z_1^1,s_1}(x_0,y))<\pi s_1-C_0 s_1^{\alpha+1}.
		\end{equation}
		Let $\gamma$ be the angle between the radius $\overline{z_1^1x_0}$ and the line segment $\overline{z_1^1b}$. 
		
		\begin{figure}[H]
			\begin{center}
				\begin{tikzpicture}[scale=1]
					\def\angl{7}
					
					
					\coordinate (X1) at (0,0); 
					
					\def\shft{1}
					\def\eps{.6}
					
					\draw[<->] ({4*\shft},\eps) to node[midway, right] {$C_0{s_0^{\alpha/2+1}}$}({4*\shft},0);
					
					\coordinate (Xr) at (\shft,0); 
					\filldraw[black] (Xr) circle (2pt);
					\node[below right=2] at (Xr) {$y$};
					
					\coordinate (X0) at (-\shft,0); 
					\filldraw[black] (X0) circle (2pt);
					\node[below left=2] at (X0) {$x_0$};
					
					\draw ({4*\shft},0)--({-4*\shft},0); 
					\draw[dashed] ({4*\shft},\eps)--({-4*\shft},\eps); 
					\draw[dashed]  ({4*\shft},-\eps)--({-4*\shft},-\eps); 
					
					\coordinate (Z) at (0,{\shft*cos(30)}); 
					\filldraw[black] (Z) circle (2pt);
					\node[left=2] at (Z) {$z_1^1$};

					\def\rad{sqrt((\shft)^2+(\shft*cos(30))^2)}
					\draw[ultra thick] (Z) circle (\rad);
					\draw[line width=2pt, raddish]  (X0) arc({180+40.9}:{360-40.9}:1.323*\shft);
					
					\draw (Z)--(X0);
					\draw (Z)--(Xr);
					\draw (Z)--(X1);
					
					\coordinate (P) at (-.01,{\shft*cos(30)-0.2});
					\node[below left=1] at (P) {$\mathbf{\gamma}$};
					\coordinate (Q) at (.005,{\shft*cos(30)-0.2});
					\node[below right=2] at (Q) {$\mathbf{\gamma}$};
					
					\draw (0,0.2)--(0.2,0.2); 
					\draw (0.2,0.2)--(0.2,0);

				\end{tikzpicture}
				
				\caption{$\arc_{z_1^1,s_1}(x_0,y)$ has measure $2s_1\gamma$.}
				\label{fig: z11 has to be close}
			\end{center}
		\end{figure}
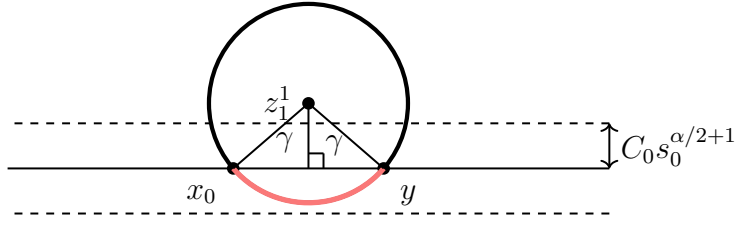
		
		Observe that 
		\begin{align*}
			\cos\,\gamma\,&=\frac{d}{s_1}
			\geq \frac{C_0s_0^{\alpha/2+1}}{s_1} >C_0s_0^{\alpha/2},  
		\end{align*}
		where the first inequality follows from (\ref{eq:distance z_0 to L(r)}) and the last from the fact that $s_0/2\leq s_1<s_0$. Moreover, since the furthest $z_1^1$ can be from $L_0^0$ is $\frac{\sqrt{3}}{2}s_0$, we also have $\cos\, \gamma<\frac{\sqrt{3}}{2}$. Thus,
		$$\frac{\pi }{6}<\gamma < \arccos\,C_0s_0^{\alpha/2}.$$
		Using the Taylor series for $\arccos(x)$ for $x=C_0s_0^{\alpha/2}$ near $0$ 
		we see that
		\begin{align*}
			\arccos(C_0s_0^{\alpha/2})
			&< \frac{\pi}{2}-C_0s_0^{\alpha/2}.
		\end{align*}
		
		Since $\mathcal{H}^1(\arc_{z_1^1,s_1}(x_0,y))=2\gamma s_1$, it follows that 
		\begin{align}\label{e: upper bound on arc in half scales}
			\begin{split}
				\frac{\pi}{3}s_1<\mathcal{H}^1(\arc_{z_1^1,s_1}(x_0,y))&<2\arccos(C_0s_0^{\alpha/2})s_1\\
				&<2\left(\frac{\pi}{2}-C_0s_0^{\alpha/2}\right)s_1\\
				&< \pi s_1-C_0s_1^{\alpha/2}s_1\\
				&< \pi s_1-C_0s_1^{\alpha+1},\\
			\end{split}
		\end{align}
		where we use the fact that, since $s_0<1$, $s_1^{\alpha}<s_1^{\alpha/2}<s_0^{\alpha/2}$ in the final two inequalities. Thus we have obtained the right-hand side inequality in (\ref{eq: contradiction in half scales}). To obtain the left-hand side inequality, it is only necessary to take $s_0$ sufficiently small (which in turn means that $s_1$ is also sufficiently small). Indeed, for $s_0<\left( \frac{\pi}{6C_0}\right)^{\frac{1}{\alpha}}$, since $s_1<s_0$, 
		\begin{equation}\label{e: lower bound on arc in half scales}
			2C_0s_1^{\alpha}s_1<2C_0\left(\left( \frac{\pi}{6C_0}\right)^{\frac{1}{\alpha}}\right)^{\alpha}s_1=\frac{\pi}{3}s_1.
		\end{equation}
		Thus \eqref{e: upper bound on arc in half scales} and \eqref{e: lower bound on arc in half scales} yield (\ref{eq: contradiction in half scales}), which contradicts the Trapped Boundary Lemma for the boundary point $z_1^1$ at scale $s_1$. Thus, we conclude that $d(z_1^1, L_0^0)<C_0 s_0^{\alpha/2+1}$, as claimed. 
		
		Let $\theta=\measuredangle(L_0^0,L_1^0)=\measuredangle(L_0^0,L_1^1)$. Since $\tan (\theta)=\frac{d}{s_0/2}<\frac{C_0s_0^{\alpha/2+1}}{s_0/2}=2C_0 s_0^{\alpha/2}$, we have that 
		\begin{equation}\label{eq: bound on theta Taylor series}
			\theta<\arctan(2C_0s_0^{\alpha/2}).
		\end{equation}
		Using the Taylor expansion for $\arctan$, it is clear that for $x$ small enough 
		\begin{equation*}
			|\arctan\,x-x|\leq \frac{|x|^3}{3}, 
		\end{equation*}
		and thus for $s_0$ small enough,
		\begin{equation}\label{e: half scales angle bound}
			|\arctan(2C_0s_0^{\alpha/2})-(2C_0s_0^{\alpha/2})|\leq \frac{1}{3}(2C_0s_0^{\alpha/2})^3\leq 2C_0^2s_0^{\alpha/2}.
		\end{equation}
		
		Thus \eqref{e: half scales angle bound} and (\ref{eq: bound on theta Taylor series}) yield
		\begin{equation*}
			\theta<2C_0s_0^{\alpha/2}+2C_0^2s_0^{\alpha/2}=C_1s_0^{\alpha/2}, 
		\end{equation*}
		where $C_1=2C_0+2C_0^2$. This gives us (\ref{small angles between halfway lines}).

		It is left to show \eqref{e: radius for half-scales}. Since
		\begin{equation*}
			(s_0/2)^2+d^2=s_1^2\quad  \text{and} \quad s_1\geq s_0/2, 
		\end{equation*}
		we have 
		\begin{align*}
			|s_1-s_0/2|
			&=\frac{d^2}{s_1+s_0/2}
			< \frac{(C_0s_0^{\alpha/2+1})^2}{s_1+s_0/2}
			\leq \frac{C_0^2s_0^{\alpha+2}}{s_0} 
			=C_0^2s_0^{\alpha+1},
		\end{align*}
		since $s_0/2\leq s_1\leq s_0$. Choosing $\rho_0$ small enough so that \eqref{e: s_0 1 half-scales},\eqref{e: upper bound on arc in half scales},\eqref{e: lower bound on arc in half scales}, and \eqref{e: half scales angle bound} hold completes the proof.
   \end{proof}
   
		\begin{remark}
			For $s_1$ in Lemma \ref{approx at single point}, note that 
			\begin{equation*}
				\frac{s_0}{2}\leq s_1\leq \frac{s_0}{2}+C_0^2s_0^{\alpha+1}.
			\end{equation*}
		\end{remark}

		The result of the Half-Scales Lemma gives the following set up. The points, $x_0$,$y$ on $L_0^0$ and $z_1^1$, constructed above, form an isosceles triangle with height, base angles, and side lengths controlled by the hypothesis on the $\varepsilon(x,r)$-function in (\ref{eq: epsilon infinity}).

    In Lemma \ref{LEM: Step 1} we iteratively apply the Half-Scales Lemma to find points $z_k^j$ in $\Gamma$ at the appropriate scale.
		
		\begin{lemma}\label{LEM: Step 1} Under the hypotheses of Lemma \ref{LEM: Main Lemma 1}, for $k\in \mathbb{N}$ there exists a collection of points $\{z_k^j\}_{j=0}^{2^k}\subset \Gamma$ such that for each $1\leq j \leq 2^k-1$ odd, $T_k^j:=z_{k}^{j-1}z_k^jz_{k}^{j+1}$ is an isosceles triangle with side lengths 
			\begin{equation}\label{e: the side lengths are equal}
				\left|z_{k}^{j-1}-z_k^j\right|=\left|z_{k}^{j+1}-z_k^j\right| \qquad \text{and} \qquad  \left|z_{k}^{j-1}-z_{k}^{j+1} \right|,
			\end{equation}
			satisfying
			\begin{align}\label{eq: side length relationship on triangle}
				\begin{split}
					\left|\left|z_{k}^{j-1}-z_k^j\right|-\frac{\left|z_{k}^{j-1}-z_{k}^{j+1} \right|}{2} \right|<C_0^2\left|z_{k}^{j-1}-z_{k}^{j+1} \right|^{\alpha+1}, \\
					\\
					\left|\left|z_{k}^{j+1}-z_k^j\right|-\frac{\left|z_{k}^{j-1}-z_{k}^{j+1} \right|}{2} \right|<C_0^2\left|z_{k}^{j-1}-z_{k}^{j+1} \right|^{\alpha+1}.
				\end{split}
			\end{align}
            Moreover, for $j$ even, $z_k^j=z_{k-1}^{j/2}$.
            Denote the sides of $T_k^j$ as follows, 
            \begin{equation*}
                L_k^{j-1}:=\overline{z_k^{j-1}z_k^j}, \quad L_k^j:= \overline{z_k^jz_k^{j+1}}, \quad \text{and} \quad L_{k-1}^{\frac{j-1}{2}}:=\overline{z_{k-1}^{\frac{j-1}{2}}z_{k-1}^{\frac{j+1}{2}}}.
            \end{equation*}
            Then the base angles of $T_k^j$,  $\measuredangle L_{k-1}^{\frac{j-1}{2}}L_k^{j-1}$ and $\measuredangle L_{k-1}^{\frac{j-1}{2}}L_k^j$ are equal, and both angles satisfy the following bound:
			\begin{equation}\label{eq: angles for triangle }
				\begin{split}
					&\measuredangle\left(L_k^{j-1}L_{k-1}^{\frac{j-1}{2}}\right)<C_1 \left|z_{k}^{j-1}-z_{k}^{j+1} \right|^{\alpha/2}
					\quad \text{and} \quad
					\measuredangle\left(L_k^jL_{k-1}^{\frac{j-1}{2}}\right)<C_1 \left|z_{k}^{j-1}-z_{k}^{j+1} \right|^{\alpha/2}.
				\end{split}
			\end{equation}
			
      For $m\in \mathbb{N}$ and $1\leq n \leq 2^m-1$, denote by $x_m^n(L_{k}^{j})$ the point on $L_k^j$ such that
		\begin{equation}\label{e: dyadic points}
			|x_m^n(L_{k}^{j})-z_k^j|=\frac{n}{2^m}|z_k^{j+1}-z_k^j|, 
		\end{equation}
		where $|z_k^{j+1}-z_k^j|$ is the length of $L_k^j$. Then, for any $m\in \mathbb{N}$
			\begin{equation}\label{eq: vertex of triangle always close to midpoint}
				|z_k^j-x_m^{2^{m-1}}(L_{k-1}^{\frac{j-1}{2}})|<C_0\left|z_{k}^{j-1}-z_{k}^{j+1} \right|^{\alpha/2+1}.
			\end{equation}

		\end{lemma}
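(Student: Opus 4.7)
The plan is to construct the collection $\{z_k^j\}$ inductively in $k$, with the Half-Scales Lemma (Lemma \ref{approx at single point}) serving as the sole geometric input at each stage. For the base case $k=0$, fix $x_0\in\Gamma$ and choose any $y\in\partial B(x_0,s_0)\cap\Gamma$; set $z_0^0=x_0$ and $z_0^1=y$. The triangle assertions are vacuous at level $0$ because the range $1\le j\le 2^0-1$ is empty. (The first nontrivial case $k=1$ is then a direct application of the Half-Scales Lemma to the pair $(x_0,y)$.)

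For the inductive step from level $k-1$ to level $k$, assume $\{z_{k-1}^i\}_{i=0}^{2^{k-1}}$ has been constructed satisfying all conclusions. For each even index $j=2i$ set $z_k^{2i}:=z_{k-1}^i$, which immediately handles the ``$j$ even'' clause. For each odd index $j=2i+1$ with $1\le j\le 2^k-1$, apply the Half-Scales Lemma to the pair $z_{k-1}^i, z_{k-1}^{i+1}$ playing the roles of $x_0$ and $y$ at scale $s:=|z_{k-1}^i-z_{k-1}^{i+1}|$, and declare $z_k^j$ to be the resulting boundary point. Under this identification the line $L_0^0$ of the Half-Scales Lemma coincides with $L_{k-1}^i=\overline{z_k^{j-1}z_k^{j+1}}$, and the four conclusions transcribe directly: the equal-sidelength assertion \eqref{e: the side lengths are equal} is built into the common value $s_1=|z_1^1-x_0|=|z_1^1-y|$ in \eqref{e: radius for half-scales}; the sidelength bound \eqref{eq: side length relationship on triangle} is \eqref{e: radius for half-scales} with $s_0$ replaced by $s$; the angle estimate \eqref{eq: angles for triangle } is \eqref{small angles between halfway lines}; and the midpoint estimate \eqref{eq: vertex of triangle always close to midpoint} is \eqref{radius close to halfway}, once one observes from the defining formula \eqref{e: dyadic points} that $x_m^{2^{m-1}}(L_{k-1}^{(j-1)/2})$ is the midpoint of $L_{k-1}^{(j-1)/2}$ for every $m\in\mathbb N$.

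The only nontrivial verification is that each invocation of the Half-Scales Lemma is legitimate, i.e.\ that the scales $|z_{k-1}^i-z_{k-1}^{i+1}|$ remain $\le\rho_0$ throughout the iteration. This follows by induction from \eqref{eq: side length relationship on triangle}: each new scale is at most $s/2+C_0^2 s^{\alpha+1}$, so for $\rho_0$ small enough (independent of $k$) the scales decay geometrically from the starting value $s_0\le\rho_0$. I expect the main obstacle to be purely notational: keeping straight the translation between the single-ball picture $(x_0,y,z_1^1,L_0^0,L_1^0,L_1^1)$ of the Half-Scales Lemma and the dyadic tree of isosceles triangles $T_k^j$ with labeled edges $L_k^{j-1},L_k^j,L_{k-1}^{(j-1)/2}$ at level $k$. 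No further geometric content beyond the Half-Scales Lemma is required.
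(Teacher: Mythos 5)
Your proposal is correct and follows essentially the same inductive scheme as the paper: set even-indexed points by relabeling the previous generation, apply the Half-Scales Lemma to each edge $L_{k-1}^i$ to produce the odd-indexed points, and transcribe the four conclusions of the Half-Scales Lemma into \eqref{e: the side lengths are equal}--\eqref{eq: vertex of triangle always close to midpoint}. Your reading of \eqref{e: dyadic points}, identifying $x_m^{2^{m-1}}(L_{k-1}^{(j-1)/2})$ with the midpoint of the base for every $m$, is exactly the observation needed to match \eqref{radius close to halfway} with \eqref{eq: vertex of triangle always close to midpoint}.

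One small point worth flagging: you explicitly verify that the scales $|z_{k-1}^i - z_{k-1}^{i+1}|$ stay below $\rho_0$ throughout the iteration (noting they satisfy $s_{\text{new}}\le s/2 + C_0^2 s^{\alpha+1}$ and hence decay geometrically for $\rho_0$ small). The paper's proof of this lemma does not address this admissibility condition inside the proof; it is deferred to Remark \ref{rmk: sk's}, which derives the uniform bound $|z_k^{j-1}-z_k^j|\le s_0(3/4)^k$ under $C_0^2 s_0^\alpha < 1/4$. Including the scale check within the inductive step, as you do, is arguably the cleaner organization, since without it the repeated invocations of the Half-Scales Lemma are not obviously legitimate.
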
    
		\begin{figure}[H]
			\begin{center}
				\begin{tikzpicture}[scale=1.5]
					
					\coordinate (X1) at (0,0); 
					\filldraw[black] (X1) circle (2pt);
					\node[below=2] at (X1) {$x_2^2(L_{k-1}^{\frac{j-1}{2}})$};
					\coordinate (X2) at (-2,0); 
					\filldraw[black] (X2) circle (2pt);
					\node[below=2] at (X2) {$x_2^1(L_{k-1}^{\frac{j-1}{2}})$};
					\coordinate (X3) at (2,0); 
					\filldraw[black] (X3) circle (2pt);
					\node[below=2] at (X3) {$x_2^3(L_{k-1}^{\frac{j-1}{2}})$};
					
					\coordinate (Xr) at (4,0); 
					\filldraw[black] (Xr) circle (2pt);
					\node[below=2] at (Xr) {$z_{k}^{j+1}=z_{k-1}^{\frac{j+1}{2}}$};
					
					\coordinate (w1) at (-2,1); 
					\filldraw[black] (w1) circle (2pt);
					\node[below right=2] at (w1) {\small{$x_1^1(L_k^{j-1})$}};
					\coordinate (w2) at (2,1); 
					\filldraw[black] (w2) circle (2pt);
					\node[below=2] at (w2) {$x_1^1(L_k^{j})$};
					
					\coordinate (X0) at (-4,0); 
					\filldraw[black] (X0) circle (2pt);
					\node[below=2] at (X0) {$z_k^{j-1}=z_{k-1}^{\frac{j-1}{2}}$};
					
					\draw (X0)--(Xr); 
					
					\coordinate (Z) at (0,2); 
					\filldraw[black] (Z) circle (2pt);
					\node[above=2] at (Z) {$z_k^j$};

                \coordinate (T) at (0,1); 
				\node[] at (T) {$T_k^j$};
                \coordinate (S1) at (-2,1); 
				\node[above left] at (S1) {$L_k^{j-1}$};
                \coordinate (S2) at (2,1); 
				\node[above right] at (S2) {$L_k^{j}$}; \coordinate (B) at (0,-.7); 
				\node[below=2] at (B) {$L_{k-1}^{\frac{j-1}{2}}$};   
					\draw (Z)--(X0);
					\draw (Z)--(Xr);

				\end{tikzpicture}
				\caption{Example: Dyadic points on the lines $L_k^{j-1}$, $L_k^{j}$, and $L_{k-1}^{\frac{j-1}{2}}$}
				\label{fig: dyadic labeling on base of triangle}
			\end{center}
		\end{figure}
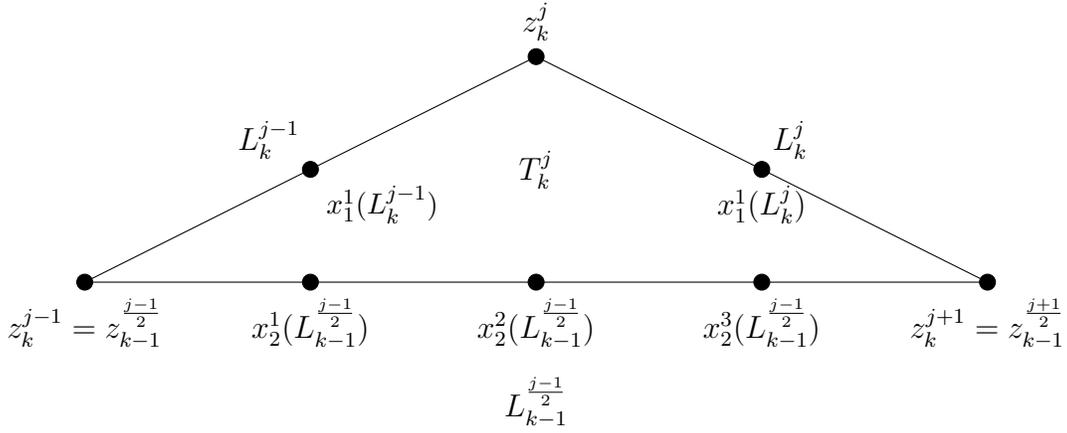

    After proving Lemma \ref{LEM: Step 1}, the goal is to then prove that each $z_k^j$ is very close to a dyadic point, $x_k^j(L_0^0)$, in $L_0^0$. This is proved in two steps. In Lemma \ref{LEM: Triangle Lemma} we prove that dyadic points on the base of any triangle constructed in the Half-Scales Lemma is close to an ``intermediate'' dyadic point on one of the sides of that triangle. In Lemma \ref{LEM: Iterated Triangle Lemma (Step 3)} we iterate Lemma \ref{LEM: Triangle Lemma} to prove that the dyadic point we started with on $L_0^0$, $x_k^j(L_0^0)$, is close to the midpoint of $L_{k-1}^{\frac{j-1}{2}}$, $x_1^1(L_{k-1}^{\frac{j-1}{2}})$, which we know is close to $z_k^j$ from the Half-Scales Lemma.   
		\begin{remark}
			Note that this notation is consistent with the Half-Scales Lemma, where we chose $x_1^1(L_0^0)$ to be the point such that $|x_1^1(L_0^0)-z_0^0|=\frac{1}{2^1}s_0$, where $z_0^0=x_0$, $y=z_0^1$, and $s_0=|z_0^1-z_0^0|$.
		\end{remark}

  \begin{proof}[Proof of Lemma \ref{LEM: Step 1}]
  We build this collection inductively. Let $x_0$, $y$, and $s_0$, be as in the statement of the Half-Scales Lemma. The case $k=1$ follows from the Half-Scales Lemma with 
  \begin{equation*}
      T_1^1=z_1^0z_1^1z_1^2, \quad L_0^0=\overline{z_0^0z_0^1}, \quad L_1^0=\overline{z_1^0z_1^1}, \quad L_1^1=\overline{z_1^1z_1^2},
  \end{equation*}
  where $z_1^0:=x_0$ and $z_1^2:=y$.
	
		\begin{figure}[H]
			\begin{center}
				\begin{tikzpicture}[scale=1.5]
					\def\angl{7}
					
					\coordinate (X1) at (0,0);			\coordinate (Xr) at (4,0); 
					\filldraw[black] (Xr) circle (2pt);
					\node[below right] at (Xr) {$y=z_0^1=z_1^2$};
					
					\coordinate (X0) at (-4,0); 
					\filldraw[black] (X0) circle (2pt);
					\node[below left] at (X0) {$x_0=z_0^0=z_1^0$};
					
					\draw (X0)--(Xr); 
					
					\coordinate (Z) at (0,2); 
					\filldraw[black] (Z) circle (2pt);
					\node[above=2] at (Z) {$z_1^1$};

					\draw (Z)--(X0);
					\draw (Z)--(Xr);
					
					\draw (X0) to node[midway, above] {$L_1^0$}(Z);
					\draw (Z) to node[midway, above] {$L_1^1$}(Xr);
					\draw (X0) to node[midway, below] {$L_0^0$}(Xr);
					\coordinate (T) at (0,1);
					\node at (T) {$T_1^1$};

				\end{tikzpicture}
				\caption{$k=1$}
			\end{center}
		\end{figure}
		\vspace{\baselineskip}
		
		Suppose for some $k$ there exists a collection of points, $\{z_k^j\}_{j=0}^{2^k}$, for which the conclusion of Lemma $\ref{LEM: Step 1}$ holds.

		\begin{figure}[H]
			\begin{center}
				\begin{tikzpicture}[scale=1.5]
					\def\angl{7}
					
					\coordinate (X1) at (0,0); 
					

					\coordinate (Xr) at (4,0); 
					\filldraw[black] (Xr) circle (2pt);
					\node[below=2] at (Xr) {$z_k^{j+1}=z_{k+1}^{2j+2}$};
					
					\coordinate (X0) at (-4,0); 
					\filldraw[black] (X0) circle (2pt);
					\node[below=2] at (X0) {$z_k^j=z_{k+1}^{2j}$};
					
					\draw (X0)--(Xr); 
					
					\coordinate (Z) at (0,2); 
					\filldraw[black] (Z) circle (2pt);
					\node[above=2] at (Z) {$z_{k+1}^{2j+1}$};

					\draw (Z)--(X0);
					\draw (Z)--(Xr);
					
     \filldraw[black] (0,0) circle (2pt);
					\node[below=2] at (0,0) {$x_1^1(L_k^j)$};
					\coordinate (T) at (0,1);
					\node at (T) {$T_{k+1}^{2j+1}$};
					
				\end{tikzpicture}
				\caption{\small{Relabeling the collection for $z_k^j=z_{k+1}^{2j}$ for $j=0,\hdots, 2^k$ }}
				\label{fig: relabeling}
			\end{center}
		\end{figure}
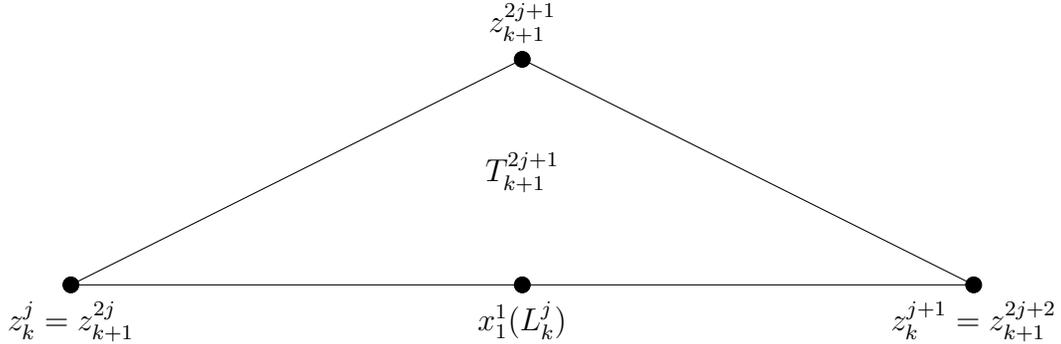
		
		We now show that we can find a collection $\{z_{k+1}^j\}_{j=1}^{2^{k+1}}\subset \Gamma$ of points so that Lemma \ref{LEM: Step 1} holds. We begin by relabeling $\{z_k^j\}_{j=0}^{2^k}$,  
		\begin{equation*}
			z_k^j= z_{k+1}^{2j} \qquad \text{for}\quad j=0,\hdots,2^k, 
		\end{equation*}
		see Figure \ref{fig: relabeling}, for example. Now we construct $z_{k+1}^{2j+1}$ for each $0\leq j\leq 2^k-1$. Apply the Half-Scales Lemma to $L_k^j$, the line segment with endpoints $z_{k+1}^{2j}$ and $z_{k+1}^{2j+2}$, to obtain a point $z_{k+1}^{2j+1}\in \Gamma$ such that $T_{k+1}^{2j+1}:=z_{k+1}^{2j}z_{k+1}^{2j+1}z_{k+1}^{2j+2}$, which denotes the triangle formed by $z_{k+1}^{2j}$, $z_{k+1}^{2j+2}$, and $z_{k+1}^{2j+1}$, is an isosceles triangle with side lengths
		\begin{equation*}
			|z_{k+1}^{2j}-z_{k+1}^{2j+1}|=|z_{k+1}^{2j+1}-z_{k+1}^{2j+2}|\quad \text{and} \quad \left|z_{k+1}^{2j}-z_{k+1}^{2j+2}\right|,
		\end{equation*}
		such that 
		\begin{align*}
			\begin{split}
				& \left||z_{k+1}^{2j}-z_{k+1}^{2j+1}|-\frac{\left|z_{k+1}^{2j}-z_{k+1}^{2j+2}\right|}{2}\right|<C_0^2\left|z_{k+1}^{2j}-z_{k+1}^{2j+2}\right|^{\alpha+1}\\
				\\
				& \left||z_{k+1}^{2j+1}-z_{k+1}^{2j+2}|-\frac{\left|z_{k+1}^{2j}-z_{k+1}^{2j+2}\right|}{2}\right|<C_0^2\left|z_{k+1}^{2j}-z_{k+1}^{2j+2}\right|^{\alpha+1}.
			\end{split}
		\end{align*}
		Moreover, 
		\begin{equation*}
			\measuredangle(L_{k+1}^{2j}L_k^j)=\measuredangle(L_{k+1}^{2j+1}L_{k}^j)<C_1\left|z_{k+1}^{2j}-z_{k+1}^{2j+2}\right|^{\alpha/2}.
		\end{equation*}
		\begin{figure}[H]
			\begin{center}
				\begin{tikzpicture}[scale=1.5]
					
					\coordinate (X1) at (0,1); 
					\node at (X1) {$T_{k}^{j+1}$};
					\coordinate (X2) at (0,0); 
					\node[below] at (X2) {$L_{k-1}^{\frac{j}{2}}$};

					\coordinate (Xr) at (4,0); 
					\filldraw[black] (Xr) circle (2pt);
					\node[below=2] at (Xr) {$z_{k-1}^{\frac{j+2}{2}}=z_k^{j+2}=z_{k+1}^{2j+4}$};
					
					\coordinate (w1) at (-2,1); 
					\node[above=2] at (w1) {\small{$T_{k+1}^{2j+1}$}};

					\coordinate (w2) at (2,1); 
					
					\coordinate (X0) at (-4,0); 
					\filldraw[black] (X0) circle (2pt);
					\node[below=2] at (X0) {$z_{k-1}^{\frac{j}{2}}=z_k^j=z_{k+1}^{2j}$};
					
					\draw (X0)--(Xr); 
					
					\coordinate (Z) at (0,2); 
					\filldraw[black] (Z) circle (2pt);
					\node[above=2] at (Z) {$z_k^{j+1}=z_{k+1}^{2j+2}$};
					
					\coordinate (Z21) at (-2.5,2); 
					\filldraw[black] (Z21) circle (2pt);
					\node[left=2] at (Z21) {$z_{k+1}^{2j+1}$};
						
					\draw (Z)--(X0);
					\draw (Z)--(Xr);
					\draw[dashed] (Z21)--(X0);
					\draw[dashed] (Z21)--(Z);

					\draw [dashed] (X0) -- (Z21) node [midway, above left=2.5] {$L_{k+1}^{2j}$};
                    \draw [dashed] (Z21) -- (Z) node [midway, above=2] {$L_{k+1}^{2j+1}$};
                    \node[below=2] at (w1) {$L_{k}^j$};
                    \node[below=2] at (w2) {$L_{k}^{j+1}$};
				\end{tikzpicture}
				\caption{Construction of $z_{k+1}^{2j+1}$}
				\label{fig: two generations}
			\end{center}
		\end{figure}
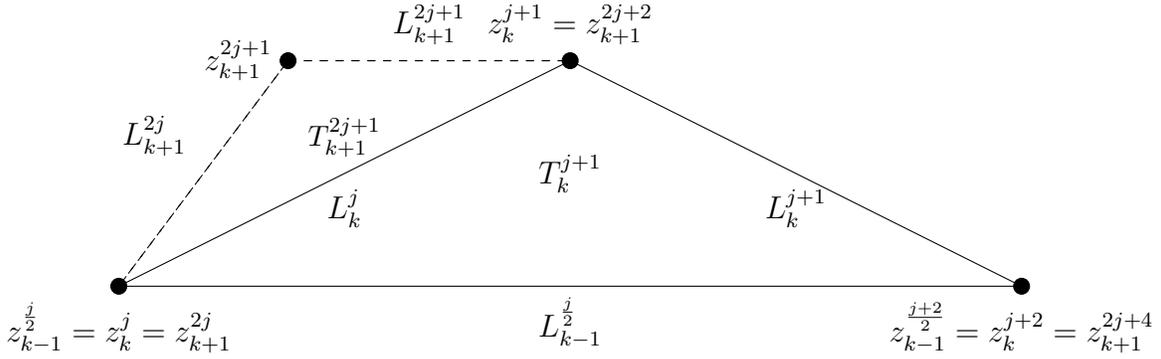
		Since \eqref{eq: vertex of triangle always close to midpoint} follows directly from the Half-Scales Lemma, the lemma is complete.
  \end{proof}
		
	It may seem worrisome that the bounds in the previous lemma depend on $j$, but for a fixed $k$, the $\left|z_{k}^{j-1}-z_{k}^{j} \right|$ are uniformly bounded in $j$.
		\begin{remark}\label{rmk: sk's} 
			Observe that at generation $k$ of the construction, there are $2^{k-1}$ applications of the Half-Scales Lemma. In generation $k$, for each $1\leq j\leq 2^{k}-1$ odd, there are possibly different values of $\left|z_{k}^{j-1}-z_{k}^{j} \right|=\left|z_{k}^{j}-z_{k}^{j+1} \right|$. Let $C_0^2s_0^{\alpha}<1/4$. Then, for $k,j=1$ by \eqref{e: radius for half-scales}
            \begin{equation}\label{eq: rmk 4 k=1}
                |z_1^1-z_1^0|<s_0\left(\frac{1}{2}+C_0^2s_0^{\alpha}\right)<\frac{3}{4}s_0.
            \end{equation}
            Let $k=2$. Then for $j=1$, from \eqref{eq: side length relationship on triangle} and \eqref{eq: rmk 4 k=1},
            \begin{equation*}
                |z_2^0-z_2^1|<|z_2^0-z_2^2|\left(\frac{1}{2} +C_0^2|z_2^0-z_1^1|^{\alpha}\right)< \frac{3}{4}s_0\left(\frac{1}{2} +C_0^2s_0^{\alpha}\right)<s_0\left(\frac{3}{4}\right)^2.
            \end{equation*}
            From \eqref{eq: side length relationship on triangle}, \eqref{e: the side lengths are equal}, and \eqref{eq: rmk 4 k=1}, 
            \begin{equation*}
                |z_2^2-z_2^3|<s_0\left(\frac{3}{4}\right)^2.
            \end{equation*}
            Continuing inductively gives the following uniform bound in $j$, 
			\begin{equation}\label{e: uniform bound s_k}
				\left|z_{k}^{j-1}-z_{k}^{j} \right|\leq s_0\left(\frac{3}{4}\right)^k \qquad \text{for all}\quad 1\leq j\leq 2^{k-1}\, \text{odd}.
			\end{equation}
		\end{remark}

		\begin{lemma}\label{LEM: Triangle Lemma} Assume the hypotheses of Lemma \ref{LEM: Main Lemma 1}. For $k\in \mathbb{N}$ and each $1\leq j\leq 2^k-1$ odd, let $T_{k}^j:= z_k^{j-1}z_k^{j}z_k^{j+1}$ denote the isosceles triangle from Lemma \ref{LEM: Step 1}. Take the dyadic collection of points on each side of $T_k^j$: 
			\begin{equation*}
				\{x_{m-1}^n(L_k^{j-1})\}_{n=1}^{2^{m-1}-1},\quad \{x_{m-1}^n(L_k^{j})\}_{n=1}^{2^{m-1}-1}, \quad \text{and}\quad \{x_m^n(L_{k-1}^{\frac{j-1}{2}})\}_{n=1}^{2^m-1}. 
			\end{equation*}
			
			Fix $m\in \mathbb{N}$. Then for each $1\leq n\leq 2^m-1$,
			\begin{align}\label{eq:dyadic points close one step}
				\begin{split}
                &|z_k^j-x_m^{2^{m-1}}(L_{k-1}^{\frac{j-1}{2}})|<C_0s_0^{\alpha/2+1}\left(\frac{3}{4} \right)^{(k-1)(\alpha/2+1)}\, \text{if} \quad n=2^{m-1}, \\
					&|x_m^n(L_{k-1}^{\frac{j-1}{2}})-x_{m-1}^{n}(L_{k}^{j-1})|<C_1s_0^{\alpha/2+1}\left(\frac{3}{4}\right)^{k+(k-1)\frac{\alpha}{2}}+C_0^2s_0^{\alpha+1}\left(\frac{3}{4}\right)^{(k-1)(\alpha+1)}\, \text{if} \quad n<2^{m-1}, \\
					\\
					&|x_m^n(L_{k-1}^{\frac{j-1}{2}})-x_{m-1}^{d}(L_{k+1}^{j})|<C_1s_0^{\alpha/2+1}\left(\frac{3}{4}\right)^{k+(k-1)\frac{\alpha}{2}}+C_0^2s_0^{\alpha+1}\left(\frac{3}{4}\right)^{(k-1)(\alpha+1)}\, \text{if} \,\, 2^{m-1}<n<2^{m},
				\end{split}
			\end{align}
		where $d=n-2^{m-1}$. 
	\end{lemma}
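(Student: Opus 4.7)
The plan is to reduce \eqref{eq:dyadic points close one step} to elementary planar trigonometry inside the isosceles triangle $T_k^j$, using only the angle bound \eqref{eq: angles for triangle } and the length-defect bound \eqref{eq: side length relationship on triangle} from Lemma \ref{LEM: Step 1}, together with the uniform geometric-decay estimate \eqref{e: uniform bound s_k} from Remark \ref{rmk: sk's}. No further geometric information about $\Gamma$ is required.

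The midpoint case $n = 2^{m-1}$ is immediate, since $x_m^{2^{m-1}}(L_{k-1}^{\frac{j-1}{2}})$ is by definition the midpoint of the base. The estimate \eqref{eq: vertex of triangle always close to midpoint} already yields $|z_k^j - x_m^{2^{m-1}}(L_{k-1}^{\frac{j-1}{2}})| < C_0 |z_k^{j-1} - z_k^{j+1}|^{\alpha/2+1}$, and substituting $|z_k^{j-1} - z_k^{j+1}| \leq s_0(3/4)^{k-1}$ from \eqref{e: uniform bound s_k} produces the asserted bound.

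For $n < 2^{m-1}$, I would place $z_k^{j-1}$ at the origin with the base $L_{k-1}^{\frac{j-1}{2}}$ along the positive $x$-axis. Writing $b := |z_k^{j-1} - z_k^{j+1}|$, $\ell := |z_k^{j-1} - z_k^j|$, and $\theta := \measuredangle(L_k^{j-1}, L_{k-1}^{\frac{j-1}{2}})$, one then has $x_{m-1}^n(L_k^{j-1}) = \tfrac{n}{2^{m-1}}\ell (\cos\theta, \sin\theta)$ and $x_m^n(L_{k-1}^{\frac{j-1}{2}}) = (\tfrac{n}{2^m} b,\, 0)$. The $y$-component of the difference is at most $\ell\theta \leq C_1 \ell\, b^{\alpha/2}$ by \eqref{eq: angles for triangle }, and inserting $\ell \leq s_0(3/4)^k$ and $b \leq s_0(3/4)^{k-1}$ gives the first stated term $C_1 s_0^{\alpha/2+1}(3/4)^{k+(k-1)\alpha/2}$. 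The $x$-component equals $\tfrac{n}{2^m}|2\ell\cos\theta - b|$; writing $2\ell\cos\theta - b = 2\ell(\cos\theta - 1) + (2\ell - b)$ and combining $|1 - \cos\theta| \leq \theta^2/2$ with \eqref{eq: side length relationship on triangle} bounds this by a constant multiple of $b^{\alpha+1} \leq s_0^{\alpha+1}(3/4)^{(k-1)(\alpha+1)}$, yielding the second stated term.

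The case $n > 2^{m-1}$ is handled by the symmetric coordinate setup anchored at $z_k^{j+1}$, using the (equal) base angle $\measuredangle(L_k^j, L_{k-1}^{\frac{j-1}{2}}) \leq C_1 b^{\alpha/2}$ and the index $d = n - 2^{m-1}$ on $L_k^j$; after reflection, the computation is identical to the previous case with $n$ replaced by $2^m - n$. I expect the main obstacle to be purely bookkeeping: simultaneously tracking the four quantities $b$, $\ell \approx b/2$, the angle $\theta$, and the dyadic fractions $n/2^m$ and $d/2^{m-1}$, so that the angular error produces exactly the first term in \eqref{eq:dyadic points close one step} and the length-defect error produces exactly the second.
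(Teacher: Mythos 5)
Your proof is correct and its overall structure — bounding the discrepancy by separating an angular contribution (controlled via \eqref{eq: angles for triangle }) from a length-defect contribution (controlled via \eqref{eq: side length relationship on triangle}), then inserting the geometric decay from \eqref{e: uniform bound s_k} — matches the paper's strategy. The technical decomposition you use differs from the paper's: you compare the two dyadic points in Cartesian coordinates anchored at $z_k^{j-1}$, whereas the paper introduces an intermediate point $v_m^n(L_{k-1}^{\frac{j-1}{2}})$ on the base at the \emph{same radial distance} $\frac{n}{2^{m-1}}s_k^j$ from $z_k^{j-1}$, bounds $|x_{m-1}^n(L_k^{j-1})-v_m^n|$ by the subtended arc length $\frac{n}{2^{m-1}}s_k^j\theta_k^j$, and then corrects $v_m^n$ to $x_m^n$ purely along the base using \eqref{eq: side length relationship on triangle}. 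Both routes work; the arc-chord comparison absorbs the $1-\cos\theta$ defect automatically, while your coordinate decomposition produces an extra $\frac{n}{2^{m-1}}\ell(1-\cos\theta)\lesssim C_1^2\ell\,b^\alpha$ term that you must estimate separately.

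Two small remarks. First, since $T_k^j$ is isosceles with $|z_k^{j-1}-z_k^j|=|z_k^j-z_k^{j+1}|$ and equal base angles, the apex sits over the midpoint of the base, so in fact $\ell\cos\theta = b/2$ exactly. Under your coordinates the $x$-components of $x_{m-1}^n(L_k^{j-1})$ and $x_m^n(L_{k-1}^{\frac{j-1}{2}})$ coincide, making the $x$-discrepancy identically zero; your Taylor-expansion estimate is therefore unnecessary and your decomposition is even sharper than you claimed. Second, as written (without that observation) your argument produces a constant $\tfrac{C_1^2}{2}+C_0^2$ in the $b^{\alpha+1}$ term rather than the literal $C_0^2$ in \eqref{eq:dyadic points close one step}; this is harmless, since the lemma is only used downstream (Lemma \ref{LEM: Iterated Triangle Lemma (Step 3)}) with a constant allowed to depend on $C_0$ and $C_1$, but worth flagging if one wants to reproduce the stated inequality verbatim.
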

	\begin{figure}[H]
		\begin{center}
			\begin{tikzpicture}[scale=1]
				
				\coordinate (X1) at (0,0); 
				\filldraw[black] (X1) circle (2pt);
				\node[below=2] at (X1) {$x_1^1(L_{k-1}^{\frac{j-1}{2}})$};
				
				\coordinate (Xr) at (4,0); 
				\filldraw[black] (Xr) circle (2pt);
				\node[below right] at (Xr) {$z_k^{j+1}=z_{k-1}^{\frac{j+1}{2}}$};
				
				\coordinate (w1) at (-2,1); 
				\filldraw[black] (w1) circle (2pt);
				\node[above left=2] at (w1) {$x_1^1(L_k^{j-1})$};
				
				\coordinate (w2) at (2,1); 
				\filldraw[black] (w2) circle (2pt);
				\node[above right=2] at (w2) {$x_1^1(L_k^{j})$};

				\coordinate (X0) at (-4,0); 
				\filldraw[black] (X0) circle (2pt);
				\node[below left] at (X0) {$z_k^{j-1}=z_{k-1}^{\frac{j-1}{2}}$};
				
				\draw (X0)--(Xr); 
				
				\coordinate (Z) at (0,2); 
				\filldraw[black] (Z) circle (2pt);
				\node[above=2] at (Z) {$z_k^j$};

				\coordinate (P) at (0,1); 
				\node[] at (P) {$T_k^j$};

				\coordinate (X21) at (-2,0); 
				\filldraw[black] (X21) circle (2pt);
				\node[below=2] at (X21) {$x_2^1(L_{k-1}^{\frac{j-1}{2}})$};
				
				\coordinate (X23) at (2,0); 
				\filldraw[black] (X23) circle (2pt);
				\node[below=2] at (X23) {$x_2^3(L_{k-1}^{\frac{j-1}{2}})$};

				\draw (Z)--(X0);
				\draw (Z)--(Xr);

			\end{tikzpicture}
		\end{center}
        \caption{}
	\end{figure}

 \begin{proof}[Proof of Lemma \ref{LEM: Triangle Lemma}]
 Fix $k$ and $1\leq j\leq 2^k-1$ odd. Recall, the relabeling from Lemma \ref{LEM: Step 1}, $z_{k-1}^{\frac{j-1}{2}}=z_k^{j-1}$ and $z_{k-1}^{\frac{j+1}{2}}=z_k^{j+1}$. Let 
	\begin{equation*}
		s_k^j=\left|z_k^{j-1}-z_k^j\right|=\left|z_k^{j}-z_k^{j+1}\right| \qquad \text{and} \qquad s_{k-1}^j:=\left|z_k^{j-1}-z_k^{j+1}\right|.
	\end{equation*}
	Let $\theta_k^j=\measuredangle(L_k^{j-1}L_{k-1}^{\frac{j-1}{2}})=\measuredangle(L_k^{j}L_{k-1}^{\frac{j-1}{2}})$.
Fix $m\in \mathbb{N}$. First suppose that $n=2^{m-1}$. Then from \eqref{eq: vertex of triangle always close to midpoint} and \eqref{e: uniform bound s_k}, 
    \begin{equation*}
	|z_k^j-x_m^{2^{m-1}}(L_{k-1}^{\frac{j-1}{2}})|<C_0\left|z_{k}^{j-1}-z_{k}^{j+1} \right|^{\alpha/2+1}=C_0\left|z_{k-1}^{\frac{j-1}{2}}-z_{k-1}^{\frac{j+1}{2}} \right|^{\alpha/2+1}<C_0s_0^{\alpha/2+1}\left(\frac{3}{4}\right)^{(k-1)(\alpha/2+1)}.
    \end{equation*}
    
		\begin{figure}[H]
		\begin{center}
			\includegraphics[height=6.2cm]{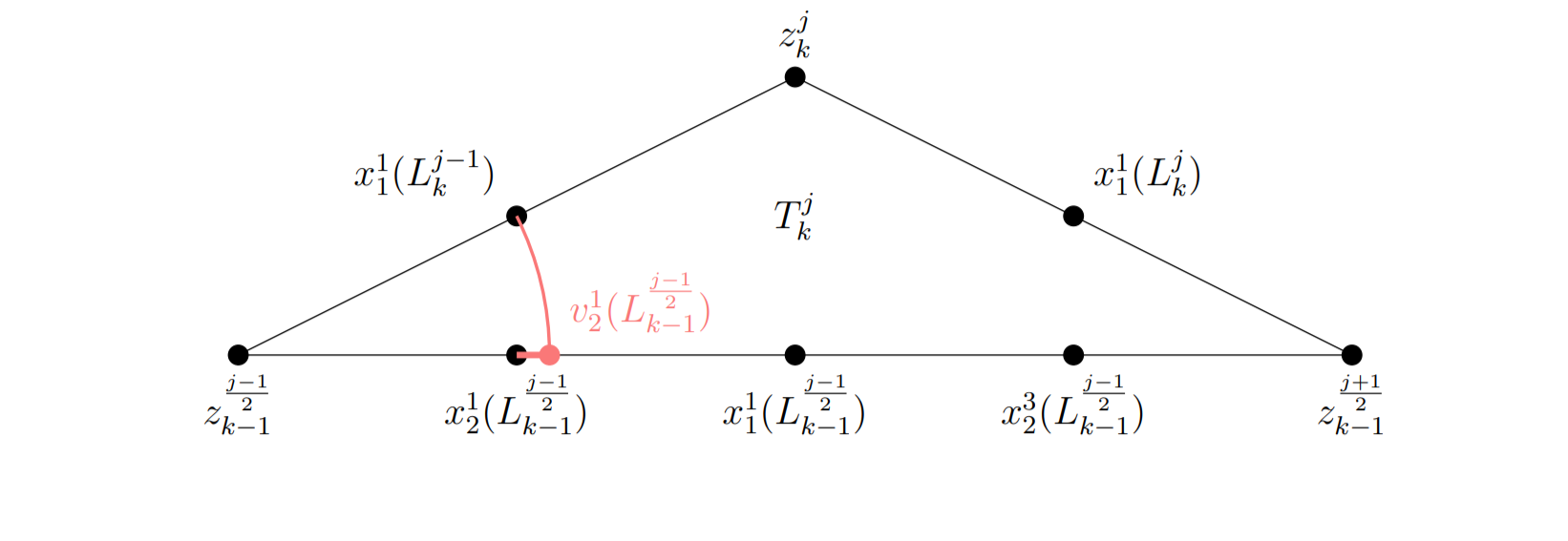}
			\caption{The case where $n<2^m$}
			\label{fig: Triangle lemma n<2^m}
			
		\end{center}
	\end{figure}
    Suppose now that $n<2^{m-1}$. Let $x_{m-1}^n(L_k^{j-1})$ be as in \eqref{e: dyadic points} and let $v_{m}^n(L_{k-1}^{\frac{j-1}{2}})$ be the point in $L_{k-1}^{\frac{j-1}{2}}$ such that 
	\begin{equation*}
		\left|v_{m}^n(L_{k-1}^{\frac{j-1}{2}})-z_k^{j-1}\right|=\frac{n}{2^{m-1}}s_k^j.
	\end{equation*}
	
	See Figure \ref{fig: Triangle lemma n<2^m}. Then, 
	\begin{align*}
		|x_{m-1}^n(L_{k}^{j-1})-v_{m}^n(L_{k-1}^{\frac{j-1}{2}})|&\leq \mathcal{H}^1(\arc_{z_k^{j-1}, \frac{n}{2^{m-1}}s_k^j}(x_{m-1}^n(L_{k}^{j-1}),v_{m}^n(L_{k-1}^{\frac{j-1}{2}})))\\
		&< \frac{n}{2^{m-1}}s_{k}^j\theta_{k}^j\\
		&< \frac{n}{2^{m-1}}s_{k}^j(C_1(s_{k-1}^j)^{\alpha/2})\\
		&=C_1\frac{n}{2^{m-1}}s_{k}^j(s_{k-1}^j)^{\alpha/2}, 
	\end{align*}
	where the second to last inequality follows from (\ref{eq: angles for triangle }).
	Furthermore, 
	\begin{align*}
		|v_{m}^n(L_{k-1}^{\frac{j-1}{2}})-x_{m}^n(L_{k-1}^{\frac{j-1}{2}})|&=\left|\frac{n}{2^{m-1}}s_{k}^j-\frac{n}{2^{m}}s_{k-1}^j\right|\\
		&=\frac{n}{2^{m-1}}\left|s_{k}^j-\frac{s_{k-1}^j}{2} \right|\\
		&<\frac{n}{2^{m-1}}C_0^2(s_{k-1}^j)^{\alpha+1}, 
	\end{align*}
	where the last inequality follows from (\ref{eq: side length relationship on triangle}). Thus from the triangle inequality we have
	\begin{equation*}
		|x_{m}^n(L_{k-1}^{\frac{j-1}{2}})-x_{m-1}^n(L_{k}^{j-1})|<C_1\frac{n}{2^{m-1}}s_{k}^j(s_{k-1}^j)^{\alpha/2}+\frac{n}{2^{m-1}}C_0^2(s_{k-1}^j)^{\alpha+1},
	\end{equation*}
	which together with \eqref{e: uniform bound s_k} gives (\ref{eq:dyadic points close one step}). Observe that the case where $n>2^{m-1}$ is similar, but we include the proof below for completeness.  \\
	
    	\begin{figure}[H]
		\begin{center}
			\includegraphics[height=6.2cm]{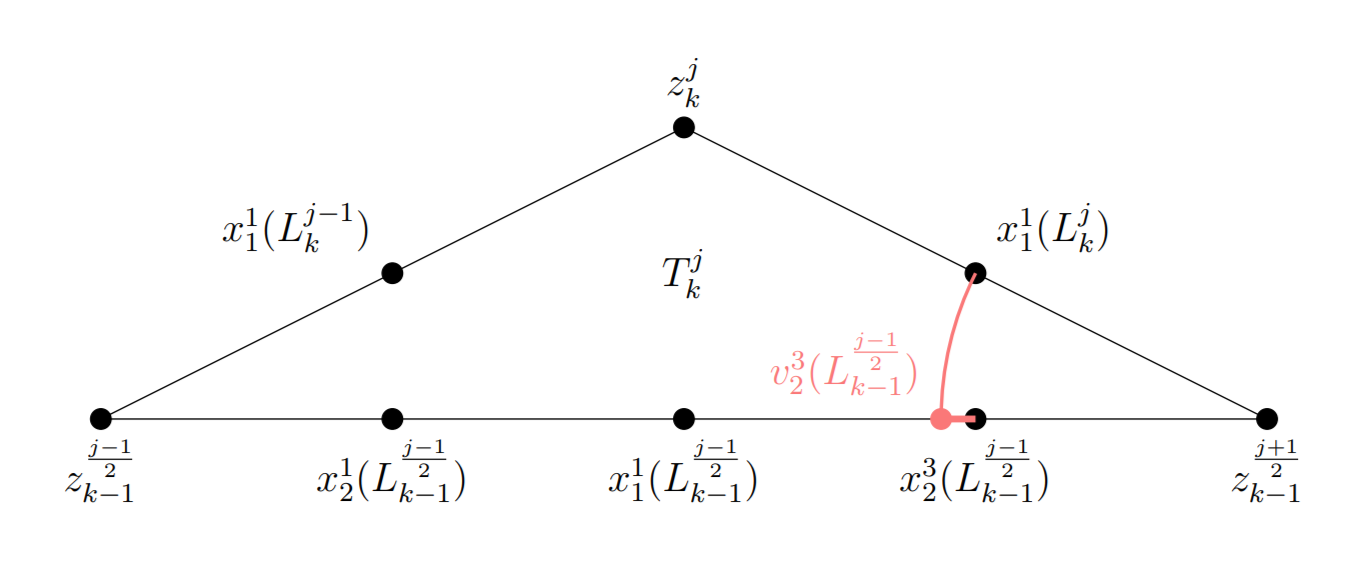}
			\caption{The case where $n>2^m$}
			\label{fig: Triangle lemma n>2^m}
			
		\end{center}
	\end{figure}
	Suppose that $n>2^{m-1}$, and let $d=n-2^{m-1}$. Let $x_{m-1}^d(L_k^j)$ be as in \eqref{e: dyadic points} and let $v_m^{d}(L_{k-1}^{\frac{j-1}{2}})$ be the point in $L_{k-1}^{\frac{j-1}{2}}$ such that 
	\begin{equation*}
		\left|v_m^{d}(L_{k-1}^{\frac{j-1}{2}})-z_k^{j+1} \right|=\left(1-\frac{d}{2^{m-1}}\right)s_k^j,
	\end{equation*} 
	and observe that
	\begin{equation*}
		\left|x_{m-1}^d(L_k^j)-z_k^{j+1}\right|=\left(1-\frac{d}{2^{m-1}}\right)s_k^j. 
	\end{equation*}
	
	Then, 
	\begin{align*}
		|x_{m-1}^{d}(L_{k}^{j})-v_m^{d}(L_{k-1}^{\frac{j-1}{2}})|&\leq \mathcal{H}^1(\arc_{z_k^{j+1}, (1-\frac{d}{2^{m-1}})s_k^j)}(x_{m-1}^{d}(L_{k}^{j}),v_m^{d}(L_{k-1}^{\frac{j-1}{2}})))\\
		&< (1-\frac{d}{2^{m-1}})s_{k}^j\theta_{k}^j\\
		&< (1-\frac{d}{2^{m-1}})s_{k}^j(C_1(s_{k-1}^j)^{\alpha/2})\\
		&=C_1(1-\frac{d}{2^{m-1}})s_{k}^j(s_{k-1}^j)^{\alpha/2}, 
	\end{align*}
	where the second to last inequality follows from (\ref{eq: angles for triangle }).
	Furthermore, 
	\begin{align*}
		|v_m^{d}(L_{k-1}^{\frac{j-1}{2}})-x_{m}^{n}(L_{k-1}^{\frac{j-1}{2}})|
		&=\left(1-\frac{d}{2^{m-1}}\right)s_k^j-\left(\frac{1}{2}-\frac{d}{2^{m}}\right)s_{k-1}^j\\
		&=\left(1-\frac{d}{2^{m-1}}\right)\left|s_{k}^j-\frac{s_{k-1}^j}{2} \right|\\
		&<\left(1-\frac{d}{2^{m-1}}\right)C_0^2(s_{k-1}^j)^{\alpha+1}, 
	\end{align*}
	where the last inequality follows from (\ref{eq: side length relationship on triangle}). Thus from the triangle inequality we have
	\begin{equation*}
		|x_{m}^{n}(L_{k-1}^{\frac{j-1}{2}})-x_{m-1}^{d}(L_{k}^{j})|<C_1\left(1-\frac{d}{2^{m-1}}\right)s_{k}^j(s_{k-1}^j)^{\alpha/2}+\left(1-\frac{d}{2^{m-1}}\right)C_0^2(s_{k-1}^j)^{\alpha+1},
	\end{equation*}
	which again with \eqref{e: uniform bound s_k} gives (\ref{eq:dyadic points close one step}). Thus, Lemma \ref{LEM: Triangle Lemma} is shown. 
     
 \end{proof}

\begin{theorem}[Dyadic Scales Theorem]\label{LEM: Dyadic Scales Lemma}
			Under the hypotheses of Lemma \ref{LEM: Main Lemma 1}, there exists $\rho_0\leq r_0$ such that for $x_0\in \Gamma$, $s_0\leq \rho_0$, and $m\in \mathbb{N}$ the following holds. For each $1\leq n\leq 2^m-1$ there exists a point $z_m^n\in \Gamma$ such that 
			\begin{equation}\label{e: goal of lemma 1}
				|x_m^n(L_0^0)-z_m^n|\leq Cs_0^{\alpha/2+1}, 
			\end{equation}
			where $L_0^0$ is as in the Half-Scales Lemma and $C$ is a constant depending only on $C_0$ and $C_1$. \\
		\end{theorem}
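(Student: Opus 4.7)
The plan is to iterate Lemma \ref{LEM: Triangle Lemma}, descending through the hierarchy of isosceles triangles $\{T_k^j\}$ produced by Lemma \ref{LEM: Step 1}, until the descent lands at a vertex $z_k^{j^*}\in\Gamma$, which we take to be $z_m^n$. Fix $x_0\in\Gamma$, $s_0\leq\rho_0$, $m\in\mathbb{N}$, and $1\leq n\leq 2^m-1$.

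First I would apply Lemma \ref{LEM: Triangle Lemma} to the triangle $T_1^1$ (whose base is $L_0^0$). The three cases in \eqref{eq:dyadic points close one step} split according to whether $n=2^{m-1}$, $n<2^{m-1}$, or $n>2^{m-1}$. In the midpoint case, $x_m^n(L_0^0)$ is already within $C_0 s_0^{\alpha/2+1}$ of the apex $z_1^1$, and we set $z_m^n=z_1^1$ and stop. Otherwise, $x_m^n(L_0^0)$ lies within the quantitatively controlled distance from a dyadic point $x_{m-1}^{n'}(L_1^{\ell})$ on one of the two sides of $T_1^1$, with denominator reduced from $2^m$ to $2^{m-1}$. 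Next I would use the fact, visible from the construction in Lemma \ref{LEM: Step 1}, that each side $L_1^{\ell}$ is itself the base of a triangle $T_2^{j}$ at the next generation; hence Lemma \ref{LEM: Triangle Lemma} can be applied again, now with $k=2$, to $x_{m-1}^{n'}(L_1^{\ell})$.

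Iterating, after $k-1$ non-terminating steps we arrive at a dyadic point on some side $L_{k-1}^{j'}$ with denominator $2^{m-k+1}$, and we apply Lemma \ref{LEM: Triangle Lemma} to the triangle $T_k^{j^{**}}$ sitting on $L_{k-1}^{j'}$. Because the dyadic denominator strictly decreases at each step, the midpoint case must be reached in at most $m-1$ iterations; so the descent terminates at some generation $k^{*}\leq m$, and we set $z_m^n:=z_{k^{*}}^{j^{*}}$ for the corresponding apex. The path of descent is then traced back through the triangle inequality, giving
\begin{equation*}
|x_m^n(L_0^0)-z_m^n|\;\leq\;\sum_{k=1}^{k^{*}-1}E_k^{\mathrm{side}}+E_{k^{*}}^{\mathrm{mid}},
\end{equation*}
where $E_k^{\mathrm{side}}$ is the non-midpoint bound and $E_{k^{*}}^{\mathrm{mid}}$ is the midpoint bound from \eqref{eq:dyadic points close one step}.

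The final task is to sum this series. Each $E_k^{\mathrm{side}}$ is bounded by $C_1 s_0^{\alpha/2+1}(3/4)^{k+(k-1)\alpha/2}+C_0^2 s_0^{\alpha+1}(3/4)^{(k-1)(\alpha+1)}$, and $E_{k^{*}}^{\mathrm{mid}}\leq C_0 s_0^{\alpha/2+1}(3/4)^{(k^{*}-1)(\alpha/2+1)}$. Since $\alpha>0$, all three expressions are terms of convergent geometric series with ratio $(3/4)^{1+\alpha/2}$ or $(3/4)^{1+\alpha}$, both strictly less than one, whose sums are controlled independently of $m$. Using $s_0\leq 1$ so that $s_0^{\alpha+1}\leq s_0^{\alpha/2+1}$, the total error is bounded by $C s_0^{\alpha/2+1}$ for a constant $C$ depending only on $C_0$, $C_1$, and $\alpha$, which gives \eqref{e: goal of lemma 1}.

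I expect the main difficulty to be bookkeeping rather than estimation: at each step one must correctly identify which triangle $T_k^{j^{**}}$ sits on the currently active base $L_{k-1}^{j'}$, determine which side $L_k^{\ell}$ the new dyadic point lies on, and verify that the new dyadic index falls in the range $\{1,\ldots,2^{m-k}-1\}$ so that Lemma \ref{LEM: Triangle Lemma} applies cleanly at the next generation. Once this inductive labeling is pinned down, the summation above is a direct geometric-series calculation.
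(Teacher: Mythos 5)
Your proposal follows the same descent-through-triangles strategy the paper uses: iterating Lemma \ref{LEM: Triangle Lemma} down the hierarchy from Lemma \ref{LEM: Step 1}, tracking which side of each triangle the dyadic point falls on, terminating when you hit a midpoint, and summing the resulting geometric series. The paper packages the iteration as Lemma \ref{LEM: Iterated Triangle Lemma (Step 3)}, makes the bookkeeping explicit via the address function $S(k,n,m)$ and the selector $\lambda(m,i)$, and restricts to $n$ odd (which forces exactly $m-1$ descent steps and yields the clean identity $S(m-1,n,m)=\tfrac{n-1}{2}$), whereas you allow early termination for general $n$ — but since even $n$ reduces to an odd index at a coarser scale, the two are the same argument.
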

 
	To show Theorem \ref{LEM: Dyadic Scales Lemma} note that from the Half-Scales Lemma, the point $z_k^j$ is very close to $x_1^1(L_{k-1}^{\frac{j-1}{2}})$. So in order to show \eqref{e: goal of lemma 1}, a necessary next step is to iterate Lemma \ref{LEM: Triangle Lemma} to show that
	\begin{equation*}
		|x_k^j(L_0^0)-x_1^1(L_{k-1}^{\frac{j-1}{2}})|<C_2s_0^{\frac{\alpha}{2}+1}, 
	\end{equation*}
	for some constant $C_2$ depending only on $C_0$ and $C_1$. This means that for each dyadic point in $L_0^0$, we need to keep track of the family of dyadic points that satisfy \eqref{eq:dyadic points close one step} in each successive application of Lemma \ref{LEM: Triangle Lemma}, if we start with $x_k^j(L_0^0)$.  In other words, we need to track which side of the triangle contains the closest dyadic point in each iteration of Lemma \ref{LEM: Triangle Lemma}, to ensure that $x_k^j(L_0^0)$'s closest point after $k-1$ iterations is in fact on $L_{k-1}^{\frac{j-1}{2}}$. To do this, we need to introduce new notation.
	
	Denote $[n]_{m}:=n \mod(2^{m})$ and define 
	\begin{equation*}
		S(k,n,m):=\sum_{i=0}^{k-1}2^{k-(i+1)}\lambda(m,i),
	\end{equation*}
	where 
	\begin{equation*}
		\lambda(m,i)=\begin{cases} 
			0 &\text{if}\quad n\mod(2^{m-i})\leq 2^{m-(i+1)} \\
			1 &\text{if}\quad n\mod(2^{m-i})> 2^{m-(i+1)} .
		\end{cases}
	\end{equation*}
	The function $\lambda(m,i)=0$ if $x_{m-i}^{[n]_{m-i}}$ is closest to the left side of the triangle and $\lambda(m,i)=1$ if $x_{m-i}^{[n]_{m-i}}$ is closest to the right side of the triangle, and thus $S(k,n,m)$ gives the ``line address'' of the closest dyadic point to $x_m^n(L_0^0)$ after $k$ iterations.

	\begin{lemma}\label{LEM: Iterated Triangle Lemma (Step 3)}
		Assume the hypotheses of Lemma \ref{LEM: Main Lemma 1}. Fix $m\in \mathbb{N}$ and $1\leq n\leq 2^m-1$. Consider the point $x_m^n(L_0^0)$ in the dyadic collection $\{x_m^n(L_0^0)\}_{n=1}^{2^m-1}$ on $L_0^0$ such that $n$ is odd. For $1\leq k\leq m-1$, we have 
		\begin{equation*}
			\left|x_m^n(L_0^0)-x_{m-k}^{[n]_{m-k}}\left(L_k^{S(k,n,m)}\right)\right|\leq C_2s_0^{\alpha/2+1},
		\end{equation*}
		where $k$ is the number of iterations. \\
		In particular, 
		\begin{equation*}
			\left|x_m^n(L_0^0)-x_{1}^{1}\left(L_{m-1}^{S(m-1,n,m)}\right)\right|\leq C_2s_0^{\alpha/2+1}.
		\end{equation*}
	\end{lemma}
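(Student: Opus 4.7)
The plan is to prove the result by induction on $k$, iterating Lemma \ref{LEM: Triangle Lemma} one generation at a time, with the address $S(k,n,m)$ designed precisely to record which leg is selected at each step and $[n]_{m-k}$ to record the dyadic index on that leg.

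Base case ($k=1$): Because $n$ is odd and $m\geq 2$, $n\neq 2^{m-1}$, so $x_m^n(L_0^0)$ is not the midpoint of $L_0^0$. Apply Lemma \ref{LEM: Triangle Lemma} to the triangle $T_1^1$, whose base is $L_0^0$ and whose legs are $L_1^0, L_1^1$: if $n < 2^{m-1}$, then $\lambda(m,0)=0$ and $S(1,n,m)=0$, so $x_m^n(L_0^0)$ lies within $C_1 s_0^{\alpha/2+1}(3/4)+C_0^2 s_0^{\alpha+1}$ of $x_{m-1}^{n}(L_1^0)$; if $n > 2^{m-1}$, then $\lambda(m,0)=1$, $S(1,n,m)=1$, and the analogous side-case bound in Lemma \ref{LEM: Triangle Lemma} places it near $x_{m-1}^{n-2^{m-1}}(L_1^1)$. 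In either case the new dyadic index is $[n]_{m-1}$ and the target leg is $L_1^{S(1,n,m)}$, as claimed.

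Inductive step: Suppose the bound holds at generation $k$, so we have landed at $p_k:=x_{m-k}^{[n]_{m-k}}(L_k^{S(k,n,m)})$. By the construction in Lemma \ref{LEM: Step 1}, the segment $L_k^{S(k,n,m)}$ is simultaneously a leg of its parent triangle and the base of $T_{k+1}^{2S(k,n,m)+1}$, whose legs are $L_{k+1}^{2S(k,n,m)}$ and $L_{k+1}^{2S(k,n,m)+1}$. Since $n$ is odd and $k \leq m-2$, the residue $[n]_{m-k}$ is odd while $2^{m-k-1}$ is even, so $[n]_{m-k}\neq 2^{m-k-1}$, and we are in a side case of Lemma \ref{LEM: Triangle Lemma} at generation $k+1$ (the apex case never triggers). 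The side selected is determined by $\lambda(m,k)\in\{0,1\}$: namely $\lambda(m,k)=0$ when $[n]_{m-k}<2^{m-k-1}$ and $\lambda(m,k)=1$ otherwise. Checking against the recursion $S(k+1,n,m)=2S(k,n,m)+\lambda(m,k)$, the new leg is exactly $L_{k+1}^{S(k+1,n,m)}$ and the new dyadic index on it is $[n]_{m-k-1}$. The incremental displacement from $p_k$ to $p_{k+1}$ is at most
$$C_1 s_0^{\alpha/2+1}\Bigl(\tfrac{3}{4}\Bigr)^{(k+1)+k\alpha/2} + C_0^2 s_0^{\alpha+1}\Bigl(\tfrac{3}{4}\Bigr)^{k(\alpha+1)}$$
by Lemma \ref{LEM: Triangle Lemma} applied with generation $k+1$, combined with the uniform side-length bound \eqref{e: uniform bound s_k}.

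Telescoping and conclusion: Summing the incremental displacements over $i=1,\ldots,k$ and extending to an infinite geometric series yields
$$|x_m^n(L_0^0) - x_{m-k}^{[n]_{m-k}}(L_k^{S(k,n,m)})| \leq \sum_{i=1}^{\infty}\left[ C_1 s_0^{\alpha/2+1}\Bigl(\tfrac{3}{4}\Bigr)^{i+(i-1)\alpha/2} + C_0^2 s_0^{\alpha+1}\Bigl(\tfrac{3}{4}\Bigr)^{(i-1)(\alpha+1)} \right],$$
and both series converge; using $s_0\leq 1$ to absorb the $s_0^{\alpha+1}$ factor into $s_0^{\alpha/2+1}$, the right-hand side is bounded by a constant $C_2 = C_2(C_0,C_1,\alpha)$ times $s_0^{\alpha/2+1}$. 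Specializing to $k = m-1$ gives the ``In particular'' statement, since $[n]_1 = 1$ for odd $n$. The principal obstacle here is purely combinatorial: one must carefully verify that the binary address defined by $S(k,n,m)$ and $\lambda(m,i)$ tracks exactly the same side that Lemma \ref{LEM: Triangle Lemma} forces us onto, and that oddness of $n$ propagates through all the residues $[n]_{m-k}$ so we never land on the triangle apex and never incur the (larger) apex error term.
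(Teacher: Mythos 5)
Your proof is correct and follows essentially the same route as the paper's: induction on $k$, applying Lemma \ref{LEM: Triangle Lemma} one generation at a time using the recursion $S(k+1,n,m)=2S(k,n,m)+\lambda(m,k)$ from Lemma \ref{LEM: S(k,n,m) to S(k+1,n,m)}, telescoping, and summing the resulting geometric series. You add one detail the paper leaves implicit but worth recording: since $n$ is odd and $2^{m-k}$ is even, each residue $[n]_{m-k}$ for $k\le m-2$ is odd and hence distinct from the even integer $2^{m-k-1}$, so the apex case of Lemma \ref{LEM: Triangle Lemma} never fires during the iteration; the paper invokes the side-case bounds without spelling this out.
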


	Before proving Lemma \ref{LEM: Iterated Triangle Lemma (Step 3)}, we need to prove a counting lemma.
	\begin{lemma}\label{LEM: S(k,n,m) to S(k+1,n,m)} Fix $m\in \mathbb{N}$. For any $1\leq n \leq 2^m-1$ and $0\leq k< m-1$
		\begin{equation*}
			S(k+1,n,m)=2S(k,n,m)+\lambda(m,k),
		\end{equation*}
	\end{lemma}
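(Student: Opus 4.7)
The claim is purely an algebraic identity about the defining sum of $S(k,n,m)$, with no geometry involved. The plan is to compute $S(k+1,n,m)$ directly from the definition and reorganize the sum so that it splits into a multiple of $S(k,n,m)$ plus an extra term coming from the new summation index $i=k$.

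Concretely, I would first write out
\begin{equation*}
S(k+1,n,m) = \sum_{i=0}^{k} 2^{(k+1)-(i+1)} \lambda(m,i) = \sum_{i=0}^{k} 2^{k-i}\lambda(m,i),
\end{equation*}
and then peel off the top term $i=k$, which contributes $2^{0}\lambda(m,k) = \lambda(m,k)$. The remaining sum is
\begin{equation*}
\sum_{i=0}^{k-1} 2^{k-i}\lambda(m,i) = 2\sum_{i=0}^{k-1} 2^{k-(i+1)}\lambda(m,i) = 2S(k,n,m),
\end{equation*}
where in the last equality I simply match the expression against the definition of $S(k,n,m)$. Combining these two observations yields the desired identity
\begin{equation*}
S(k+1,n,m) = 2S(k,n,m) + \lambda(m,k).
\end{equation*}

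There is no real obstacle here; the only thing to be careful about is keeping the indexing straight when separating the $i=k$ term and when factoring the $2$ through the remaining sum. The hypothesis $0 \le k < m-1$ and $1\le n \le 2^m-1$ plays no role in the algebra itself — it only ensures that the quantities $\lambda(m,i)$ appearing in the sums are all well-defined under their defining formula. Thus the entire proof consists of one display of definitions and two lines of manipulation.
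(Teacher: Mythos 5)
Your proof is correct and takes essentially the same approach as the paper: write out the defining sum for $S(k+1,n,m)$, peel off the $i=k$ term to obtain $\lambda(m,k)$, and factor a $2$ out of the remaining sum to recognize $2S(k,n,m)$.
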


 \begin{proof}[Proof of Lemma \ref{LEM: S(k,n,m) to S(k+1,n,m)}]
	\begin{align*}
		S(k+1,n,m)&=\sum_{i=0}^{k}2^{(k+1)-(i+1)}\lambda(m,i)\\
		&=2^{(k+1)-(k+1)}\lambda(m,k) +\sum_{i=0}^{k-1}2^{(k+1)-(i+1)}\lambda(m,i)\\
		&=\lambda(m,k) +2\sum_{i=0}^{k-1}2^{k-(i+1)}\lambda(m,i)\\
		&=\lambda(m,k) +2S(k,n,m).
	\end{align*}
 \end{proof}
	
 \begin{proof}[Proof of Lemma \ref{LEM: Iterated Triangle Lemma (Step 3)}]
 Fix $m\in 
	\mathbb{N}$ and $1\leq n\leq 2^m-1$. We first prove that 
	\begin{equation*}
		\left|x_m^n(L_0^0)-x_{m-k}^{[n]_{m-k}}\left(L_k^{S(k,n,m)}\right)\right|\leq C_1s_0^{\alpha/2+1}\sum_{i=1}^k\left(\frac{3}{4}\right)^{i+(i-1)\frac{\alpha}{2}}+C_0^2s_0^{\alpha+1}\sum_{i=1}^k\left(\frac{3}{4}\right)^{(i-1)(\alpha+1)}.
	\end{equation*}
	by induction on $k$. 
	
	The case of $k=1$ follows immediately from Lemma \ref{LEM: Triangle Lemma}.

	Now suppose that for some $k\in \mathbb{N}$ with $k<m-1$
	\begin{equation*}
		\left|x_m^n(L_0^0)-x_{m-k}^{[n]_{m-k}}\left(L_k^{S(k,n,m)}\right)\right|\leq C_1s_0^{\alpha/2+1}\sum_{i=1}^k\left(\frac{3}{4}\right)^{i+(i-1)\frac{\alpha}{2}}+C_0^2s_0^{\alpha+1}\sum_{i=1}^k\left(\frac{3}{4}\right)^{(i-1)(\alpha+1)}.
	\end{equation*}
	Applying Lemma \ref{LEM: Triangle Lemma} to $x_{m-k}^{[n]_{m-k}}\left(L_k^{S(k,n,m)}\right)$ with
	$$x_{m-k-1}^{[[n]_{m-k}]_{m-k-1}}\left(L_{k+1}^{2S(k,n,m)+\lambda(m,k)}\right)=x_{m-(k+1)}^{[n]_{m-k-1}}\left(L_{k+1}^{S(k+1,n,m)}\right)$$ 
	from Lemma \ref{LEM: S(k,n,m) to S(k+1,n,m)},
	we obtain 
	\begin{align*}
		\left|x_{m-k}^{[n]_{m-k}}\left(L_k^{S(k,n,m)}\right)-x_{m-(k+1)}^{[n]_{m-k-1}}\left(L_{k+1}^{S(k+1,n,m)}\right)\right|&\leq
		C_1s_0^{\alpha/2+1}\left(\frac{3}{4}\right)^{(k+1)+k\frac{\alpha}{2}}+C_0^2s_0^{\alpha+1}\left(\frac{3}{4}\right)^{k(\alpha+1)}. 
	\end{align*}
	Thus, the result follows from the triangle inequality 
	So for any $1\leq k< m-1$, 
	\begin{align*}
		\left|x_m^n(L_0^0)-x_{m-(k+1)}^{[n]_{m-(k+1)}}\left(L_{k+1}^{S(k+1,n,m)}\right)\right|&<
		C_1s_0^{\alpha/2+1}\sum_{i=1}^{k+1}\left(\frac{3}{4}\right)^{i+(i-1)\frac{\alpha}{2}}+C_0^2s_0^{\alpha+1}\sum_{i=1}^{k+1}\left(\frac{3}{4}\right)^{(i-1)(\alpha+1)}.
	\end{align*}
	Moreover, since both 
	\begin{equation*}
		\sum_{i=1}^{m-1}\left(\frac{3}{4}\right)^{i+(i-1)\frac{\alpha}{2}}\leq \sum_{i=1}^{\infty}\left(\frac{3}{4}\right)^{i+(i-1)\frac{\alpha}{2}}\leq 4 \quad \text{and} \quad \sum_{i=1}^{m-1}\left(\frac{3}{4}\right)^{(i-1)(\alpha+1)}\leq \sum_{i=1}^{m-1}\left(\frac{3}{4}\right)^{(i-1)(\alpha+1)}\leq 4, 
	\end{equation*}
	and $s_0^{\alpha+1}<s_0^{\alpha/2+1}$ since $s_0<1$, Lemma \ref{LEM: Iterated Triangle Lemma (Step 3)} follows. 
     
 \end{proof}

 \begin{proof}[Proof of Theorem \ref{LEM: Dyadic Scales Lemma}]
 Fix $m\in \mathbb{N}$ and let $1\leq n\leq 2^m-1$ odd. From Lemma \ref{LEM: Step 1} and Remark \ref{rmk: sk's} there exists a point $z_m^n\in \Gamma$ such that 
 \begin{equation*}
     |z_m^n-x_1^1(L_{m-1}^{\frac{n-1}{2}})|<C_0s_0^{\alpha/2+1}\left(\frac{3}{4}\right)^{m-1}.
 \end{equation*}
 The proof then follows from Lemma \ref{LEM: Iterated Triangle Lemma (Step 3)}, as long as $S(m-1,n,m)=\frac{n-1}{2}$. Indeed, if $[n]_{m-j}>2^{m-(j+1)}$, then if we write $n$ as the sum of powers of $2$, $2^{m-(j+1)}$ will be a term in that sum. Since 
     \begin{align*}
		      \lambda(m,j) 
		       &=\begin{cases} 
			      0 &\text{if}\quad [n]_{m-j}\leq 2^{m-(j+1)} \\
			       1 &\text{if}\quad [n]_{m-j}> 2^{m-(j+1)}, 
			   \end{cases}
		    \end{align*}
      we have
	    \begin{align*}
		        S(m-1, n,m)&=\sum_{j=0}^{m-2}2^{k-(j+1)}\lambda(m,j)\\
		        &=\sum_{\substack{j=0, \hdots, m-2:\\ 
  [n]_{m-j}>2^{m-1-(j+1)}}}2^{m-1-(j+1)}\\
		        &=\frac{1}{2}\sum_{\substack{j=0, \hdots, m-2:\\ 
  [n]_{m-j}>2^{m-1-(j+1)}}}2^{m-(j+1)}\\
		        &=\frac{1}{2}(n-1), 
		    \end{align*}
	    where the last equality follows from the fact that $n$ is odd.    
 \end{proof}
	
	We are now ready to prove Lemma \ref{LEM: Main Lemma 1}.

 \begin{proof}[Proof of Lemma \ref{LEM: Main Lemma 1}]

	Fix $m_0\in \mathbb{N}$ to be the smallest natural number such that $\frac{1}{2^{m_0}}<s_0^{\alpha/2}$. 
	Then, for any 
	$x\in L_0^0\cap B(x_0,s_0)$ between $x_0$ and $y$, there exists $n\in \mathbb{N}$ with $1\leq n \leq 2^{m_0}-1$, such that 
	\begin{equation*}
		|x-x_{m_0}^n(L_0^0)|< \frac{1}{2^{m_0}}s_0.
	\end{equation*}
	By Theorem \ref{LEM: Dyadic Scales Lemma}, there exists a point $z_{m_0}^n\in \Gamma $ such that 
	\begin{equation*}
		|x_{m_0}^n(L_0^0)-z_{m_0}^n|<Cs_0^{\alpha/2+1},
	\end{equation*}
	and then from the triangle inequality,
	\begin{align*}
		d(x,\Gamma)&\leq |x-x_{m_0}^n(L_0^0)|+|x_{m_0}^n(L_0^0)-z_{m_0}^n|\\
		&\leq \frac{1}{2^{m_0}}s_0+ Cs_0^{\alpha/2+1}\\
		&\leq Cs_0^{\alpha/2+1}.
	\end{align*}
	Thus we have shown that for any $x\in L_0^0\cap B(x_0,s_0)$ between $x_0$ and $y$, $d(x, \Gamma)<Cs_0^{\alpha/2+1}$. 
	
	Theorem \ref{LEM: Dyadic Scales Lemma} can be applied with $(L_0^0)^{\prime}$, the line through $x_0$ and $y^{\prime}\in \Gamma \cap \partial B(x_0,s_0)$, where $\mathcal{H}^1(\arc_{x_0,s_0}(y,y^{\prime}))\in [\pi s_0-C_0s_0^{\alpha+1}, \pi s_0]$. Since 
	\begin{equation*}
		|x_{m_0}^n((L_0^0)^{\prime})- x_{m_0}^n(L_0^0)|<2C_0s_0^{\alpha+1},
	\end{equation*}
	see Corollary \ref{COR: line choice}, it follows that for $x \in L_0^0 \cap B(x_0,s_0)$ between $x_0$ and $2x_0-y$, 
	\begin{align*}
		d(x,\Gamma)&\leq |x-x_{m_0}^n(L_0^0)|+|x_{m_0}^n(L_0^0)-x_{m_0}^n((L_0^0)^{\prime})|+|x_{m_0}^n((L_0^0)^{\prime})-\left(z_{m_0}^n\right)^{\prime}|\\
		&\leq \frac{1}{2^{m_0}}s_0+ 2C_0s_0^{\alpha+1}+ Cs_0^{\alpha/2+1}\\
		&\leq Cs_0^{\alpha/2+1}.
	\end{align*}
	
	We now show that $d(x, \Gamma\cap B(x_0,s_0))<Cs_0^{\alpha/2+1}$.
	Consider a point $x\in L_0^0\cap B(x_0,s_0-Cs_0^{\alpha/2+1})$, where we assume by translating that $x_0=0$. From our work above, there exists a point $z_{m_0}^n\in \Gamma$ such that
	\begin{equation*}
		|x-z_{m_0}^n|< Cs_0^{\alpha/2+1}.
	\end{equation*}
	Then, 
	\begin{equation*}
		|x_0-z_{m_0}^n|< s_0-Cs_0^{\alpha/2+1}+Cs_0^{\alpha/2+1}=s_0.
	\end{equation*}
	So for any $x \in L_0^0 \cap B(x_0,s_0-Cs_0^{\alpha/2+1})$, $d(x, \Gamma \cap B(x_0,s_0))<Cs_0^{\alpha/2+1}$. If $x\in L_0^0 \cap (B(x_0,s_0)\setminus B(x_0, s_0-Cs_0^{\alpha/2+1}))$, let $\widehat{x}:=\frac{x}{|x|}(s_0-Cs_0^{\alpha/2+1})$. Observe that $\widehat{x}\in B(x_0, s_0-Cs_0^{\alpha/2+1})$. Then,
	\begin{align*}
		|x-z_{m_0}^n|&\leq |x-\widehat{x}|+|\widehat{x}-z_{m_0}^n|\\
		&=|x|\left(1- \frac{1}{|x|}(s_0-Cs_0^{\alpha/2+1})\right)+|\widehat{x}-z_{m_0}^n|\\
		&= |x|\left(1- \frac{s_0}{|x|}+\frac{Cs_0^{\alpha/2+1}}{|x|})\right)+|\widehat{x}-z_{m_0}^n|\\
		&< |x|\left(\frac{Cs_0^{\alpha/2+1}}{|x|})\right)+|\widehat{x}-z_{m_0}^n|\\
		&<Cs_0^{\alpha/2+1}+Cs_0^{\alpha/2+1}\\
		&=2Cs_0^{\alpha/2+1}.
	\end{align*}
Our proof of Lemma \ref{LEM: Main Lemma 1} is complete.   \end{proof}

\section{Proof of Lemma \ref{LEM: Main Lemma 2}}\label{section: proof of lemma 2}
	Our goal in this section is to prove Lemma \ref{LEM: Main Lemma 2}, which ensures that the following picture holds for each $x\in \Gamma$ and $s_0$ small enough.
	\begin{figure}[H]
		\begin{center}
			\begin{tikzpicture}[scale=1]
				\def\angl{7}
				
				\coordinate (X1) at (0,0); 
				\filldraw[blue] (X1) circle (2pt);
				\node[below=2] at (X1) {$x$};
				
				\def\shft{1}
				\def\eps{.6}
				
				\draw[<->] ({4*\shft},\eps) to node[midway, right] {$C_0s_0^{\alpha/16+1}$}({4*\shft},0);

				\draw ({4*\shft},0)--({-4*\shft},0); 
				\draw[dashed] ({4*\shft},\eps)--({-4*\shft},\eps); 
				\draw[dashed]  ({4*\shft},-\eps)--({-4*\shft},-\eps); 
				
				\draw[ultra thick] (0,0) circle (3);
				\node[below right] at (-5,0) {$L_0^0$}; 
				
				\node[blue] at (-4.5,{1.2*\eps}) {\Large $\Gamma$};
				\coordinate (A) at (-4.2,{1.4*\eps}); 
				\def\a{-90}; 
				\coordinate (B) at (-3.8,{.5*\eps});
				\def\b{-45};
				\coordinate (C) at (-3.2,{-.7*\eps});
				\def\c{0};
				\coordinate (D) at (-2.8,{.2*\eps});
				\def\d{0};
				\coordinate (E) at (-1,{-.6*\eps});
				\def\e{40};
				\coordinate (F) at (-1.2,{.4*\eps});
				\def\f{210};
				\coordinate (G) at (-.5,{-.2*\eps});
				\def\g{-50};
				\coordinate (H) at (0,{0*\eps});
				\def\h{30};
				\coordinate (I) at (.4,{-.1*\eps});
				\def\i{20};
				\coordinate (J) at (1.3,{.3*\eps});
				\def\j{10};
				
				\coordinate (K) at (1.6,{0*\eps});
				\def\k{-90};
				\coordinate (L) at (1.5,{-.6*\eps});
				\def\l{-90};
				\coordinate (M) at (2,{-.8*\eps});
				\def\m{15};
				\coordinate (N) at (2.8,{.3*\eps});
				\def\n{0};
				\coordinate (O) at (3.3,{.05*\eps});
				\def\o{10};
				
				\draw[very thick,blue] 
				(A) to [out=\a,in={\b-180}]
				(B) to [out=\b,in={\c-180}]
				(C) to [out=\c,in={\d-180}]
				(D) to [out=\d,in={\e-180}] 
				(E) to [out=\e,in={\f-180}] 
				(F) to [out=\f-180,in={\g-180}] 
				(G) to [out=\g,in={\h-180}] 
				(H) to [out=\h,in={\i-180}] 
				(I) to [out=\i,in={\j-180}] 
				(J) to [out=\j,in={\k-180}] 
				(K) to [out=\k,in={\l-180}] 
				(L) to [out=\l,in={\m-180}] 
				(M) to [out=\m,in={\n-180}] 
				(N) to [out=\n,in={\o-180}] 
				(O);
				
				
				\fill[color=lavender] 
				(12:3) arc[start angle=12, end angle=168, radius=3]
				-- (12:3) -- cycle;

				\fill[color=spring] 
				(192:3) arc[start angle=192, end angle=348, radius=3]
				-- (192:3) -- cycle;
     \draw (3,2.9) -- (5.1,2.9) -- (5.1,1.7)--(3,1.7)--(3,2.9);
	\filldraw[lavender] (3.1,2.8) -- (3.5,2.8) -- (3.5,2.4)--(3.1,2.4)--(3.1,2.8);
	\node at (4.3,2.6) {$\Omega^+$};
	\filldraw[spring] (3.1,2.2) -- (3.5,2.2) -- (3.5,1.8)--(3.1,1.8)--(3.1,2.2);
	\node at (4.3,2) {$\Omega^-$};
    
			\end{tikzpicture}
            \caption{}
			\label{fig: main lemma 2 assume claim}
		\end{center}
	\end{figure}

\begin{proof}[Proof of Lemma \ref{LEM: Main Lemma 2}] Let $x_0\in \Gamma$ and let $s_0<\rho_1$, where $\rho_1\leq \rho_0$ to be chosen later. Suppose for contradiction that there exists a point $u\in \left(\Gamma \cap B(x_0,s_0)\right)\setminus (L_0^0)_{C_0s_0^{\alpha/16+1}}$. Observe that 
 \begin{equation}\label{eq: s bound}
     s:=|x_0-u|\in (C_0s_0^{\alpha/16+1}, s_0)
 \end{equation}
 and let $L_s:=L_s(x_0,u)$ denote the line through $x_0$ and $u$. For $s_0<\left(\frac{1}{2}\right)^{\frac{16}{3\alpha}}$, 
 \begin{equation}\label{e: first s0 of lemma 2}
     B(u, C_0s_0^{\alpha/4+1})\subset \mathbb{R}^2_+\setminus \overline{(L_0)_{C_0s_0^{\alpha/2+1}}}.
 \end{equation}
	
	\begin{figure}[H]
   	    \begin{center}
    \begin{tikzpicture}[scale=1]
	\def\angl{7}
    \coordinate (X1) at (0,0); 
	 
    \node (P) at \polar{5}{40} {}; 
    \node (Q) at \polar{5}{220} {}; 
    \draw[ultra thick, black] (P)--(Q);
    \node[orange] (Y) at \polar{2.4}{40} {};
    \node[below, black]  at (Y) {$u$};
    \draw (Y) circle (1);
    \filldraw[black] (Y) circle (2pt);
    \node[below right] at (P) {$L_s$}; 

    \node[above=4, black]  at \polar{4}{43} {$B(u,C_0s_0^{\alpha/4+1})$};

 \filldraw[black] (X) circle (2pt);
	 \node[below=2] at (X) {$x_0$};

\node (A) at \polar{1.4}{40} {};
\node[above left, black] at (A) {$v$};
    \filldraw[black] (A) circle (2pt);
    
	\def\shft{1}
	\def\eps{.6}

		
	\draw ({4*\shft},0)--({-4*\shft},0); 
	\draw[dashed] ({4*\shft},\eps)--({-4*\shft},\eps); 
	\draw[dashed]  ({4*\shft},-\eps)--({-4*\shft},-\eps);
 \draw[<->] ({4*\shft},\eps) to node[midway, right] {$C_0{s_0^{\alpha/16+1}}$}({4*\shft},0);
  \draw[<->] ({-4*\shft},\eps*0.5) to node[midway, left] {$C{s_0^{\alpha/2+1}}$}({-4*\shft},0);
 \draw[dashed, blue] ({4*\shft},\eps*0.5)--({-4*\shft},\eps*0.5); 
	\draw[dashed,blue]  ({4*\shft},-\eps*0.5)--({-4*\shft},-\eps*0.5); 
	
\draw[ultra thick] (0,0) circle (3);
\node[below right] at (-5,0) {$L_0^0$}; 
	\end{tikzpicture}
        
    \end{center}
   \caption{}
			\label{fig: set up for contradiction in lemma 2}
	\end{figure}

	Let $v$ denote the point on $L_s\cap \partial B(u,C_0s_0^{\alpha/4+1})$ between $u$ and $x_0$. Applying Lemma \ref{LEM: Main Lemma 1} to $L_s$, $x_0$, and $u$, we obtain the existence of a point $z\in \Gamma$ such that 
	\begin{equation}\label{e: height of z}
		|v-z|<Cs^{\alpha/2+1}. 
	\end{equation}
	Taking also $s_0<\left(\frac{C_0}{2C}\right)^{4/\alpha}$, it follows from the triangle inequality that 
	\begin{equation}\label{eq: t bound}
		t:=|u-z|\in \left(\frac{1}{2}C_0s_0^{\alpha/4+1},2C_0s_0^{\alpha/4+1}\right).
	\end{equation}

	The ball $B(u, t)$ contains $z$ on its boundary and for $s_0< \left(\frac{1}{4}\right)^{16/3\alpha} $ we have
    \begin{equation}\label{e: second s0 for lemma 2}
        B(u, t)\subset \mathbb{R}^2_+\setminus \overline{(L_0)_{C_0s_0^{\alpha/2+1}}}.
    \end{equation}

 	\begin{figure}[H]
		\begin{center}
   	    \begin{center}
    \begin{tikzpicture}[scale=1]
	\def\angl{7}
    \coordinate (X1) at (0,0); 
	 
    \node (P) at \polar{5}{40} {}; 
    \node (Q) at \polar{5}{220} {}; 
    \draw[ultra thick, black] (P)--(Q);
    \node[orange] (Y) at \polar{2.4}{40} {};
    \node[below, black]  at (Y) {$u$};
    \draw (Y) circle (1);
    \draw[red] (Y) circle (1.1);
    \filldraw[black] (Y) circle (2pt);
    \node[below right] at (P) {$L_s$}; 

    \node[above=4, red]  at \polar{4}{43} {$B(u,t)$};

 \filldraw[black] (X) circle (2pt);
	 \node[below=2] at (X) {$x_0$};

\node (A) at \polar{1.4}{40} {};
\node[below right, black] at (A) {$v$};
    \filldraw[black] (A) circle (2pt);

    \node (B) at \polar{1.35}{52} {};
\node[above left, black] at (B) {$z$};
    \filldraw[red] (B) circle (2pt);
    
	\def\shft{1}
	\def\eps{.6}

		
	\draw ({4*\shft},0)--({-4*\shft},0); 
	\draw[dashed] ({4*\shft},\eps)--({-4*\shft},\eps); 
	\draw[dashed]  ({4*\shft},-\eps)--({-4*\shft},-\eps);
 \draw[<->] ({4*\shft},\eps) to node[midway, right] {$C_0{s_0^{\alpha/16+1}}$}({4*\shft},0);
  \draw[<->] ({-4*\shft},\eps*0.5) to node[midway, left] {$C{s_0^{\alpha/2+1}}$}({-4*\shft},0);
 \draw[dashed, blue] ({4*\shft},\eps*0.5)--({-4*\shft},\eps*0.5); 
	\draw[dashed,blue]  ({4*\shft},-\eps*0.5)--({-4*\shft},-\eps*0.5); 
	
\draw[ultra thick] (0,0) circle (3);
\node[below right] at (-5,0) {$L_0^0$}; 
	\end{tikzpicture}
        
    \end{center}
   \caption{}
			\label{fig: finding z in lemma 2}
		\end{center}
	\end{figure}
	
	We first claim that 
    \begin{equation}\label{eq: all boundary close to Ls}
        \mathcal{H}^1\left(\arc_{u,t}(p,L_s)\right)\leq \left(2C_0t^{\alpha}+C^{\prime}s_0^{\alpha/4}\right)t \qquad \text{for all}\qquad p\in \Gamma\cap \partial B(u,t),
    \end{equation}
    where $C^{\prime}$ depends only on $C$ and $C_0$.
Let $p\in \Gamma \cap \partial B(u,t)$. From the Trapped Boundary Lemma, either $\mathcal{H}^1(\arc_{u,t}(p,z))\in [0,2C_0t^{\alpha+1}]$ or $\mathcal{H}^1(\arc_{u,t}(p,z))\in [\pi t-C_0t^{\alpha+1}, \pi t]$. If $\mathcal{H}^1(\arc_{u,t}(p,z))\in [0,2C_0t^{\alpha+1}]$ then 
\begin{equation}\label{eq: p to Ls prelim 1}
    \mathcal{H}^1\left(\arc_{u,t}(p,L_s)\right)\leq 2C_0t^{\alpha+1}+\mathcal{H}^1(\arc_{u,t}(z,L_s)). 
\end{equation}
If $\mathcal{H}^1(\arc_{u,t}(p,z))\in [\pi t-C_0t^{\alpha+1}, \pi t]$, let $z^{\prime}=2u-z$ be the antipodal point to $z$ on $\partial B(u,t)$. Then $\mathcal{H}^1(\arc_{u,t}(p,z^{\prime}))\in [0,C_0t^{\alpha+1}]$, so as above, 
\begin{equation}\label{eq: p to Ls prelim 2}
    \mathcal{H}^1\left(\arc_{u,t}(p,L_s)\right)\leq C_0t^{\alpha+1}+\mathcal{H}^1(\arc_{u,t}(z^{\prime},L_s)).
\end{equation}
From \eqref{eq: p to Ls prelim 1}, \eqref{eq: p to Ls prelim 2}, and the fact that $\mathcal{H}^1(\arc_{u,t}(z,L_s))=\mathcal{H}^1(\arc_{u,t}(z^{\prime},L_s))$, in order to prove \eqref{eq: all boundary close to Ls} it is sufficient to show that   
 \begin{equation}\label{eq: arc z to Ls}
    \mathcal{H}^1(\arc_{u,t}(z,L_s))<C^{\prime}s_0^{\alpha/4}t, 
 \end{equation}
 where $C^{\prime}$ depends only on $C$ and $C_0$.
    Indeed, $\mathcal{H}^1(\arc_{u,t}(z,L_s))=\omega t$, where $\omega:=\measuredangle zux_0$, and \eqref{e: height of z} and \eqref{eq: t bound} yield
 \begin{equation}\label{eq: angle btwn zux_0 bound}
     \sin(\omega) <2CC_0^{-1}s_0^{\alpha/4}.
 \end{equation}
  Using the Taylor series for $\arcsin(x)$ with $s_0$ small enough gives \eqref{eq: arc z to Ls}. Thus \eqref{eq: all boundary close to Ls} holds.

 Define 
	\begin{equation*}
		A_s:=A(x_0, s-Cs_0^{\alpha/2+1}, s+Cs_0^{\alpha/2+1}), 
	\end{equation*}
	the annular region containing the circumference $\partial B(x_0,s)$ with radius $Cs_0^{\alpha/2+1}$. $\partial A_s$ intersects $\partial B(u,t)$ in four points, $b_i$, $i=1,2,3,4$, labeled counterclockwise such that 
 \begin{align*}
     b_1,b_2&\in \partial B(x_0,s-Cs_0^{\alpha/2+1})\cap \partial B(u,t),\\
     b_3,b_4&\in \partial B(x_0,s+Cs_0^{\alpha/2+1})\cap \partial B(u,t).
 \end{align*}
 See Figure \ref{fig: annulus As and points bi}. We claim that 
\begin{equation}\label{eq: boundary of ball inside annulus doesn't see boundary}
    \arc_{u,t}(b_1,b_4)\subset \Omega^+ \qquad \text{and} \qquad  \arc_{u,t}(b_2,b_3) \subset \Omega^-, 
\end{equation}
	or vice versa, where $\arc_{u,t}(b_1,b_4)$ and $\arc_{u,t}(b_2,b_3)$ are the minimal arcs between $b_1$ and $b_4$, and between $b_2$ and $b_3$, respectively, on $\partial B(u,t)$. Observe that $\arc_{u,t}(b_1,b_4),\,\arc_{u,t}(b_2,b_3)\subset A_s$. \\

  	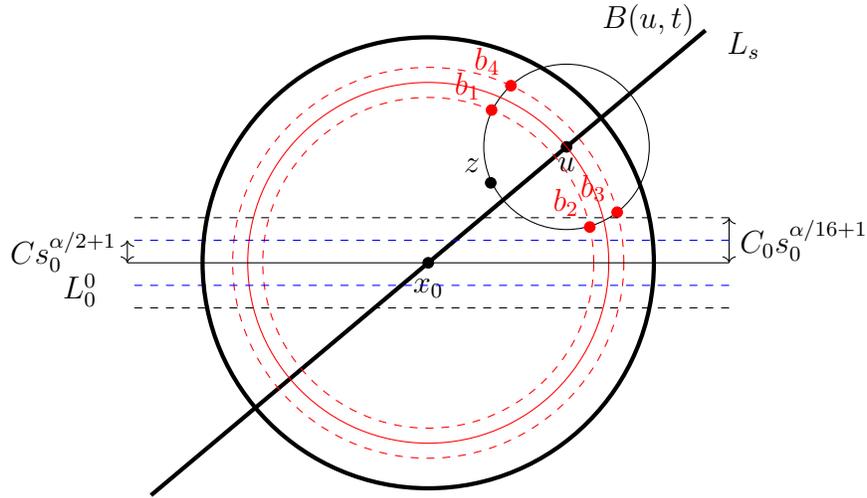
\begin{figure}[H]
		\begin{center}
    \begin{tikzpicture}[scale=1]
	\def\angl{7}
    \coordinate (X1) at (0,0); 
	 
    \node (P) at \polar{5}{40} {}; 
    \node (Q) at \polar{5}{220} {}; 
    \draw[ultra thick, black] (P)--(Q);
    \node[orange] (Y) at \polar{2.4}{40} {};
    \node[below, black]  at (Y) {$u$};
    \draw[black] (Y) circle (1.1);
    \filldraw[black] (Y) circle (2pt);
    \node[below right] at (P) {$L_s$}; 

    \node[above=4, black]  at \polar{4}{43} {$B(u,t)$};

 \filldraw[black] (X) circle (2pt);
	 \node[below=2] at (X) {$x_0$};

  \draw[red] (X) circle (2.4);
  \draw[dashed, red] (X) circle (2.6);
  \draw[dashed, red] (X) circle (2.2);
  
\node[orange] (B1) at \polar{2.2}{67.5} {};
    \node[above left, red]  at (B1) {$b_1$};
    \filldraw[red] (B1) circle (2pt);
    \node[orange] (B2) at \polar{2.2}{12.5} {};
    \node[above left, red]  at (B2) {$b_2$};
    \filldraw[red] (B2) circle (2pt);
        \node[orange] (B3) at \polar{2.6}{15} {};
    \node[above left, red]  at (B3) {$b_3$};
    \filldraw[red] (B3) circle (2pt);
    \node[orange] (B4) at \polar{2.6}{65} {};
    \node[above left, red]  at (B4) {$b_4$};
    \filldraw[red] (B4) circle (2pt);

    \node (B) at \polar{1.35}{52} {};
\node[above left, black] at (B) {$z$};
    \filldraw[black] (B) circle (2pt);
    
	\def\shft{1}
	\def\eps{.6}

		
	\draw ({4*\shft},0)--({-4*\shft},0); 
	\draw[dashed] ({4*\shft},\eps)--({-4*\shft},\eps); 
	\draw[dashed]  ({4*\shft},-\eps)--({-4*\shft},-\eps);
 \draw[<->] ({4*\shft},\eps) to node[midway, right] {$C_0{s_0^{\alpha/16+1}}$}({4*\shft},0);
  \draw[<->] ({-4*\shft},\eps*0.5) to node[midway, left] {$C{s_0^{\alpha/2+1}}$}({-4*\shft},0);
 \draw[dashed, blue] ({4*\shft},\eps*0.5)--({-4*\shft},\eps*0.5); 
	\draw[dashed,blue]  ({4*\shft},-\eps*0.5)--({-4*\shft},-\eps*0.5); 
	
\draw[ultra thick] (0,0) circle (3);
\node[below right] at (-5,0) {$L_0^0$}; 
	\end{tikzpicture}
        
    \end{center}
   \caption{The annulus $A_s$ and the points $b_i$}
			\label{fig: annulus As and points bi}
	\end{figure}
 To prove \eqref{eq: boundary of ball inside annulus doesn't see boundary}, we first show that $b_i\not\in \Gamma$ for all $i=1,2,3,4$. From \eqref{eq: all boundary close to Ls} it is sufficient to show 
	\begin{equation}\label{eq: b_is cant be in Gamma}
		\mathcal{H}^1(\arc_{u,t}(b_i,L_s))>\left( C^{\prime}s_0^{\alpha/4}+2C_0t^{\alpha}\right)t\qquad \text{for}\qquad i=1,2,3,4. 
	\end{equation}
    Observe that 
    \begin{equation}\label{eq: lengths of the b_i arcs}
        \begin{cases}
            \mathcal{H}^1(\arc_{u,t}(b_i, L_s))=\theta_{i}t &\text{if}\quad  0\leq \theta_i\leq \frac{\pi}{2},\\
            \mathcal{H}^1(\arc_{u,t}(b_i, L_s))=(\pi -\theta_{i})t &\text{if}\quad \frac{\pi}{2}\leq \theta_i \leq \pi,
        \end{cases}
    \end{equation}
	 where $\theta_{i}:= \measuredangle b_iu x_0$. 
	  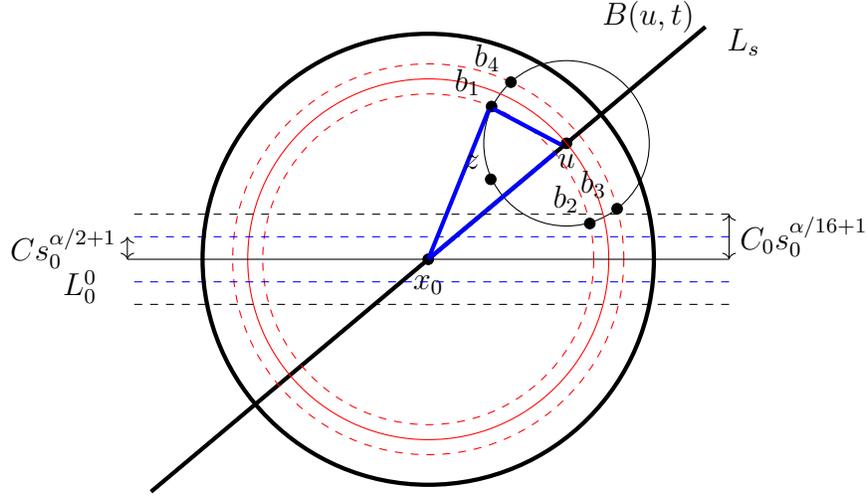
\begin{figure}[H]
		\begin{center}
    \begin{tikzpicture}[scale=1]
	\def\angl{7}
    \coordinate (X1) at (0,0); 
	 
    \node (P) at \polar{5}{40} {}; 
    \node (Q) at \polar{5}{220} {}; 
    \draw[ultra thick, black] (P)--(Q);
    \node[orange] (Y) at \polar{2.4}{40} {};
    \node[below, black]  at (Y) {$u$};
    \draw[black] (Y) circle (1.1);
    \filldraw[black] (Y) circle (2pt);
    \node[below right] at (P) {$L_s$}; 

    \node[above=4, black]  at \polar{4}{43} {$B(u,t)$};

 \filldraw[black] (X) circle (2pt);
	 \node[below=2] at (X) {$x_0$};

  \draw[red] (X) circle (2.4);
  \draw[dashed, red] (X) circle (2.6);
  \draw[dashed, red] (X) circle (2.2);
  
\node[orange] (B1) at \polar{2.2}{67.5} {};
    \node[above left, black]  at (B1) {$b_1$};
    \filldraw[black] (B1) circle (2pt);
    \node[orange] (B2) at \polar{2.2}{12.5} {};
    \node[above left, black]  at (B2) {$b_2$};
    \filldraw[black] (B2) circle (2pt);
        \node[orange] (B3) at \polar{2.6}{15} {};
    \node[above left, black]  at (B3) {$b_3$};
    \filldraw[black] (B3) circle (2pt);
    \node[orange] (B4) at \polar{2.6}{65} {};
    \node[above left, black]  at (B4) {$b_4$};
    \filldraw[black] (B4) circle (2pt);

\node[orange] (B11) at \polar{2.2}{71} {};
\node[orange] (Y1) at \polar{2.4}{37} {};
\node[orange] (B12) at \polar{2.3}{68} {};
\draw[ultra thick, blue] (X)--(B12);
\draw[ultra thick, blue] (B11)--(Y1);
\draw[ultra thick, blue] (Y)--(X);

    \node (B) at \polar{1.35}{52} {};
\node[above left, black] at (B) {$z$};
    \filldraw[black] (B) circle (2pt);
    
	\def\shft{1}
	\def\eps{.6}

		
	\draw ({4*\shft},0)--({-4*\shft},0); 
	\draw[dashed] ({4*\shft},\eps)--({-4*\shft},\eps); 
	\draw[dashed]  ({4*\shft},-\eps)--({-4*\shft},-\eps);
 \draw[<->] ({4*\shft},\eps) to node[midway, right] {$C_0{s_0^{\alpha/16+1}}$}({4*\shft},0);
  \draw[<->] ({-4*\shft},\eps*0.5) to node[midway, left] {$C{s_0^{\alpha/2+1}}$}({-4*\shft},0);
 \draw[dashed, blue] ({4*\shft},\eps*0.5)--({-4*\shft},\eps*0.5); 
	\draw[dashed,blue]  ({4*\shft},-\eps*0.5)--({-4*\shft},-\eps*0.5); 
	
\draw[ultra thick] (0,0) circle (3);
\node[below right] at (-5,0) {$L_0^0$}; 
	\end{tikzpicture}
        
    \end{center}
   \caption{Triangle with vertices $x_0$, $b_1$, and $u$}
			  \label{fig: triangle for bi}
	\end{figure}

	Consider the triangle with vertices $x_0$, $b_i$, and $u$, as seen in blue in Figure \ref{fig: triangle for bi}. We first show that \eqref{eq: b_is cant be in Gamma} holds for $i=1,2$. It follows from the law of cosines that
	\begin{align*}
		\cos(\theta_i)=\frac{s^2+t^2-\left(s-Cs_0^{\alpha/2+1}\right)^2}{2st}=\frac{t^2+2Css_0^{\alpha/2+1}-C^2s_0^{\alpha+2}}{2st}. 
	\end{align*}
If $0\leq \theta_i\leq \pi/2$, then \eqref{eq: s bound} and \eqref{eq: t bound} yield
	\begin{equation*}
		0\leq \cos(\theta_{i})\leq C_3s_0^{3\alpha/16}, 
	\end{equation*}
 where $C_3$ depends only on $C_0$ and $C$. If $\pi/2\leq \theta_i\leq \pi$, then $0\leq \pi-\theta_i\leq \pi/2$, and
 \eqref{eq: s bound} and \eqref{eq: t bound} yield
	\begin{equation*}
		0\leq \cos(\pi-\theta_{i})\leq C_3s_0^{3\alpha/16}. 
	\end{equation*}
Observe that this is equivalent to 
 \begin{equation*}
 \begin{cases}
     0<\sin(\frac{\pi}{2}-\theta_{i})<C_3s_0^{3\alpha/16} &\text{if}\quad 0\leq \theta_i\leq \frac{\pi}{2},\\
      0<\sin(\theta_{i}-\frac{\pi}{2})<C_3s_0^{3\alpha/16} &\text{if}\quad \frac{\pi}{2}\leq \theta_i\leq \pi.
 \end{cases} 
	\end{equation*}
As in \eqref{eq: angle btwn zux_0 bound}, taking $s_0$ small enough, for $i=1,2$, we obtain
\begin{equation}\label{eq: theta_is close to pi/2}
    \begin{cases}
         \frac{\pi}{2}-C_3s_0^{3\alpha/16}\leq \theta_{i}\leq \frac{\pi}{2} &\text{if}\quad 0\leq \theta_i\leq \frac{\pi}{2},\\
      \frac{\pi}{2}-C_3s_0^{3\alpha/16}\leq \pi -\theta_{i}\leq \frac{\pi}{2} &\text{if}\quad \frac{\pi}{2}\leq \theta_i\leq \pi,
    \end{cases}
\end{equation}
 where $C_3$ is different from above but still depends only on $C$ and $C_0$. 

For $i=3,4$, since 
\begin{equation}\label{eq: theta12 smaller than theta34}
    \cos(\theta_i)=\frac{s^2+t^2-\left(s+Cs_0^{\alpha/2+1}\right)^2}{2st}\leq \frac{s^2+t^2-\left(s-Cs_0^{\alpha/2+1}\right)^2}{2st}, 
\end{equation}
\eqref{eq: theta_is close to pi/2} also holds for $i=3,4$. Thus \eqref{eq: lengths of the b_i arcs} and \eqref{eq: theta_is close to pi/2} yield
\begin{equation*}
    \mathcal{H}^1(\arc_{u,t}(b_i,L_s))\geq \left( \frac{\pi}{2}-C_3s_0^{3\alpha/16}\right)t, 
\end{equation*}
and since 
\begin{equation}\label{e: third s0 lemma 2}
    \frac{\pi}{2}-C_3s_0^{3\alpha/16}\geq C^{\prime}s_0^{\alpha/4}+2C_0t^{\alpha}
\end{equation}
for $s_0$ small enough, \eqref{eq: b_is cant be in Gamma} holds. Thus $b_i\not\in \Gamma$ for all $i=1,2,3,4$.

We now prove \eqref{eq: boundary of ball inside annulus doesn't see boundary}. Since $b_1 \not \in \Gamma$, suppose without loss of generality that $b_1\in \Omega^+$. It then follows immediately that $b_2\in \Omega^-$ (see Figure \ref{fig: triangle for bi}). Since $b_4$ is on the same side of $L_s$ as $b_1$, we first claim that $b_4\in \Omega^+$ (and thus by similar argument $b_3 \in \Omega^-$). If not, then there exists a point $r\in \arc_{u,t}(b_1,b_4)\cap \Gamma$, where
\begin{align*}
    \mathcal{H}^1(\arc_{u,t}(r,L_s))\geq \mathcal{H}^1(\arc_{u,t}(b_i,L_s)),
\end{align*}
and from \eqref{eq: b_is cant be in Gamma}, this violates \eqref{eq: all boundary close to Ls}. Thus, $b_4\in \Omega^+$. Moreover this shows that $\arc_{u,t}(b_1,b_4)\cap \Gamma =\emptyset$. Similar arguments hold for $\arc_{u,t}(b_2,b_3)$, and thus \eqref{eq: boundary of ball inside annulus doesn't see boundary} holds. 

	\begin{figure}[H]
		\begin{center}
    \begin{tikzpicture}[scale=1]
	\def\angl{7}
    \coordinate (X1) at (0,0); 
	 
    \node (P) at \polar{5}{40} {}; 
    \node (Q) at \polar{5}{220} {}; 
    \draw[ultra thick, black] (P)--(Q);
    \node[orange] (Y) at \polar{2.4}{40} {};
    \node[below, black]  at (Y) {$u$};
    \draw[black] (Y) circle (1.1);
    \filldraw[black] (Y) circle (2pt);
    \node[below right] at (P) {$L_s$}; 

    \node[above=4, black]  at \polar{4}{43} {$B(u,t)$};

 \filldraw[black] (X) circle (2pt);
	 \node[below=2] at (X) {$x_0$};
     
      \filldraw[red] (2.4,0) circle (2pt);
	 \node[red, below left=2] at (2.4,0) {$q$};

  \draw[red] (X) circle (2.4);
  \draw[dashed, red] (X) circle (2.6);
  \draw[dashed, red] (X) circle (2.2);
  
\node[orange] (B1) at \polar{2.2}{67.5} {};
    \node[above left, black]  at (B1) {$b_1$};
    \filldraw[black] (B1) circle (1.5pt);
    \node[orange] (B2) at \polar{2.2}{12.5} {};
    \node[above left, black]  at (B2) {$b_2$};
    \filldraw[black] (B2) circle (1.5pt);
        \node[orange] (B3) at \polar{2.6}{15} {};
    \node[right, black]  at (B3) {$b_3$};
    \filldraw[black] (B3) circle (1.5pt);
    \node[orange] (B4) at \polar{2.6}{65} {};
    \node[above left, black]  at (B4) {$b_4$};
    \filldraw[black] (B4) circle (1.5pt);

    \draw[blue] (X) circle (2.5);
    \node[orange] (V) at \polar{2.5}{5} {};
    \node[below right, blue]  at (V) {$v$};
    \filldraw[blue] (V) circle (2pt);
    \node[orange] (Vstar) at \polar{2.5}{25} {};
    \node[above right, blue]  at (Vstar) {$v^*$};
    \filldraw[blue] (Vstar) circle (2pt);

	\def\shft{1}
	\def\eps{.6}

	\draw ({4*\shft},0)--({-4*\shft},0); 
	\draw[dashed] ({4*\shft},\eps)--({-4*\shft},\eps); 
	\draw[dashed]  ({4*\shft},-\eps)--({-4*\shft},-\eps);
 \draw[<->] ({4*\shft},\eps) to node[midway, right] {$C_0{s_0^{\alpha/16+1}}$}({4*\shft},0);
  \draw[<->] ({-4*\shft},\eps*0.5) to node[midway, left] {$C{s_0^{\alpha/2+1}}$}({-4*\shft},0);
 \draw[dashed, blue] ({4*\shft},\eps*0.5)--({-4*\shft},\eps*0.5); 
	\draw[dashed,blue]  ({4*\shft},-\eps*0.5)--({-4*\shft},-\eps*0.5); 
	
\draw[ultra thick] (0,0) circle (3);
\node[below right] at (-5,0) {$L_0^0$}; 
	\end{tikzpicture}
        
    \end{center}
   \caption{$\partial B(x_0,t^*)$}
			  \label{fig: points v and v*}
	\end{figure}
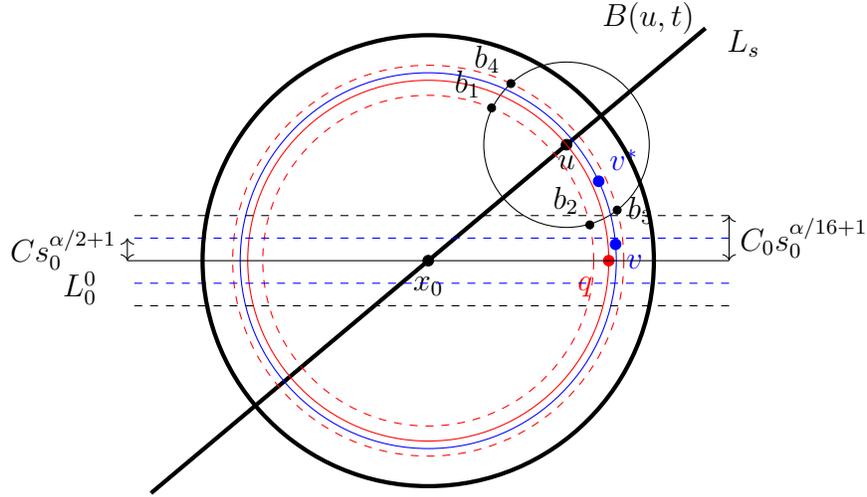

Let $q\in L_0^0\cap \partial B(x_0,s)$ such that $\mathcal{H}^1(arc_{x_0,s}(u,q))\leq \frac{\pi}{2}s$. It follows from Theorem \ref{LEM: Dyadic Scales Lemma} that there exists a point $v\in \Gamma \cap B(x_0,s_0)$ such that $|v-q|<Cs_0^{\alpha/2+1}$. In particular this means,
\begin{equation}\label{eq: t^* bound}
    \partial B(x_0,t^*)\subset A_s \qquad \text{where}\qquad t^*=|x_0-v|\in (s-Cs_0^{\alpha/2+1},s+Cs_0^{\alpha/2+1}), 
\end{equation}
and thus, 
\begin{equation*}
   \partial B(x_0,t^*)\cap \arc_{u,t}(b_1,b_4)\neq \emptyset\qquad \text{and}  \qquad  \partial B(x_0,t^*)\cap \arc_{u,t}(b_1,b_4)\neq \emptyset.
\end{equation*}
(See Figure \ref{fig: points v and v*} for $\partial B(x_0,t^*)$.) Since $\arc_{u,t}(b_1,b_4)$ and $\arc_{u,t}(b_2,b_3)$ are in complementary domains, there must exist a point $v^*\in A_s\cap B(u,t) \cap \partial B(x_0, t^*)$ by connectivity.

 To contradict the Trapped Boundary Lemma, we show that 
	\begin{equation}\label{eq: final contradiction}
		2C_0(t^*)^{\alpha+1}<\mathcal{H}^1(\arc_{x_0,t^*}(v,v^*))<\pi t^*-C_0(t^*)^{\alpha+1}. 
	\end{equation}
 From the triangle inequality,
 \begin{align}\label{e: triangle inequality final estimate}
 \begin{split}
      \mathcal{H}^1(\arc_{x_0,t^*}(v,v^*))&\leq \mathcal{H}^1(\arc_{x_0,t^*}(v,L_0^0))+\mathcal{H}^1(\arc_{x_0,t^*}(L_0^0,L_s))+\mathcal{H}^1(\arc_{x_0,t^*}(L_s,v^*)) \\
    \mathcal{H}^1(\arc_{x_0,t^*}(v,v^*))&\geq \mathcal{H}^1(\arc_{x_0,t^*}(L_0^0,L_s))-\mathcal{H}^1(\arc_{x_0,t^*}(L_s,v^*))-\mathcal{H}^1(\arc_{x_0,t^*}(v,L_0^0)).
 \end{split}
 \end{align}

Observe that $\mathcal{H}^1(\arc_{x_0,t^*}(L_0^0,L_s))=\theta_{s,0}t^*$ where $\theta_{s,0}=\measuredangle(L_s,L_0^0)$. Then from \eqref{eq: s bound}, 
\begin{equation*}
    \sin(\theta_{s,0})=\frac{d(u,L_0^0)}{s}>\frac{C_0s_0^{\alpha/16+1}}{s}>C_0s_0^{\alpha/16},
\end{equation*} 
and moreover, for $s_0$ small enough, $\theta_{s,0}\in (C_0s_0^{\alpha/16}, \frac{\pi}{2}]$. Thus,
 \begin{equation}\label{eq: angle between L00 and L_S}
     \mathcal{H}^1(\arc_{x_0,t^*}(L_0^0,L_s))\in (C_0s_0^{\alpha/16}t^*, \frac{\pi}{2}t^*].
 \end{equation}
We have $\mathcal{H}^1(\arc_{x_0,t^*}(L_s,v^*))=\theta_{v^*}t^*$, where $\theta_{v^*}=\measuredangle ux_0v^*$. Then \eqref{eq: t bound}, \eqref{eq: t^* bound}, and \eqref{eq: s bound} yield
  \begin{equation*}
      \sin(\theta_{v^*})=\frac{d(v^*,L_s)}{t^*}\leq \frac{t}{t^*}\leq C_4s_0^{3\alpha/16},
  \end{equation*} 
  and for $s_0$ small, we have $\theta_{v^*}\in [0,C_4s_0^{3\alpha/16})$, where again $C_4$ is possibly different from above but depends only on $C$ and $C_0$. Thus, 
 \begin{equation}\label{eq: angle between v* and L_s}
     \mathcal{H}^1(\arc_{x_0,t^*}(L_s,v^*))\in [0,C_4s_0^{3\alpha/16}t^*).
 \end{equation}
 Lastly, $\mathcal{H}^1(\arc_{x_0,t^*}(v,L_0^0))=\theta_{v}t^*$, where $\theta_{v}=\measuredangle vx_0q$. From choice of $v$, \eqref{eq: t^* bound}, and \eqref{eq: s bound}, we obtain
 \begin{equation*}
     \sin(\theta_v)=\frac{d(v,L_0^0)}{t^*}<\frac{Cs_0^{\alpha/2+1}}{t^*}\leq C_5 s_0^{7\alpha/16},
 \end{equation*}
where $C_5$ depends only on $C$ and $C_0$. Again, for  $s_0$ small enough we have
 \begin{equation}\label{eq: angle between v and L_0^0}
      \mathcal{H}^1(\arc_{x_0,t^*}(v,L_0^0))\in [0, C_5s_0^{7\alpha/16}t^*),
 \end{equation}
 where $C_5$ is different from above but still only depends on $C$ and $C_0$.
 Thus, \eqref{eq: angle between L00 and L_S}, \eqref{eq: angle between v* and L_s}, \eqref{eq: angle between v and L_0^0}, and \eqref{e: triangle inequality final estimate}, gives 
 \begin{equation}\label{e: fourth s0 in lemma 2}
      \mathcal{H}^1(\arc_{x_0,t^*}(v,v^*))<\left(\frac{\pi}{2}+C_4s_0^{3\alpha/16}+C_5s_0^{7\alpha/16}\right)t^*<\left(\frac{\pi}{2}+C_6s_0^{3\alpha/16}\right)t^*, 
 \end{equation}
 again taking $s_0$ small enough, where $C_6$ depends only on $C,C_0$. Recall from \eqref{eq: t^* bound} and \eqref{eq: s bound} that $t^*$ is small, and thus for $s_0$ small enough, 
 \begin{equation*}
     \left(\frac{\pi}{2}+C_6s_0^{3\alpha/16}\right)<\pi-C_0(t^*)^{\alpha}, 
 \end{equation*}
 i.e. the upper bound in \eqref{eq: final contradiction} holds. Similarly, \eqref{eq: angle between L00 and L_S}, \eqref{eq: angle between v* and L_s}, \eqref{eq: angle between v and L_0^0}, and \eqref{e: triangle inequality final estimate}, with $s_0$ small enough depending on $C_0,C$, and $\alpha$, 
 \begin{equation*}
     \mathcal{H}^1(\arc_{x_0,t^*}(v,v^*))>\left(C_0s_0^{\alpha/16} -4s_0^{3\alpha/16}-2C_5s_0^{7\alpha/16}\right)t^*>\frac{1}{2}C_0s_0^{\alpha/16}t^*.
 \end{equation*}
Recalling again that $t^*$ is small from \eqref{eq: t^* bound} and \eqref{eq: s bound},
 \begin{equation*}
     2C_0(t^*)^{\alpha}<\frac{1}{2}C_0s_0^{\alpha/16},
 \end{equation*}
 and thus \eqref{eq: final contradiction} holds. Choosing $\rho_1$ small enough so that \eqref{e: first s0 of lemma 2}, \eqref{eq: t bound}, \eqref{e: second s0 for lemma 2}, \eqref{eq: arc z to Ls}, \eqref{eq: theta_is close to pi/2}, \eqref{e: third s0 lemma 2}, \eqref{eq: angle between L00 and L_S}, \eqref{eq: angle between v and L_0^0}, and \eqref{e: fourth s0 in lemma 2} hold, completes the proof.
     
\end{proof}

\bibliographystyle{alpha}
\bibdata{references}
\bibliography{references}

\end{document}